\newtheorem{theorem}{Theorem}[section]
\newtheorem{proposition}[theorem]{Proposition}
\newtheorem{lemma}[theorem]{Lemma}
\newtheorem{corollary}[theorem]{Corollary}
\newtheorem{definition}[theorem]{Definition}
\theoremstyle{remark}
\newtheorem{remark}[theorem]{Remark}
\numberwithin{equation}{section}
\begin{document}

\title[Affine Pieri Rule and Fusion Rings]
{Affine Pieri Rule for Periodic Macdonald Spherical Functions and Fusion Rings}

\author{J.F.  van Diejen}

\address{
Instituto de Matem\'aticas, Universidad de Talca,
Casilla 747, Talca, Chile}

\email{diejen@inst-mat.utalca.cl}

\author{E. Emsiz}

\address{Delft Institute of Applied Mathematics,  Delft University of Technology, P.O. Box 5031, 2600 GA Delft, the Netherlands}
\email{e.emsiz@tudelft.nl}

\author{I.N. Zurri\'an}

\address{FAMAF-CIEM, Universidad Nacional de C\'ordoba, C\'ordoba 5000, Argentina}

\email{zurrian@famaf.unc.edu.ar}

\subjclass[2010]{Primary: 05E05; Secondary: 17B67, 33D52, 33D80, 81T40}
\keywords{Macdonald spherical functions, affine Hecke algebras,  affine Lie algebras, Wess-Zumino-Witten fusion rings}

\thanks{Work was supported in part by the {\em Fondo Nacional de Desarrollo
Cient\'{\i}fico y Tecnol\'ogico (FONDECYT)} Grants \# 1170179 and \# 1210015}

\date{August, 2021}

\begin{abstract} 

Let $\hat{\mathfrak{g}}$ be an untwisted affine Lie algebra or the twisted counterpart thereof (which excludes the affine Lie algebras of type $\widehat{BC}_n=A^{(2)}_{2n}$).
We present an affine Pieri rule for a basis of periodic Macdonald spherical functions associated
with $\hat{\mathfrak{g}}$.
In type $\hat{A}_{n-1}=A^{(1)}_{n-1}$ the formula in question reproduces an affine Pieri rule for cylindric Hall-Littlewood polynomials due to Korff, which at $t=0$ specializes in turn to a well-known Pieri formula in the fusion ring of genus zero {$\widehat{\mathfrak{sl}}(n)_c$}-Wess-Zumino-Witten conformal field theories.
\end{abstract}

\maketitle


\section{Introduction}\label{sec1}
The  Hall polynomials form a generalization of the Littlewood-Richardson coefficients that provide the structure constants of the classical Hall algebra in the basis of  Hall-Littlewood polynomials; these structure constants (which are polynomial in the Hall-Littlewood parameter) are known to enjoy a very intricate combinatorics  \cite[Chapters II, III]{mac:symmetric}. Indeed, the Hall algebra and its generalizations in terms of quivers turn out to encode
a host of combinatorial, algebra-geometric, and representation-theoretic data \cite{bar-war:hall-littlewood,mac:symmetric,sch:lectures,whe-zin:hall}. Recently, Korff introduced an affine analog of  the
Hall polynomials; these arise as  structure constants of a $t$-deformation of the fusion ring (a.k.a. Verlinde algebra) for {$\widehat{\mathfrak{sl}}(n)_c$}-Wess-Zumino-Witten conformal field theories with respect to a natural basis built from cylindric Hall-Littlewood polynomials \cite{kor:cylindric}.
While to date the precise geometric and/or representation-theoretic interpretation of this  $t$-deformed fusion ring has yet to be disclosed, indications of an intimate relation with the deformed
Verlinde algebras in \cite{tel:k-theory,tel-woo:index} have been noticed \cite{guk-pei:equivariant,kor:cylindric,oku-yos:gauged}.

At $t=0$ the Hall-Littlewood polynomials become Schur polynomials. The corresponding  Littlewood-Richardson coefficients
\cite[Chapter I.9]{mac:symmetric} and their affine counterparts, which arise
as fusion coefficients for {$\widehat{\mathfrak{sl}}(n)_c$}-Wess-Zumino-Witten conformal field theories \cite[Chapter 16]{dif-mat-sen:conformal}, have received massive attention across the mathematics literature because of their rich combinatorics and profound applications in representation theory and Schubert calculus, cf. e.g.  \cite{ful:young} and \cite{goo-wen:littlewood,gep:fusion,kor-stro:slnk,mor-sch:combinatorial} as well as further references therein. Korff's $t$-deformation  is different from the $q$-deformed fusion ring in \cite{fod-lec-oka-thi:ribbon}, which recovers
the  {$\widehat{\mathfrak{sl}}(n)_c$}-Wess-Zumino-Witten fusion ring at the value $q=1$.
 Of special interest is in this connection  the well-known fact that
the closely related  {$\widehat{\mathfrak{gl}}(n)_c$}-fusion ring amounts to a $q=1$ degeneration of the small quantum cohomology ring
of the  Grassmannian of  $n$-dimensional linear subspaces in $\mathbb{C}^{n+c}$
\cite{agn:quantum,ber-cio-ful:quantum}.  The structure coefficients of this small quantum cohomology ring in the basis of Schubert classes,
the genus zero 3-point Gromov-Witten invariants,  can  be computed as quantum counterparts of the Littlewood-Richardson coefficients for Schur polynomials \cite{ber:quantum,gep:fusion,int:fusion,rie:quantum,sie-tan:quantum,tam:gromov,vaf:topological,wit:verlinde}.
Various other combinatorial constructions related to the computation  of genus zero 3-point Gromov-Witten invariants have been considered in the literature, e.g. via the structure constants of algebras of symmetric polynomials
in bases of
cylindric Schur polynomials \cite{pos:affine,mac:cylindric}, in bases of
$k$-Schur polynomials
 \cite{lap-mor:quantum,lam-lap-mor-shi:affine,lam-lap-mor-sch-shi-zab:k-schur},
or in bases of noncommutative Schur polynomials in variables from a plactic algebra
\cite{kor-stro:slnk}, respectively.

If at least one of the two factors in the Littlewood-Richardson product consists of
a Hall-Littlewood polynomial attached to a partition with only a single column, then the explicit form of
the pertinent Hall polynomials is given by the Pieri formula
\cite[Chapter III.3]{mac:symmetric}. The affine analog of this Pieri formula  for cylindric Hall-Littlewood polynomials can be found in
\cite[Corollary 7.4]{kor:cylindric}.
The purpose of the present work is to generalize the affine Pieri formula in question from
 {$\widehat{\mathfrak{sl}}(n)_c$ to the case of an arbitrary
  affine Lie algebra $\hat{\mathfrak{g}}$ \cite{kac:infinite}, excluding those of type    $\widehat{BC}_n=A^{(2)}_{2n}$. In other words,
   $\hat{\mathfrak{g}}$ is assumed to be untwisted  or to be the twisted counterpart of an untwisted affine Lie algebra.

Let us recall at this point that from the perspective of Lie algebras the Hall-Littlewood polynomials in $n$ variables are associated with $\mathfrak{sl}(n)$. The corresponding
generalization of these polynomials
to simple Lie algebras of arbitrary  type is given by the Macdonald spherical functions
 \cite{mac:orthogonal,nel-ram:kostka,par:buildings,sch:galleries}, which were constructed originally
 by Macdonald  as spherical functions on $p$-adic symmetric spaces \cite{mac:spherical}.
With the aid of suitable representations of the affine Hecke algebra, the Pieri formula for the Hall-Littlewood polynomials was generalized to a Pieri formula for Macdonald spherical functions of arbitrary simple Lie type in
\cite{die-ems:unitary}. The key to achieve  an analogous generalization of the affine Pieri formula in 
 \cite{kor:cylindric} is to connect with the work in \cite{die:diagonalization}. To this end, we will
 detail briefly how  affine Pieri formulas arise in the context of  \cite{die:diagonalization},  while also
emphasizing in which sense these differ from the usual Pieri formulas for the Hall-Littlewood polynomials in \cite{mac:symmetric}.

Associated with the standard unit basis $e_1,\ldots ,e_n$ for $\mathbb{Z}^n\subset\mathbb{R}^n\subset \mathbb{C}^n$, let us denote $\bar{e}_j=e_j-\frac{1}{n}(e_1+\cdots+e_n)$ ($j=1,\ldots,n$) and $\omega_r=\bar{e}_1+\cdots +\bar{e}_r$ ($r=1,\ldots ,n-1$). 
For $\lambda\in \Lambda^{(n)}=\{ m_1\omega_1+\cdots +m_{n-1}\omega_{n-1}\mid m_1,\ldots,m_{n-1}\in\mathbb{Z}_{\geq0}\}$
the  $\mathfrak{sl}(n)$ Hall-Littlewood polynomial $R_\lambda (x;t)$ with   variable $x=(x_1,\ldots ,x_n)$  and parameter $t$ is defined by the explicit formula
\begin{align*}\label{HL:a}
R_{\lambda} (x ;t) =  
 \sum_ {\sigma\in S_{n}}   C ( x_{\sigma_1},\ldots ,  x_{\sigma_{n}};t)
 x_{\sigma_1}^{\lambda_1}\cdots  x_{\sigma_{n}}^{ \lambda_{n}}  ,
\end{align*}
where
\begin{equation*}\label{Cp:a}
C (x_1,\ldots ,x_n;t)= \prod_{1\leq j<k \leq n} \frac{1-t x_j^{-1}x_k}{1-x_j^{-1}x_k} ,
\end{equation*}
and the summation is meant over all permutations
$\sigma= { \bigl( \begin{smallmatrix}1& 2& \cdots & n \\
 \sigma_1&\sigma_2&\cdots &\sigma_n
 \end{smallmatrix}\bigr)}$  of the symmetric group $S_{n}$. 
When $\mu=\omega_r$, the corresponding $t$-deformed Littlewood-Richardson coefficients
\begin{equation*}
R_\lambda R_\mu =\sum_{\nu\in\Lambda^{(n)}}  \text{c}^{\nu}_{\lambda ,\mu}(t) R_\nu
\qquad (\lambda,\mu\in  \Lambda^{(n)})
\end{equation*}
are given explicitly by the Pieri rule \cite[Chapter III.3]{mac:symmetric}
\begin{equation}\label{pieri}
R_\lambda R_{\omega_r} =c_{\omega_r}(t)
\sum_{\substack{J \subseteq \{ 1,\ldots ,n\} ,\, |J|=r \\ \lambda+\bar{e}_J\in \Lambda^{(n)}}}    R_{\lambda +\bar{e}_J}    \prod_{\substack{1\leq j < k\leq n \\ j\in J, \, k\not\in J \\ \lambda_j=\lambda_k} }  \frac{1-t^{k-j+1}}{1-t^{k-j}}.
\end{equation}
Here $\bar{e}_J=\sum_{j\in J} \bar{e}_j$, $|J|$ denotes the cardinality of $J$, and 
$c_{\omega_r}(t)= S_r(t)S_{n-r}(t)$ with
$S_n(t) =\prod_{1\leq j<k\leq n} \frac{1-t^{k-j+1}}{1-t^{k-j}}$.

Given a positive integral level $c$,
an affine analog of the Pieri formula \eqref{pieri} valid for $\lambda\in
\Lambda^{(n,c)}= \{ m_1\omega_1+\cdots +m_{n-1}\omega_{n-1} \in\Lambda^{(n)}\mid m_1+\cdots +m_{n-1} \leq c \} $
follows from \cite[Theorem 5.1]{die:diagonalization}:
\begin{align}\label{pieri-affine}
&R_\lambda^{(c)} R_{\omega_r}^{(c)} =\\c_{\omega_r}(t)
&\sum_{\substack{J \subseteq \{ 1,\ldots ,n\} ,\, |J|=r \\ \lambda+\bar{e}_J\in \Lambda^{(n,c)} }}    R_{\lambda +\bar{e}_J}^{(c)}   \prod_{\substack{1\leq j < k\leq n \\ j\in J, \, k\not\in J \\ \lambda_j=\lambda_k} } \frac{1-t^{k-j+1}}{1-t^{k-j}}
 \prod_{\substack{1\leq j < k\leq n \\ j\not\in J, \, k\in J \\ \lambda_j=\lambda_k+c} } 
  \frac{1-t^{n+1-k+j}}{1-t^{n-k+j}} . \nonumber
\end{align}
Here $R_\lambda^{(c)} :X^{(n,c)}\to \mathbb{C}$ refers to the Hall-Littlewood polynomial $R_\lambda$ viewed as a function
on a discrete set  $X^{(n,c)}=X^{(n,c)}(t) \subset \mathbb{T}^n=\{  (x_1,\ldots ,x_n) \in \mathbb{C}^n \mid |x_j|=1, j=1,\ldots n\}$. This set consists of points $x_\mu(t)$, $\mu\in \Lambda^{(n,c)}$ that depend analytically on the Hall-Littlewood parameter $t\in (-1,1)$. Specifically,
for $\mu\in \Lambda^{(n,c)}$ and $t\in (-1,1)$ the point
$x_\mu (t)$ is of the form $(e^{i\xi_1},\ldots ,e^{i\xi_n})$ with the vector of angle coordinates $\xi=\xi_\mu (t)$ being defined as the unique global minimum
of the radially unbounded strictly convex Morse function
$\mathcal{V}^{(n,c)}_{\mu }:\mathbb{R}^n\to \mathbb{R}$
\begin{equation}\label{fp}
\mathcal{V}^{(n,c)}_{ \mu }(\xi ) = \sum_{1\le j < k \le n }    \int_0^{\xi_j-\xi_k} v(\text{x})\text{d}\text{x}
 + \sum_{1\leq j\leq n} \left(
{\textstyle \frac{c}{2} } \xi_j^2-2\pi ( \rho_j+\mu_j)\xi_j 
 \right) ,
\end{equation}
where
$\rho=\omega_1+\cdots+\omega_{n-1}$
and
$
v(\text{x}) = 
\int_0^{\text{x}} \frac{1-t^2}{1-2t\cos (\text{y}) +t^2}\text{d}\text{y}$. This Morse function can be loosely thought of as an analog of the fusion potential,
cf. \cite{gep:fusion,bou-rid:presentations}. It is known  that the Hall-Littlewood polynomials $R^{(c)}_\lambda$, 
$\lambda\in \Lambda^{(n,c)} $ form a linear  basis for the algebra  of functions $f: X^{(n,c)}\to\mathbb{C}$  (cf. \cite[Theorem 5.2]{die:diagonalization}), which gives rise to the following affine analog of the Littlewood-Richardson coefficients for the Hall-Littlewood polynomials:
\begin{equation}\label{H-affine}
R_\lambda^{(c)} R_\mu^{(c)} =\sum_{\nu\in \Lambda^{(n,c)}}  \text{c}^{\nu, (c)}_{\lambda ,\mu}(t) R_\nu^{(c)} \qquad (\lambda,\mu\in  \Lambda^{(n,c)}).
\end{equation}
For $\mu=\omega_r$, the explicit form of $
\text{c}^{\nu, (c)}_{\lambda ,\mu}(t) $ is given by the affine Pieri rule in Eq. \eqref{pieri-affine}. 

The structure constants $ \text{c}^{\nu, (c)}_{\lambda ,\mu}(t) $ \eqref{H-affine} constitute a $t$-deformation of the fusion coefficients
for the genus zero  {$\widehat{\mathfrak{sl}}(n)_c$}-Wess-Zumino-Witten conformal field theories, which are recovered at $t=0$.
Indeed, $s_\lambda (x)=R_\lambda (x;0)$ is given by the $\mathfrak{sl}(n)$ Schur character
and the vector of coordinate angles becomes $\xi_\mu (0) = \frac{2\pi}{n+c} (\rho +\mu)$.  The coordinates of the
points in $X^{(n,c)}=X^{(n,c)}(0)$ are thus given by explicit roots of unity:
$x_\mu(0)=e^{ \frac{2\pi i}{n+c} (\rho +\mu)}=(e^{ \frac{2\pi i}{n+c} (\rho_1 +\mu_1)},\ldots ,e^{ \frac{2\pi i}{n+c} (\rho_n +\mu_n)})$, $\mu\in\Lambda^{(n,c)}$.
The basis functions $s_\lambda^{(c)}:X^{(n,c)}\to\mathbb{C}$, $\lambda\in\Lambda^{(n,c)}$, given by\begin{equation*}
s_\lambda^{(c)} (x_\mu) = s_\lambda (e^{ \frac{2\pi i}{n+c} (\rho +\mu)})\qquad (\lambda,\mu\in\Lambda^{(n,c)}),
\end{equation*}
and the associated structure constants
\begin{equation*}
s_\lambda^{(c)} s_\mu^{(c)} =\sum_{\nu\in \Lambda^{(n,c)}}  \text{c}^{\nu, (c)}_{\lambda ,\mu} s_\nu^{(c)} \qquad (\lambda,\mu\in  \Lambda^{(n,c)})
\end{equation*}
for the algebra of functions $f:X^{(n,c)}\to \mathbb{C}$ in this basis, provide a well-studied combinatorial model for
the genus zero  {$\widehat{\mathfrak{sl}}(n)_c$}-Wess-Zumino-Witten fusion ring
 \cite{dif-mat-sen:conformal,gep:fusion,goo-wen:littlewood,kac:infinite,kir:inner,kor-stro:slnk}.
In particular, the corresponding $t=0$ specialization of the affine Pieri formula \eqref{pieri-affine}:
\begin{equation}\label{pieri-affine-schur}
s_\lambda^{(c)} s_{\omega_r}^{(c)} =
\sum_{\substack{J \subseteq \{ 1,\ldots ,n\} ,\, |J|=r \\ \lambda+\bar{e}_J\in \Lambda^{(n,c)}}}    s_{\lambda +\bar{e}_J}^{(c)}   
\end{equation}
is well-known in this context, cf. e.g. \cite[Equation (3.2)]{and-str:fusion},
 \cite[Equations (16.112), (16.121)]{dif-mat-sen:conformal}, \cite[Equation (3.6)]{gep:fusion}, \cite[Proposition 2.6]{goo-wen:littlewood}, and \cite[Theorem 6.2]{sal:fusion}.

It is important to emphasize at this point that
the deformation of the  genus zero  {$\widehat{\mathfrak{sl}}(n)_c$}-Wess-Zumino-Witten fusion ring stemming from Eq. \eqref{H-affine}
is not constructed in exactly the same manner as in \cite[Section 7]{kor:cylindric}. In a nut-shell: both deformations are related
via level-rank duality \cite{dif-mat-sen:conformal,goo-wen:littlewood,nak-tsu:level-rank}, which has not been established for $t\in (-1,1)\setminus \{ 0\}$ and thus  a priori gives rise to two  dual choices for the Hall-Littlewood deformation of the fusion ring.

In order to generalize the affine Pieri formula \eqref{pieri-affine} from  {$\widehat{\mathfrak{sl}}(n)_c$} to other affine Lie algebras,
we present an affine counterpart of the Pieri formula for Macdonald spherical functions of arbitrary simple Lie type  from  \cite{die-ems:unitary}, which stems from the implementation of periodic boundary conditions.
The underlying representations of the affine Hecke algebra that lead to this affine Pieri formula
are inspired by previous constructions for the graded affine Hecke algebra that were
developed in the context of the study of quantum integrable particle models, cf. \cite{gut-sut:completely,ems-opd-sto:periodic} and references therein.
From this perspective, a partial construction for twisted affine Lie algebras can be found in \cite{die-ems:discrete}; here we
apply these techniques to present a combinatorial model to compute the structure constants of deformed
 genus zero Wess-Zumino-Witten fusion rings for both twisted and untwisted affine Lie algebras (excluding those of type 
 $\widehat{BC}_n=A^{(2)}_{2n}$, cf. \cite{die:deformation}).  In line with was remarked at the end of the first paragraph for  {$\widehat{\mathfrak{sl}}(n)_c$}, we
 expect that these deformed fusion rings are isomorphic to
  deformed
Verlinde algebras from \cite{tel:k-theory,tel-woo:index}; for  {$\widehat{\mathfrak{sl}}(2)_c$} this isomorphism is manifest from the explicit
construction in  \cite[Appendices A and B]{and-guk-pei:verlinde}.

The material is organized as follows.
Section \ref{sec2} presents our deformation of the genus zero Wess-Zumino-Witten fusion ring, which is built
from a basis of periodic Macdonald spherical functions. The main result is an affine Pieri rule that permits to compute the structure constants
for the multiplication in the periodic Macdonald spherical basis  by  basis elements attached to weights that are either minuscule or quasi-minuscule. After setting up some further notational preliminaries concerning the affine Weyl group in Section \ref{sec3}, the pertinent structure constants are exhibited
in Section \ref{sec4}. When the deformation parameter vanishes, one finds a corresponding Pieri formula and structure constants for the genus zero Wess-Zumino-Witten fusion ring itself. The bulk of the paper is devoted to the
proof of our Pieri rule via a suitable  representation of the Hecke algebra of the affine Weyl group. Specifically, the affine Hecke algebra is first employed in Section \ref{sec5} to construct an affine intertwining operator acting in the space of complex functions over the weight lattice. 
Via a  standard construction involving the idempotent associated with the trivial representation of the Hecke algebra of the finite Weyl group, the
periodic Macdonald spherical functions arise in Section \ref{sec6} upon acting with the affine intertwining operator.
In Section  \ref{sec7} it is shown that the periodic Macdonald spherical functions give rise to a basis for a finite-dimensional algebra
of functions supported on critical points of a `fusion potential' of the type in Eq. \eqref{fp}.
We apply the affine intertwining operator so as to derive a family of difference operators diagonalized by the basis of periodic Macdonald spherical functions. The action of these difference operators permits us to compute the corresponding structure constants associated with this basis. 
In Section \ref{sec8} the computation in question is carried out explicitly for the particular case of the Pieri formula, and
Section \ref{sec9} outlines how to recover the  structure constants more generally from the action of the difference operators.

\section{Affine Pieri Rule}\label{sec2}

\subsection{Macdonald spherical functions}
Let $V$ be a real finite-dimensional Euclidean vector space with inner product $\langle \cdot ,\cdot \rangle$ spanned by an irreducible reduced crystallographic root system $R_0$.
We write $Q$, $P$, and $W_0$, for the root lattice, the weight lattice, and the Weyl group associated with $R_0$. The semigroup of the root lattice generated by a (fixed) choice of positive roots $R_0^+$
is denoted by $Q^+$ whereas  $P^+$  stands for the corresponding cone of dominant weights (see e.g. \cite{bou:groupes,hum:reflection} for more details concerning root systems). 

The dual root system $R_0^\vee:=\{\alpha^\vee\mid \alpha\in R_0\}$ and its
positive subsystem $R_0^{\vee ,+}$ are obtained from $R_0$ and $R_0^+$ by applying the involution
\begin{equation}\label{check-inv}
x\mapsto x^\vee:=2x/\langle x ,x \rangle\qquad (x\in V\setminus\{0\} ).
\end{equation}

\begin{definition}
For $\lambda\in P^+$, the Macdonald spherical function $M_\lambda: V \to \mathbb{C}$  is the $W_0$-invariant trigonometric polynomial given explicitly by
\begin{equation}\label{msf-decomposition}
  M_\lambda(\xi )=\sum_{v\in W_0} C(v\xi)e^{i\langle v\xi,\lambda\rangle}
 \end{equation}
with
\begin{equation}\label{mcfun} 
 C({{\xi}}):= 
 \prod_{\alpha\in R_0^+} 
   \frac{ 1-t_{\alpha} e^{-i\langle \xi,\alpha\rangle}}  { 1-e^{-i\langle \xi,\alpha\rangle}}.
\end{equation}
Here 
$t: R_0 \longrightarrow \mathbb C$ is  a root multiplicity function such that  $t_{w\alpha}=t_{\alpha}$ for every $w\in W_0$ and $\alpha\in R_0$.
\end{definition}

 For our purposes the range of the root multiplicity function will be restricted such that $t:R_0\to (-1,1)\setminus \{0\}$.

\subsection{Basis of periodic Macdonald spherical functions}

Let  $\varphi$  and   $\vartheta$  denote the highest root and the  highest short root of $R_0^+$, respectively.
We fix an admissible pair $(R_0,\hat{R_0})$ with $\hat{R_0}$ being equal either to $R_0^\vee$ or to $u_\varphi R_0$, where $u_\varphi=\tfrac{2}{\langle \varphi ,\varphi \rangle}$, and with the positive system $\hat R^+_0$ obtained from $R_0^+$.  In particular,  for simply-laced $R_0$ we have that $\hat R_0=R_0^\vee$.
For $\alpha\in R_0$, let $\hat{\alpha}:=\alpha^\vee$ if $\hat{R_0}=R_0^\vee$ and let $\hat{\alpha}:=u_\varphi \alpha$ if $\hat{R_0}=u_\varphi R_0$.
Then  $\alpha^\vee = m_\alpha \hat\alpha$ with
$m_\alpha=2\langle\alpha ,\hat \alpha\rangle^{-1}$, i.e.
$m_\alpha=1$ if $\hat R_0=R_0^\vee$ and $m_\alpha= \tfrac{\langle\varphi,\varphi\rangle}{\langle\alpha,\alpha\rangle}$ if $\hat R_0=u_\varphi R_0$.
It follows that $\{m_\alpha\}_{\alpha\in R_0}=\{1,m_\vartheta\}$.

We denote the highest short root of $\hat R_0^{\vee,+}$ by $-\alpha_0=\vartheta(\hat R_0^\vee)$, so in particular $\alpha_0=-\vartheta$ if $\hat R_0= R_0^\vee$ and $\alpha_0=-\varphi$ if $\hat R_0= u_\varphi R_0$.
We also write $\hat{Q}$, $\hat{Q}^\vee$,  $\hat{P}$ and $\hat{P}^\vee$ for  the root lattice, the co-root lattice, the weight lattice and the co-weight lattice of $\hat{R_0}$, respectively.
In this setup it will turn out natural to extend the domain of the root multiplicity function in a straightforward manner: $t:R_0\cup R_0^\vee \cup \hat R_0\to  (-1,1)\setminus \{0\}$ such that $t_{\hat\alpha}=t_{\alpha^\vee}=t_\alpha$ for all $\alpha\in R_0$.

Given a fixed positive integer  $c>1$, we consider two affine alcoves in $P$ and $\hat P$:
\begin{align}\label{hpc}
{P_c}:=\{ \lambda \in P\mid 0\leq \langle \lambda,\beta\rangle\leq c,\ \forall\beta\in {\hat R_0^+}\},
\\
\label{hpcc}
\hat P_c:={\{ \mu\in \hat P\mid 0\leq \langle \mu ,\alpha\rangle \leq c,\, \forall \alpha\in R_0^+\}} ,
\end{align}
and an associated set of nodes $\mathcal P_c:= \left\{\xi_\mu \mid \mu\in  \hat P_c\right\}$.
Here  $\xi_\mu:=\xi_\mu(t)$ ($\mu \in {\hat P}$) 
is defined as the unique global minimum of a radially unbounded strictly convex Morse function 
$\mathcal{V}_\mu:V\to \mathbb{R}$ of the form
\begin{equation}\label{mf1}
\mathcal{V}_\mu(\xi)=\frac{c}{2}\langle \xi,\xi \rangle-2\pi \langle \hat\rho+\mu,\xi\rangle +\sum_{\alpha\in R^+_0}\frac{2}{\langle\alpha,\hat\alpha^\vee\rangle}\int_0^{\langle \xi,\alpha\rangle}v_\alpha(\text{x})\text{d} \text{x},
\end{equation}
where $\hat\rho :=\rho(\hat R_0)=\frac{1}{2}\sum_{\alpha\in {\hat R_0^+}} \alpha$ 
and
$
v_\alpha(\text{x}):=(1-t_{\alpha}^2)\int_0^\text{x} \frac{\text{d}\text{y}}{1-2t_{\alpha} \cos(\text{y})+t_{\alpha}^2}. \nonumber
$

Let $\mathcal{C}({\mathcal P_c})$ denote the algebra of functions $f:\mathcal P_c\to \mathbb{C}$.
For any $\lambda\in P_c$, the periodic Macdonald spherical function $M_{\lambda}^{(c)}\in\mathcal{C}(\mathcal P_c)$ 
is given by the restriction of $M_{\lambda}$ to the nodes $\mathcal P_c$.

\begin{theorem}[Basis]\label{R1}
The periodic Macdonald spherical functions $M_{\lambda}^{(c)}$, $\lambda\in {P_c}$, form a basis of $\mathcal{C}({\mathcal P_c})$.
\end{theorem}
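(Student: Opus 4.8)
The plan is to read the assertion as the nonsingularity of a square matrix and to derive that nonsingularity from a discrete orthogonality relation. Two preliminary reductions are needed. Since $\xi_\mu$ is the unique critical point of the strictly convex $\mathcal{V}_\mu$, the stationarity condition $\nabla\mathcal{V}_\mu(\xi_\mu)=0$ reads
\begin{equation*}
c\,\xi_\mu+\sum_{\alpha\in R_0^+}\frac{2}{\langle\alpha,\hat{\alpha}^\vee\rangle}\,v_\alpha(\langle\xi_\mu,\alpha\rangle)\,\alpha=2\pi(\hat{\rho}+\mu),
\end{equation*}
which exhibits $2\pi(\hat{\rho}+\mu)$ as an explicit function of $\xi_\mu$; hence $\xi_\mu$ determines $\mu$, the assignment $\mu\mapsto\xi_\mu$ is injective, and $\dim\mathcal{C}(\mathcal{P}_c)=|\mathcal{P}_c|=|\hat{P}_c|$. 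It remains to match this with the number of functions, i.e.\ to know $|P_c|=|\hat{P}_c|$; this is a lattice-point bijection between the two level-$c$ alcoves (immediate when $R_0$ is simply laced, since then $\hat{R}_0=R_0^\vee$ forces $P_c=\hat{P}_c$), and in general it can also be read off from the spectral picture below. Granting this, $\{M_\lambda^{(c)}\}_{\lambda\in P_c}$ is a basis of $\mathcal{C}(\mathcal{P}_c)$ if and only if the square matrix $\mathbf{M}=[\,M_\lambda(\xi_\mu)\,]_{\lambda\in P_c,\,\mu\in\hat{P}_c}$ is nonsingular.

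To prove nonsingularity I would equip $\mathcal{C}(\mathcal{P}_c)$ with a Hermitian pairing
\begin{equation*}
\langle f,g\rangle_\Delta=\sum_{\mu\in\hat{P}_c}\Delta(\xi_\mu)\,f(\xi_\mu)\,\overline{g(\xi_\mu)},
\end{equation*}
with an explicit positive weight $\Delta$ assembled from the function $C$ in \eqref{mcfun} and the Hessian of $\mathcal{V}_\mu$ at its minimum, and show that the periodic Macdonald spherical functions are pairwise orthogonal for $\langle\cdot,\cdot\rangle_\Delta$. Orthogonality forces linear independence, while $\Delta>0$ gives $\langle M_\lambda^{(c)},M_\lambda^{(c)}\rangle_\Delta>0$ so that no $M_\lambda^{(c)}$ vanishes; combined with $|P_c|=|\hat{P}_c|$ this promotes independence to the basis property.

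The orthogonality is the heart of the matter, and I would obtain it spectrally: produce a commuting family of difference operators on $\mathcal{C}(\mathcal{P}_c)$ that are self-adjoint for $\langle\cdot,\cdot\rangle_\Delta$, are simultaneously diagonalized by the $M_\lambda^{(c)}$ with a joint eigenvalue $E_\lambda$, and whose joint spectrum separates the labels $\lambda\in P_c$. Self-adjointness together with a simple joint spectrum then yields an orthogonal eigenbasis, and identifying $\{M_\lambda^{(c)}\}_{\lambda\in P_c}$ with the full set of joint eigenvectors simultaneously delivers the count $|P_c|=|\hat{P}_c|$ and the basis property. The eigenfunction relation is the discretized Macdonald/Pieri identity for $M_\lambda$; the decisive and hardest ingredient is self-adjointness, where summation by parts over the nodes produces boundary contributions that are cancelled precisely by the stationarity relation $\nabla\mathcal{V}_\mu(\xi_\mu)=0$ — so the periodicity encoded in the choice of nodes is exactly what makes the operators symmetric. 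Constructing these operators and verifying the vanishing of boundary terms (via the affine Hecke algebra) together with the spectral separation over $P_c$ is the main obstacle; the type $\widehat{A}_{n-1}$ prototype is \cite[Theorem 5.2]{die:diagonalization}. As a reassuring check, at $t=0$ the nodes $\xi_\mu(0)$ are roots of unity, the entries $M_\lambda(\xi_\mu)$ collapse to Weyl characters evaluated there, and $\mathbf{M}$ becomes the Verlinde $S$-matrix, whose nonsingularity is classical; the orthogonality argument is what carries this through to every $t\in(-1,1)$.
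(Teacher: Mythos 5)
Your overall reduction (nonsingularity of the square matrix $[M_\lambda^{(c)}(\xi_\mu)]_{\lambda\in P_c,\,\mu\in\hat P_c}$, injectivity of $\mu\mapsto\xi_\mu$ from the critical-point equation) agrees with the paper, but the engine of your argument --- a positive weight $\Delta$, self-adjoint commuting operators on $\mathcal{C}(\mathcal P_c)$, and discrete orthogonality of the $M_\lambda^{(c)}$ --- is left unconstructed, and one step of the logic is circular. Concretely: (i) you never exhibit the commuting operators that are diagonalized by the $M_\lambda^{(c)}$ with eigenvalues $E_\lambda$ indexed by $\lambda$, and such operators are not what this theory naturally provides. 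The nodes $\mathcal P_c$ are Bethe-type critical points, not a lattice, so ``difference operators in $\xi$'' have no evident meaning; what the affine Hecke algebra actually yields (Sections \ref{sec5}--\ref{sec7}) are operators in the \emph{weight} variable, $L_\omega=\mathcal{J}L_{\omega;1}\mathcal{J}^{-1}$ acting on $\mathcal{C}(P)^{W}\cong\mathcal{C}(P_c)$, whose eigenvalue equation $L_\omega\Phi_{\xi_\mu}={\mathrm m}_\omega(e^{i\xi_\mu})\Phi_{\xi_\mu}$ comes essentially for free because $\mathcal{J}$ conjugates $L_\omega$ to the free operator $L_{\omega;1}$, which is diagonal on plane waves. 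That is the transpose of the spectral picture you want, and transposing it back requires already knowing the matrix is square and invertible. (ii) Your handling of the count $|P_c|=|\hat P_c|$ is circular outside the simply-laced case: pairwise-orthogonal nonzero functions only give $|P_c|\le\dim\mathcal{C}(\mathcal P_c)=|\hat P_c|$, and upgrading independence to a basis via ``identifying the $M_\lambda^{(c)}$ with the full set of joint eigenvectors'' presupposes a completeness statement that is exactly what Theorem \ref{R1} asserts. The paper settles the count by a short combinatorial argument: $P_c$ and $\hat P_c$ are the lattice simplices cut out by the marks $m_j$ (coefficients of the highest root of $\hat R_0$ in the basis $\alpha_1^\vee,\dots,\alpha_n^\vee$) and $\hat m_j$ (coefficients of $\varphi$ in the basis $\hat\alpha_1^\vee,\dots,\hat\alpha_n^\vee$), and these tuples coincide when $\hat R_0=u_\varphi R_0$ and are permutations of one another when $\hat R_0=R_0^\vee$.

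The comparison also shows your plan is heavier than necessary: no inner product, self-adjointness, or orthogonality is needed at all. In the paper, linear independence of the $\Phi_{\xi_\mu}$, $\mu\in\hat P_c$, follows from pure linear algebra (joint eigenvectors of commuting operators with pairwise distinct joint eigenvalues are independent), where distinctness of the eigenvalues ${\mathrm m}_\omega(e^{i\xi_\mu})$ comes from Lemma \ref{lemma} (the $\xi_\mu$ are distinct points of the open alcove $A$) together with the fact that the ${\mathrm m}_\omega$, $\omega\in P^+$, separate points of $A$, and nonvanishing of each $\Phi_{\xi_\mu}$ comes from the evaluation $\Phi_{\xi_\mu}(0)=W_0(t)>0$ via Macdonald's Poincar\'e-series identity. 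Combined with the count, this makes the matrix $[M_\lambda^{(c)}(\xi_\mu)]$ invertible (Proposition \ref{diagonal:thm}). Your orthogonality route mirrors the type-$A$ prototype you cite and would prove a stronger statement (discrete orthogonality relations), but as written the decisive ingredients --- the operators, the weight $\Delta$, the summation-by-parts cancellation, and the cardinality matching --- are all deferred, so the proposal is a program rather than a proof.
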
 

\begin{remark}
In Sect. \ref{sec6}  we will introduce the $W_0$-invariant affine Macdonald spherical functions $\Phi_\xi\in \mathcal C(P)$. It will be seen that, for $\xi\in \mathcal P_c$, the lattice function  $\Phi_\xi$ is periodic with respect to  translations over elements in $c\hat Q^\vee\subset P$  and that $M_{\lambda}^{(c)}(\xi)=\Phi_\xi(\lambda)$ for $\lambda\in P_c$ (see Rem. \ref{rem:tau} for more details).
\end{remark}

\subsection{Structure constants}

Theorem \ref{R1} gives rise to an affine analog of the ($t$-deformed) Littlewood-Richardson coefficients:
\begin{equation}\label{LRcoef}
M_\lambda^{(c)} M_\mu^{(c)} =\sum_{\nu\in P_c} \text{c}^{\nu,(c)}_{\lambda,\mu }(t) M_\nu^{(c)}   \qquad (\lambda,\mu\in  P_c).
\end{equation}
 When $\mu$ is minuscule or quasi-minuscule we have an explicit expression for  the structure constants $\text{c}_{\lambda,\mu}^{\nu ,(c)}(t)$.
Let us recall in this connection that a weight $\mu\in P$ is called {\it minuscule} if  $0\leq \langle \mu, \alpha^\vee\rangle\leq 1$ for all $\alpha\in R_0^+$ and {\it quasi-minuscule} if $0\leq \langle \mu, \alpha^\vee\rangle\leq 2$ for all $\alpha\in R_0^+$ with the upper bound being realized only once (i.e., the quasi-minuscule weight is unique and equal to the highest short root $\vartheta$).

 To formulate this explicit expression for the corresponding  structure constants let us put:

$${\mathrm m}_\lambda(e^{i\xi}):=
\sum_{\nu\in W_0  \lambda} e^{i\langle \nu , \xi \rangle}\quad (\lambda\in { P},\, \xi\in V ),
$$
and
\begin{equation}\label{htau}
\hat h_t:=t_{\alpha_0}\, \hat e_t (-\alpha_0^\vee )
\end{equation}
with
\begin{equation}\label{etau}
\hat e_t(\eta ):= t_{\vartheta}^{\langle \hat \rho^\vee_s,\eta\rangle}t_{\varphi}^{\langle  \rho(\hat R_0^\vee)-\hat \rho^\vee_s,\eta\rangle}
=\prod_{\beta\in {\hat R_0^+}} t_{\beta}^{\langle \eta, \beta^\vee\rangle/2 } \quad (\eta\in {\hat Q})
\end{equation}
and where  $\hat \rho^\vee_s = \frac12\sum_{\alpha\in W_0\vartheta\cap R_0^+}\hat\alpha^\vee\in \hat P^\vee$
 (so $\hat e_t(\eta)$ is a Laurent polynomial in $t_\alpha$, cf. e.g. \cite{mac:poincare}).

\begin{theorem}[Affine Pieri Rule]\label{R2}
For $\lambda\in P_c$, $\xi\in\mathcal P_c$ and $\omega\in P^+$ minuscule or quasi-minuscule, we have that
\begin{equation}\label{Lomega-sym}
{\mathrm m}_\omega(e^{i\xi}) M_\lambda^{(c)}(\xi )  
 =
U_{\lambda,\omega}(t) M_\lambda^{(c)} (\xi )+ \sum_{\substack{\nu\in W_0\omega\\ \lambda+\nu\in {P_c}  }} 
V_{\lambda,\nu}(t) M_{\lambda + \nu}^{(c)} (\xi ) .
\end{equation} 
Here
\begin{align}\label{Vlambdanu}
V_{\lambda,\nu}(t) &:=  \prod_{\substack{\beta\in {\hat R_0^+}\\ \langle \lambda,\beta\rangle=0 \\ \langle \nu,\beta\rangle > 0 }}
\frac{1-t_{\beta} \hat e_t (\beta) }{1-\hat e_t (\beta)}
\prod_{\substack{\beta\in {\hat R_0^+}\\ \langle \lambda,\beta\rangle=c \\ \langle \nu,\beta\rangle < 0 }}
\frac{1-t_{\beta} \hat h_t \hat e_t (-\beta)}{1-\hat h_t \hat e_t (-\beta)} 
\end{align}
and the coefficient $U_{\lambda ,\omega}(t)$ is given by  Eq. \eqref{Ulambdaomega} below (which implies in particular that $U_{\lambda ,\omega}(t)$ vanishes when $\omega$ is minuscule).  
\end{theorem}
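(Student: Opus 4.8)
The plan is to realize the multiplication operator $\mathrm{m}_\omega(e^{i\xi})$ as a difference operator acting on the periodic Macdonald spherical functions, and then to read off the structure constants from the explicit action of that operator on the basis. Concretely, since $\mathrm{m}_\omega(e^{i\xi})=\sum_{\nu\in W_0\omega}e^{i\langle\nu,\xi\rangle}$ is a symmetric exponential, the key is to produce a ($W_0$-invariant) difference operator $D_\omega$ in the weight variable $\lambda$ that is diagonalized by the affine Macdonald spherical lattice functions $\Phi_\xi$, with eigenvalue precisely $\mathrm{m}_\omega(e^{i\xi})$ on the nodes $\xi\in\mathcal P_c$. Using the identity $M_\lambda^{(c)}(\xi)=\Phi_\xi(\lambda)$ from the Remark following Theorem \ref{R1}, the eigenvalue equation $D_\omega\Phi_\xi=\mathrm{m}_\omega(e^{i\xi})\Phi_\xi$ transcribes directly into a relation of the form \eqref{Lomega-sym}, where the coefficients of $D_\omega$ become the desired $U_{\lambda,\omega}(t)$ and $V_{\lambda,\nu}(t)$.

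First I would construct, via the affine Hecke algebra representation announced in the introduction (Sections \ref{sec5}--\ref{sec6}), the affine intertwining operator and use it to build the difference operator $D_\omega$ whose symbol is $\mathrm{m}_\omega$. The operator should be a sum over the orbit weights $\nu\in W_0\omega$ of shift operators $\lambda\mapsto\lambda+\nu$ with rational coefficients in the variables $\hat e_t(\beta)$, $\beta\in\hat R_0^+$; the minuscule/quasi-minuscule hypothesis is what guarantees that the only translations appearing are by the single orbit $W_0\omega$ (plus a possible diagonal term from the $0$-weight when $\omega=\vartheta$ is quasi-minuscule). Second, I would compute the coefficient of each shift $\lambda\mapsto\lambda+\nu$. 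Away from the alcove walls the coefficient is a product over positive roots $\beta$ with $\langle\nu,\beta\rangle>0$ of factors $\frac{1-t_\beta \hat e_t(\beta)}{1-\hat e_t(\beta)}$; the periodicity of $\Phi_\xi$ under $c\hat Q^\vee$ then reflects the second product in \eqref{Vlambdanu} over walls where $\langle\lambda,\beta\rangle=c$, via the twist $\hat h_t$ in \eqref{htau}. The affine/periodic boundary conditions are exactly what convert the finite Pieri coefficients of \cite{die-ems:unitary} into the two-product formula for $V_{\lambda,\nu}(t)$.

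Third, I would isolate the diagonal coefficient $U_{\lambda,\omega}(t)$: it arises only from the part of $D_\omega$ that does not shift $\lambda$, which is nonempty precisely when $\omega=\vartheta$ contributes a zero weight to its orbit, matching the parenthetical claim that $U_{\lambda,\omega}$ vanishes in the minuscule case. Its explicit form \eqref{Ulambdaomega} will be read off from the residual term of the operator, which I would verify by tracking the contributions at the $\langle\lambda,\beta\rangle\in\{0,c\}$ walls. The cleanest way to package all of this is to establish the eigenvalue identity first as a global relation for $\Phi_\xi$ on all of $\hat P$ (or on a fundamental domain for $c\hat Q^\vee$), and only afterwards restrict to the alcove $P_c$ and the nodes $\mathcal P_c$, so that the genuinely affine phenomena---the wall terms and the $\hat h_t$ twist---are handled uniformly rather than case by case.

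The main obstacle I anticipate is the careful bookkeeping of the boundary contributions: verifying that the naive shift $\lambda+\nu$ lands back inside $P_c$ exactly when the corresponding $V$-coefficient is nonzero, and that the reflected contributions at the walls $\langle\lambda,\beta\rangle=c$ reorganize correctly under the affine Weyl group symmetry into the second product in \eqref{Vlambdanu} with the twist $\hat h_t\hat e_t(-\beta)$. In particular one must show that the two products in $V_{\lambda,\nu}(t)$ are disjoint (no root $\beta$ satisfies both $\langle\lambda,\beta\rangle=0$ and $\langle\lambda,\beta\rangle=c$ since $c>1$) and that their combination is consistent with the $W_0$-invariance of $M_\lambda^{(c)}$ and the periodicity of $\Phi_\xi$. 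This is where the difference between the usual Pieri rule \eqref{pieri} and its affine version \eqref{pieri-affine} genuinely lives, and where the representation-theoretic input from the affine Hecke algebra---rather than a direct symmetric-function manipulation---becomes essential.
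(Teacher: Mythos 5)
Your proposal is essentially the paper's own proof: the paper sets $L_\omega=\mathcal{J}\,L_{\omega;1}\,\mathcal{J}^{-1}$ (the free orbit-shift operator conjugated by the affine intertwiner of Section~\ref{sec5}), proves the eigenvalue identity $L_\omega\Phi_{\xi_\mu}=\mathrm{m}_\omega(e^{i\xi_\mu})\Phi_{\xi_\mu}$ (Proposition~\ref{diagonal:thm}), computes the action of $L_\omega$ on $\mathcal{C}(P)^{W}\cong\mathcal{C}(P_c)$ by exactly the wall bookkeeping you describe (Lemmas~\ref{Iqm-action:lem} and~\ref{theta:lem} together with Macdonald's product formula~\eqref{poincare-stab} produce $U_{\lambda,\omega}$ and $V_{\lambda,\nu}$), and then transcribes the result via $\Phi_{\xi}(\lambda)=M_\lambda^{(c)}(\xi)$, as you propose. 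The one imprecision is your attribution of the diagonal term to a ``zero weight'' of the orbit: $W_0\omega$ contains no zero weight, and in the paper $U_{\lambda,\omega}(t)$ arises instead from orbit weights $\nu\in W_0\vartheta$ whose shifts $\lambda+\nu$ are folded back onto $\lambda$ by the affine Weyl group at the walls, together with the correction terms $d_{\lambda,\nu}$ in the action of $\mathcal{J}^{-1}$ --- precisely what your wall-tracking step would uncover.
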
 

To state the exact expressions for the  coefficient $U_{\lambda ,\omega}(t)$ in \eqref{Lomega-sym}  and for  $\text{c}^{\nu,(c)}_{\lambda,\mu }(t)$ in Eq. \eqref{LRcoef} when  $\mu$  is (quasi)-minuscule, some more notation regarding the underlying affine Weyl group and affine root system is required.
(A more thorough discussion can 
be found e.g. in \cite{bou:groupes,hum:reflection,mac:affine}).

\section{The affine  Weyl group}\label{sec3}


The {\em affine root system} $R$ associated with the admissible pair $(R_0,\hat R_0)$ is
the set of all \emph{affine roots} $\alpha^\vee+ m_\alpha r c=m_\alpha(\hat\alpha+rc) $ $(\alpha\in R_0, r\in \mathbb Z)$. 
An affine root $a=\alpha^\vee +m_\alpha rc\in R$ will be  regarded as an affine linear function $a:V\to \mathbb{R}$ of the form
$a(x)=\langle x,\alpha^\vee\rangle+ rc $ ($x\in V$, $\alpha\in R$, $r\in\mathbb{Z}$), and gives rise to
an affine reflection $s_a:V\to V$ across the hyperplane $V_a:=\{ x\in V\mid a(x)=0\}$ given by
$s_a(x):=x-a(x)\alpha$.
The choice of positive roots $R^+_0$, with a simple basis $\alpha_1,\ldots ,\alpha_n$, determines the set of affine positive roots 
$R^+:=R_0^{\vee,+} \cup \{ \alpha^\vee+m_\alpha r c \mid  \alpha\in R_0, r\in \mathbb N \}$ 
and a corresponding
basis of affine simple roots  $a_0,\ldots ,a_n$ of the form 
$a_0:=\alpha_0^\vee+c$ and $a_j:=\alpha_j^\vee$ for $j=1,\ldots ,n$. Here  $n$ denotes the rank  of $R_0$ ($=\text{dim}\, V$).
Notice that these conventions imply that the affine root system $R$ is of \emph{twisted} type iff $\hat R_0= u_\varphi R_0$ is not simply-laced and of \emph{untwisted} type otherwise.

The {\em affine Weyl group} $W$ is defined as the group generated by the affine reflections $s_a$, $a\in R$ and contains the finite Weyl group $W_0$ as the subgroup fixing the origin.  It is  an infinite Coxeter  group with the simple affine reflections $s_j := s_{a_j}$ ($j=0,1,\dots, n$) as generators and  subject to the relations
\begin{equation}\label{W-rel2}
(s_js_k)^{m_{jk}}=1,  \qquad  j,k \in\{0,\ldots ,n\}
\end{equation}
Here $m_{jk}=1$ if $j=k$ and
$m_{jk} \in \{   2,3,4,6 \}$  if $j\neq k$
(and the provision that for $n=1$ the order
$m_{10}=m_{01}=\infty$).
In particular, any $w\in W$ can be decomposed  as
\begin{equation}\label{red-exp}
w=s_{j_1}\cdots s_{j_\ell},
\end{equation}
with $j_1,\ldots,j_\ell\in \{ 0,\ldots ,n\}$.  
The \emph{length} $\ell (w)$ is defined as the minimum number of reflections $s_j$ ($j=0,1,\dots,n$) involved in any decomposition \eqref{red-exp} of $w$. Any decomposition \eqref{red-exp} with $\ell=\ell(w)$ is called a {\it reduced expression} of $w$.

A fundamental domain for the action of $W$ on $V$ is given by the  \emph{dominant Weyl alcove}
\begin{equation}\label{alcove}
A_c=\{ x\in V\mid 0\leq \langle x,\beta\rangle\leq c,\ \forall\beta\in {\hat R_0^+}\}  .
\end{equation} 
Furthermore, since our positive scale parameter $c$ is integral-valued the weight lattice $P\subset V$ is stable with respect to the action of $W$ and $P_c$ \eqref{hpc} is a fundamental domain for this restriction. 
Given $x\in V$, we will also write $w_x\in W$ for the \emph{unique} shortest affine Weyl group element such that
\begin{equation}
x_+:=w_x x \in A_c.
\end{equation}
For any $\lambda\in P$ let
\begin{equation}
t[\lambda] := \prod_{a\in R[\lambda]} t_{a'}
\end{equation}
where $(\alpha^\vee+m_\alpha r c)' := \alpha^\vee$ denotes the differential and 
 \begin{equation}\label{e:Slambda}
R[\lambda]:= \{a\in R^+ \mid a(\lambda ) < 0\}.
\end{equation}

The action of $w\in W$ on $V$ induces a dual action on the space $\mathcal{C}(V)$ of functions $f:V\to\mathbb{C}$ given by
\begin{equation}\label{W-action}
(wf)(x):=f(w^{-1}x)\qquad (w\in W,\, f\in \mathcal{C}(V),\, x\in V).
\end{equation}
In  Section~\ref{sec4} we will use that $\mathcal C(P)$ is an invariant subspace under this action.

\section{Affine  Littlewood-Richardson coefficients and fusion rules}\label{sec4}
We are now in the position to make the coefficient $U_{\lambda ,\omega}(t)$ in Theorem~\ref{R2} explicit:
\begin{equation} \label{Ulambdaomega}
U_{\lambda ,\omega}(t)  =
\sum_{\substack{\nu\in W_0\omega \\
 (\lambda + \nu)_+=\lambda   }} t[\lambda + \nu]+ (1-t_{\vartheta}^{-1})
\sum_{\substack{\nu\in W_0\omega \\ w_{\lambda + \nu} \lambda=\lambda   }} d_{\lambda ,\nu} ,
\end{equation}
where 
\begin{subequations}
\begin{equation}\label{c-coef}
d_{\lambda,\nu}:=
\begin{cases}
\theta (\lambda  + \nu) e_t (-\nu ) h_t^{\text{sign} (\langle\lambda ,\hat \nu\rangle )} 
    &  \text{if $\nu\in W_0\vartheta$} \\
0 &   \text{otherwise} 
\end{cases}
\end{equation}
(with the convention that $\text{sign}(0):=0$) and
\begin{equation}\label{hvee}
h_t:=t_{\vartheta}\, e_t (-\alpha_0)\quad\text{with}\quad e_t (\nu):=\prod_{\alpha\in R^+_0} t_{\alpha}^{\langle \nu, \alpha^\vee \rangle/2 } .
\end{equation}
 (cf. Eq. \eqref{etau}). 
Here
 $\theta:P\to \mathbb{N}\cup \{ 0\}$ denotes the function 
 \begin{equation}\label{e:theta}
\theta(\lambda ):= \left| {\{a\in R^+ \mid a(\lambda )=-2\}} \right|.
\end{equation} 
\end{subequations}

\begin{remark}
Observe that $d_{\lambda,\nu}$ is also a Laurent polynomial   in  $t_\alpha$, $\alpha\in R_0$.
We will also see in Lemma~\ref{theta:lem} that  $\theta(\lambda + \nu)= 0$ if $\nu$ is in the orbit of a minuscule weight and therefore it is also possible to write 
$d_{\lambda,\nu}=\theta (\lambda  + \nu) e_t (-\nu ) h_t^{\text{sign} (\langle\lambda , \nu^\vee\rangle )}. $
\end{remark}


A function $t:W\to (-1,1)\setminus \{ 0\}$ satisfying 
$t_{w\tilde{w}}= t_w t_{\tilde{w}} $ if $\ell(w\tilde{w} )=\ell(w)+\ell(\tilde{w} ) $
is called a length multiplicative function. 
We compatibilize this function with the root multiplicative function by setting 
$t_j:=t_{s_j}=t_{\alpha_j}$ for $j=0,1,\dots,n$.  For any finite subgroup $G\subset W$ we consider  the  generalized Poincar\'e series  
\begin{equation}\label{Poincare}
G(t)=\sum_{w\in G} t_w
\end{equation}
  of $G$ associated with the length multiplicative function $t$.

\begin{corollary}[Affine Littlewood-Richardson Coefficients]\label{Cmain}
If $\omega$ is a (quasi)-minuscule weight and  $\lambda,\nu \in P_c$, then the affine Littlewood-Richardson coefficients in \eqref{LRcoef} are given by
\begin{equation}
\text{c}^{\nu,(c)}_{\lambda,\omega}(t)
= 
\begin{cases}
W_{0;\omega}(t) \bigl( U_{\lambda,\omega}(t) - U_{0,\omega}(t)  \bigr) &  \text{if $\lambda=\nu$,}\\
 W_{0;\omega}(t) V_{\lambda,\nu - \lambda}(t) & \text{if $\nu- \lambda\in W_0\omega$,}\\
0 & \text{otherwise}.
\end{cases}
\end{equation}
Here $W_{0;\omega}(t)$ refers to generalized Poincar\'e series~\eqref{Poincare} of 
 $W_{0;\omega}=\{w\in W_0 \mid w\omega=\omega  \}$.
\end{corollary}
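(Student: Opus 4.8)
The plan is to derive the corollary directly from the Affine Pieri Rule in Theorem~\ref{R2}, the only extra ingredient being the precise relation between the periodic Macdonald spherical function $M_\omega^{(c)}$ and the orbit sum $\mathrm{m}_\omega$. Indeed, the structure constants in \eqref{LRcoef} encode the \emph{product} $M_\lambda^{(c)}M_\omega^{(c)}$, whereas the Pieri rule \eqref{Lomega-sym} describes multiplication by $\mathrm{m}_\omega$; hence the first task is to rewrite $M_\omega^{(c)}$ in terms of $\mathrm{m}_\omega$, after which everything reduces to reading off coefficients in the basis furnished by Theorem~\ref{R1}.

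First I would record that $M_\lambda$ coincides with the (multiparameter) Hall--Littlewood function, so that $M_\lambda=W_{0;\lambda}(t)P_\lambda$ with $P_\lambda$ monic triangular, $P_\lambda=\mathrm{m}_\lambda+\sum_{\mu<\lambda}b_{\lambda\mu}(t)\,\mathrm{m}_\mu$, the sum running over dominant $\mu$ strictly below $\lambda$. For a nonzero minuscule $\omega$ there is no dominant weight strictly below $\omega$, so $P_\omega=\mathrm{m}_\omega$ and hence $M_\omega=W_{0;\omega}(t)\mathrm{m}_\omega$; for the quasi-minuscule weight $\omega=\vartheta$ the only dominant weight strictly below $\vartheta$ is $0$, whence $M_\vartheta=W_{0;\vartheta}(t)\mathrm{m}_\vartheta+\kappa_\omega$ with $\kappa_\omega$ a scalar (the coefficient of the constant $\mathrm m_0=1$). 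Restricting to the nodes $\mathcal P_c$, this yields the uniform relation
\[
M_\omega^{(c)}=W_{0;\omega}(t)\,\mathrm{m}_\omega+\kappa_\omega ,
\]
where $\kappa_\omega=0$ when $\omega$ is minuscule.

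Next I would substitute this into the product and invoke \eqref{Lomega-sym}: for $\lambda\in P_c$,
\[
M_\lambda^{(c)}M_\omega^{(c)}=W_{0;\omega}(t)\,\mathrm{m}_\omega M_\lambda^{(c)}+\kappa_\omega M_\lambda^{(c)}=W_{0;\omega}(t)\Big(U_{\lambda,\omega}(t)M_\lambda^{(c)}+\sum_{\substack{\nu\in W_0\omega\\ \lambda+\nu\in P_c}}V_{\lambda,\nu}(t)M_{\lambda+\nu}^{(c)}\Big)+\kappa_\omega M_\lambda^{(c)} .
\]
Since $\omega\neq0$ forces $0\notin W_0\omega$, the index $\lambda$ is distinct from every $\lambda+\nu$ occurring in the sum, so comparing with \eqref{LRcoef} and using that $\{M_\nu^{(c)}\}_{\nu\in P_c}$ is a basis (Theorem~\ref{R1}) I can read off: $\text{c}^{\nu,(c)}_{\lambda,\omega}(t)=W_{0;\omega}(t)V_{\lambda,\nu-\lambda}(t)$ when $\nu-\lambda\in W_0\omega$, $\text{c}^{\nu,(c)}_{\lambda,\omega}(t)=0$ for all other $\nu\neq\lambda$, and $\text{c}^{\lambda,(c)}_{\lambda,\omega}(t)=W_{0;\omega}(t)U_{\lambda,\omega}(t)+\kappa_\omega$ on the diagonal. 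It remains only to identify $\kappa_\omega$.

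Finally, to pin down $\kappa_\omega$ I would specialize to $\lambda=0$. Because $M_0=W_0(t)$ is the constant function, $M_0^{(c)}M_\omega^{(c)}=W_0(t)M_\omega^{(c)}$, whose coefficient on $M_0^{(c)}$ vanishes (again as $\omega\neq0$); on the other hand the diagonal formula above at $\lambda=0$ gives that coefficient as $W_{0;\omega}(t)U_{0,\omega}(t)+\kappa_\omega$. Hence $\kappa_\omega=-W_{0;\omega}(t)U_{0,\omega}(t)$, and substituting back produces $\text{c}^{\lambda,(c)}_{\lambda,\omega}(t)=W_{0;\omega}(t)\bigl(U_{\lambda,\omega}(t)-U_{0,\omega}(t)\bigr)$, covering all three cases. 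The step I expect to be the main obstacle is the normalization in the second paragraph: rigorously identifying the leading coefficient of the Macdonald spherical function with the generalized Poincar\'e series $W_{0;\omega}(t)$ of the stabilizer in the unequal-parameter setting, and confirming that for the quasi-minuscule weight no monomials beyond $\mathrm{m}_\vartheta$ and the constant survive. Once that relation is in hand, the remainder is bookkeeping with the Pieri rule and the $\lambda=0$ consistency check.
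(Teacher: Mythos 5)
Your proof is correct, and at its core it is the same argument as the paper's: combine the affine Pieri rule (Theorem \ref{R2}) with the basis property (Theorem \ref{R1}), read off coefficients, and fix the diagonal term using the case $\lambda=0$. The difference lies in how you obtain the normalization relating $M_\omega^{(c)}$ to ${\mathrm m}_\omega$. The paper stays self-contained: applying Theorem \ref{R2} at $\lambda=0$ and using $M_0(\xi)=W_0(t)$ together with $V_{0,\omega}(t)=W_0(t)/W_{0;\omega}(t)$ (which follows from \eqref{Vlambdanu} and Macdonald's product formula \eqref{poincare-stab}) gives directly $M_\omega^{(c)}=W_{0;\omega}(t)\bigl({\mathrm m}_\omega(e^{i\xi})-U_{0,\omega}(t)\bigr)$, and substituting this into the Pieri rule for general $\lambda$ finishes the proof. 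You instead invoke the classical triangularity $M_\omega=W_{0;\omega}(t)\,{\mathrm m}_\omega+\kappa_\omega$ (with $\kappa_\omega=0$ in the minuscule case), a fact the paper never proves --- it uses the monomial expansion \eqref{m:exp} in Section \ref{sec9} but never identifies the leading coefficient --- and then you pin down $\kappa_\omega$ by the consistency check at $\lambda=0$. What your route buys is that you never need the explicit value of $V_{0,\omega}(t)$; what it costs is dependence on the external identification of the leading coefficient with the stabilizer Poincar\'e series in the unequal-parameter setting, precisely the step you flag as the main obstacle. That identification is classical (it is in Macdonald's monograph on $p$-adic spherical functions, and in Nelsen--Ram, both cited in the paper), and your supporting dominance-order claims (no dominant weight strictly below a minuscule $\omega$; only $0$ strictly below the quasi-minuscule $\vartheta$) are correct, so this is a matter of citation rather than a genuine gap. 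If you wanted to avoid the external fact altogether, note that running your own scheme with an undetermined leading coefficient $A_\omega$ yields $A_\omega V_{0,\omega}(t)=W_0(t)$ by comparing coefficients of $M_\omega^{(c)}$ in $M_0^{(c)}M_\omega^{(c)}=W_0(t)M_\omega^{(c)}$, and evaluating $V_{0,\omega}(t)$ via \eqref{Vlambdanu} and \eqref{poincare-stab} then returns you exactly to the paper's argument.
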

\begin{proof}
Applying Theorem \ref{R2} with $\lambda=0$ and using that $M_0(\xi) = W_0(t)$ and $V_{0,\theta}(t)=W_{0}(t)/W_{0;\vartheta}(t)$  yields the identity $M^{(c)}_\omega(\xi) = W_{0;\omega}(t) \bigl({\mathrm m}_\omega(e^{i\xi}) - U_{0,\omega}(t) \bigr)$. Combining this with  Theorem  \ref{R2} entails  the desired result.
\end{proof}
When $R_0$ is of type $A_{n-1}$ and $\omega$ is minuscule, Corollary~\ref{Cmain} reproduces the affine Pieri rule in Eq. \eqref{pieri-affine}.

At $t_\alpha=0$ ($\alpha\in R_0$) the Macdonald spherical functions $M_\lambda$ \eqref{msf-decomposition} specialize to the Weyl characters
\begin{equation}\label{Weyl character}
  \chi_\lambda(\xi )= \delta(\xi)^{-1}\sum_{w\in W_0} (-1)^{\ell(w)}e^{i\langle w\xi,\lambda+\rho\rangle}\qquad 
   (\lambda \in P^+) ,
 \end{equation}
where $\rho=\rho(R_0)$ and $\delta(\xi)$ denotes the  Weyl denominator
\begin{equation}\label{mcfun} 
 \delta({{\xi}})
 =\sum_{w\in W_0} (-1)^{\ell(w)}e^{i\langle w\xi,\rho\rangle}
 = \prod_{\alpha\in R_0^+}  (e^{i\langle \xi,\alpha\rangle/2}-e^{-i\langle \xi,\alpha\rangle/2}) .
\end{equation}
The nodes $\mathcal{P}_c$ are in this situation given explicitly by
 \begin{equation}\label{tau=01}
\xi_\mu(0) :=\frac{2\pi }{h+c} (\hat\rho+\mu)  \qquad  (\mu \in \hat P_c),
\end{equation} 
where $h = h(R)= 1-\langle  \rho,\alpha_0^\vee\rangle$ denotes the Coxeter number of the affine root system $R$. 
Indeed, $\lim_{t\to 0} \xi_\mu(t)=\xi_\mu(0)$ by Lemma~\ref{lemma} and Eq. \eqref{critical:eq} below, since
$\hat\rho_v(\xi)=h \xi$ for $t_\alpha=0$ in view of Schur's lemma.

The corresponding parameter degeneration of the structure constants in $\mathcal{C}(\mathcal{P}_c)$,
\begin{equation}
\chi_\lambda (\xi) \chi_\mu (\xi) =\sum_{\nu\in P_c} \text{c}^{\nu,(c)}_{\lambda,\mu }(0) \chi_\nu (\xi)  \qquad (\lambda,\mu\in  P_c,\,\xi\in \mathcal P_c),
\end{equation}
model the fusion rules of the genus-zero Wess-Zumino-Witten conformal field theories associated with the affine Lie algebra
$\hat{\mathfrak{g}}$ of type $R^\vee=\{2a/\langle a',a'\rangle \mid a\in R\}$ \cite{dif-mat-sen:conformal,hon:fusion,kac:infinite} (cf. also Remark \ref{dynkin:rem} below).
 Corollary \ref{Cmain} gives rise to the following Pieri rule in the fusion ring for $\mu=\omega\in P^+$ minuscule or quasi-minuscule:
\begin{equation*}\label{pieri:weylchars}
\boxed{\chi_\omega(\xi) \chi_\lambda(\xi)  
 =
(N_{0,\omega} - N_{\lambda,\omega}) \chi_\lambda(\xi)+ \sum_{\substack{\nu\in W_0\omega\\ \lambda + \nu\in {P_c}  }} \chi_{\lambda + \nu} (\xi ) 
\qquad  (\lambda \in  P_c,\,\xi\in \mathcal P_c),
}
\end{equation*} 
with 
$$N_{\lambda,\omega} =| \{a_j \mid a_j(\lambda)=0 \text{ and  $\alpha_j\in W_0\omega$} \}| $$
 (so $N_{\lambda, \omega}=0$ if $\omega$ is minuscule). When $R_0$ is of type $A_{n-1}$ and $\omega$ is minuscule, this Pieri rule amounts to  Eq. \eqref{pieri-affine-schur}.

To infer the boxed Pieri rule, one  first  observes that  $\lim_{t\to 0} V_{\lambda,\nu - \lambda}(t)=1$,
 $\lim_{t\to 0} W_{0;\omega}(t)=1$, and
$$
\lim_{t\to 0}  (1-t_\vartheta^{-1})d_{\lambda,\nu}
=
\begin{cases}
-\lim_{t\to 0}  t_\vartheta^{-1}  e_t(-\nu) & \text{if  $\nu\in R_0^-\cap W_0\vartheta$  and $\langle \lambda,\hat \nu\rangle=0$} ,\\
-\lim_{t\to 0}    e_t(-\alpha_0-\nu) & \text{if  $\nu\in R_0^+\cap W_0\vartheta$ and $\langle \lambda,\hat \nu\rangle=c$},
\end{cases}
$$
and where  $R_0^-= - R_0^+=R_0 \backslash R_0^+$.  Since 
$\lim_{t\to 0}  t_\vartheta^{-1} e_t(\alpha)=1$ if $\alpha\in R^+_0$ is a simple root and $0$ otherwise,
 this shows that $\lim_{t\to 0} U_{\lambda,\omega}(t)=-N_{\lambda,\omega}$ in view of  Lemma~\ref{theta:lem}  below.
The  asserted Pieri rule in the fusion ring  is now immediate  from Corollary~\ref{Cmain}.

\begin{remark}\label{dynkin:rem}
If, following \cite[Chapter I]{mac:affine}, we denote by $S(R_0;c)=R_0+ c \mathbb Z$ the (untwisted) affine root system associated with $R_0$. Then the following table identifies
the Dynkin type of the affine root system $R$ and of the affine Lie algebra $\hat{\mathfrak{g}}$
in terms of the admissible pair $(R_0,\hat{R}_0)$, via the classification in
 \cite[Chapter I.3]{mac:affine}:
\begin{equation*} 
\begin{tabular}{@{}|c|cc|c|@{}} 
\toprule
$(R_0,\hat{R}_0)$ &$R$ &$\hat{\mathfrak{g}}$  \\
\midrule
$(R_0, R_0^\vee)$& $S(R_0^\vee;c)$ &  $S(R_0^\vee;c)^\vee $  \\ 
$(R_0, u_{\varphi} R_0)$ & $S(R_0;c/u_\varphi)^\vee $ &  $S(R_0;c/u_\varphi)$  \\ 
\bottomrule
\end{tabular} 
\end{equation*}
\end{remark}

\begin{remark} 
If  $\hat{\mathfrak{g}}$ is untwisted (i.e. $(R_0,\hat{R}_0)=(R_0, u_{\varphi} R_0)$), then 
 $\text{c}_{\lambda,\vartheta}^{\nu,(c)}(0)$ is a nonnegative integer. The same is true when
   $\hat{\mathfrak{g}}$ is twisted (i.e. $(R_0,\hat{R}_0)=(R_0,  R_0^\vee)$ with $R_0$ not simply-laced)  provided  $c$ is not an integer multiple of $\tfrac{\langle\varphi,\varphi\rangle}{\langle\vartheta,\vartheta\rangle}\in\{2,3\}$.
If  $\hat{\mathfrak{g}}$ is twisted and $c$ is an integer multiple of $\tfrac{\langle\varphi,\varphi\rangle}{\langle\vartheta,\vartheta\rangle}\in\{2,3\}$, however,  then 
 $\text{c}_{\lambda,\vartheta}^{\lambda,(c)}(0)=N_{0,\vartheta} - N_{\lambda,\vartheta}=-1$
when $\lambda\in P_c$  is chosen such that $a_j(\lambda)=0$ for all $j\in\{0,1,2,\dots, n\}$ with $\alpha_j\in W_0\vartheta$.  This state of affairs is in agreement with prior observations
in \cite{gin:twisted}  regarding the
occurrence of
negative structure constants in the genus-zero Wess-Zumino-Witten fusion ring 
when the underlying affine Lie algebra  $\hat{\mathfrak{g}}$ is twisted.
\end{remark}

\begin{remark}

Recently in  \cite{die:deformation}  a deformation of the Wess-Zumino-Witten fusion ring of type $\widehat{BC}_n=A_{2n}^{(2)}$ was derived, based on a diagonalization of a finite $q$-boson model with diagonal open end boundary conditions obtained in \cite{die-ems:orthogonality}.  The approach in loc. cit. does not use affine Hecke algebras  and hinges instead on Sklyanin's quantum inverse scattering method (a.k.a. the algebraic Bethe Ansatz method) for  the diagonalization of quantum integrable eigenvalue problems with boundary conditions.



\end{remark}

\section{Affine intertwining operator}\label{sec5}
The remainder of the paper is devoted to the proofs of Theorems \ref{R1} and \ref{R2} with the aid of the
 {\em affine Hecke algebra}  ${H}$.  By definition, $H$ is the unital associative algebra over $\mathbb C$ with invertible generators  $T_0, T_1\dots, T_n$  
such that the following relations are satisfied
 \begin{equation}\label{quadratic-rel}
 (T_j-t_j)(T_j+1)=0\qquad (0\leq j\leq n),
 \end{equation}
 \begin{equation}\label{braid-rel}
 \underbrace{T_jT_kT_j\cdots}_{m_{jk}\ {\rm factors}}
 =\underbrace{T_kT_jT_k\cdots}_{m_{jk}\ {\rm factors}}\qquad (0\leq j\neq k\leq n),
 \end{equation}
where the number of factors $m_{jk}$ on both sides of the braid relation \eqref{braid-rel} is the same as the order of the corresponding braid relation \eqref{W-rel2} for $W$  (see e.g. \cite{hum:reflection,mac:affine}).

For a reduced expression $
w=s_{j_1}\cdots s_{j_\ell},$ let 
$T_w:=T_{j_1}\dots\,T_{j_\ell}$
(which does not depend on the choice of the reduced expression by virtue of
the braid relations). It is known that the elements $T_w$, with $w\in W$, form a basis for $H$ over $\mathbb C$.

The subalgebra of $H$ generated by $T_1, \dots T_n$ is referred to as the \emph{finite Hecke algebra} $H_0$ (associated with $W_0$ and $t$).

   To define the affine intertwining operator we need the following integral-reflection representation of $H$.

\begin{proposition}\label{integral-reflection:thm}
The following defines an action of $ H$ on $\mathcal C(P)$:
\begin{equation}\label{Ia}
 T_j  f=(t_j s_j+(t_j-1)  J_j) f\qquad (f\in \mathcal C(P), j=0,\ldots ,n),
\end{equation}
 where   $J_j:\mathcal C(P)\to \mathcal C(P)$ denotes the operator given by
  \begin{equation}\label{Ij}
(J_j f)(\lambda ) := 
 \begin{cases}
-\sum_{k=1}^{a_j(\lambda)}   f(\lambda-k\alpha_j)   &\text{if}\ a_j(\lambda)> 0 ,\\
0&\text{if}\ a_j(\lambda)=0 ,\\
\sum_{k=0}^{-a_j(\lambda)-1}   f(\lambda+k\alpha_j)
&\text{if}\ a_j(\lambda) <0,
\end{cases} 
\end{equation}
\end{proposition}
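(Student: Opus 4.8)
The plan is to verify that the operators $T_j$ defined in \eqref{Ia} satisfy the defining relations \eqref{quadratic-rel} and \eqref{braid-rel} of $H$. Throughout I fix $j$ and abbreviate $a=a_j(\lambda)$ and $\alpha=\alpha_j$, using that $s_j\lambda=\lambda-a\alpha$ and $a_j(s_j\lambda)=-a$ (both for $1\le j\le n$ and, thanks to the $+c$ in $a_0$, for $j=0$).

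First I would establish three fundamental identities for the summation operator $J_j$ in \eqref{Ij}, namely
\[ J_j(1-s_j)=1-s_j,\qquad s_jJ_j=-J_j,\qquad J_j^2=J_j . \]
Each is checked by an elementary case distinction on the sign of $a_j(\lambda)$: since $J_j$ is a directed telescoping sum along $\alpha_j$, reindexing the sum defining $J_js_jf$ by means of $s_j(\lambda-k\alpha)=\lambda-(a-k)\alpha$ collapses $J_jf-J_js_jf$ to the single difference $f(\lambda)-f(\lambda-a\alpha)=(1-s_j)f(\lambda)$, which gives the first identity; the same reindexing yields $s_jJ_jf=-J_jf$, and the third identity then follows formally (the image of $J_j$ consists of $s_j$-antisymmetric functions, on which $J_j$ restricts to the identity by the first identity). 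Granting these, a direct expansion of $T_j^2=(t_js_j+(t_j-1)J_j)^2$, using $s_j^2=1$, the consequence $s_jJ_j+J_js_j=s_j-1$, and $J_j^2=J_j$, gives $T_j^2=t_j(t_j-1)s_j+(t_j-1)^2J_j+t_j=(t_j-1)T_j+t_j$, which is precisely \eqref{quadratic-rel}.

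For the braid relations \eqref{braid-rel} I would reduce to rank two. Fix $j\neq k$ with $m_{jk}<\infty$ and a base point $\lambda\in P$, and set $L=\lambda+\mathbb Z\alpha_j+\mathbb Z\alpha_k$. Each of the operators $s_j,s_k,J_j,J_k$ shifts its argument only by integer multiples of $\alpha_j$ or $\alpha_k$, so $L$ is stable under all of them and the value at $\lambda$ of any word in these operators depends only on the restriction $f|_L$. Consequently the identity $\underbrace{T_jT_kT_j\cdots}_{m_{jk}}=\underbrace{T_kT_jT_k\cdots}_{m_{jk}}$ reduces to the corresponding braid identity on the rank-two coset $L$, on which $a_j,a_k$ restrict to affine coordinates and $s_j,s_k$ to affine reflections of the (finite, or affine when $0\in\{j,k\}$) dihedral configuration generated by $\{\alpha_j,\alpha_k\}$. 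It therefore suffices to verify \eqref{braid-rel} in each rank-two case $m_{jk}\in\{2,3,4,6\}$. The orthogonal case $m_{jk}=2$ is immediate: $a_j$ is then constant along $\alpha_k$ and vice versa, so the $\alpha_j$- and $\alpha_k$-directions decouple and $T_j,T_k$ commute. The remaining cases $m_{jk}\in\{3,4,6\}$ are finite computations carried out by tracking the telescoping cancellations of the nested summation ranges in \eqref{Ij}.

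The main obstacle is exactly this last rank-two verification. Because the summation bounds in \eqref{Ij} are governed by the affine functions $a_j,a_k$, which vary as one applies the successive reflections, matching both sides of the braid relation requires careful bookkeeping of how the overlapping summation ranges combine and cancel; the combinatorics grows with $m_{jk}$, so the $G_2$ case $m_{jk}=6$ and the cases involving the affine reflection $s_0$ are the most laborious.
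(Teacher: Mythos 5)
Your verification of the quadratic relations \eqref{quadratic-rel} is correct and complete: the identities $J_j(1-s_j)=1-s_j$, $s_jJ_j=-J_j$, $J_j^2=J_j$ all hold (they follow from the reindexing $s_j(\lambda-k\alpha_j)=\lambda-(a_j(\lambda)-k)\alpha_j$, valid also for $j=0$ because $a_0(s_0\lambda)=-a_0(\lambda)$), and your expansion of $T_j^2$ then gives $(T_j-t_j)(T_j+1)=0$. The localization of the braid relations \eqref{braid-rel} to the rank-two cosets $L=\lambda+\mathbb{Z}\alpha_j+\mathbb{Z}\alpha_k$ is also sound, since all four operators $s_j,s_k,J_j,J_k$ shift arguments only along $\mathbb{Z}\alpha_j+\mathbb{Z}\alpha_k$. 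The genuine gap is what comes next: the rank-two braid identities for $m_{jk}\in\{3,4,6\}$ --- which are the entire substantive content of the proposition, the quadratic relations being comparatively easy --- are never proved. You describe them as ``finite computations carried out by tracking the telescoping cancellations'' and then concede that they are ``the main obstacle.'' They are in fact not literally finite: the identity must hold at every $\lambda\in L$, and the summation lengths in \eqref{Ij} grow with $|a_j(\lambda)|$ and $|a_k(\lambda)|$, so one faces finitely many regimes but sums of unbounded length in each. A proof that stops exactly at the hard step has not proved the proposition.

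The paper avoids this computation altogether, and its device is also the cleanest way to close your gap. For each fixed $k$, the parabolic subalgebra $H_k=\langle T_j\mid j\neq k\rangle$ is identified with the finite Hecke algebra of the finite (possibly reducible) rank-$n$ root system $R_k=\{(a')^\vee\mid a\in R,\ a(v_k)=0\}$ obtained by recentering the origin at the alcove vertex $v_k$; under the translation isomorphism $(\ell_k f)(y)=f(v_k+y)$ the operators $I_j$, $j\neq k$, become the integral-reflection operators of the finite root system $R_k$, for which the representation property is already known (\cite[Lem.~4.2]{die-ems:unitary}, extended to reducible systems as in Remark~\ref{Prop41DE-Unitary}). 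Since every defining relation of $H$ involves at most two generators, for $n\geq 2$ any such relation avoids some index $m$ and hence lies in $H_m$, while for $n=1$ there is no braid relation at all; so the proposition follows with no rank-two computation. This also corrects a misconception in your sketch: when $m_{0k}<\infty$ the group $\langle s_0,s_k\rangle$ is a \emph{finite} dihedral group fixing the affine subspace $\{a_0=a_k=0\}$ pointwise, not an ``affine'' configuration, and translating the origin to that subspace turns your $s_0$-cases into the very same finite rank-two systems $A_1\times A_1$, $A_2$, $B_2$, $G_2$ as the others. If, after your reduction to rank two, you import the finite-type result of \cite{die-ems:unitary} in exactly this way, your argument becomes a complete proof --- essentially a rank-two version of the paper's.
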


\begin {proof}

For any $k\in\{0,1,\dots, n\}$ consider 
the (finite dimensional) parabolic subalgebra $H_k$ of $H$ generated by $T_0, T_1,\dots, T_{k-1}, T_{k+1},\dots, T_n $. The idea of the proof is  to show that, for any (fixed) $k$, $T_j\mapsto I_j$, $j\neq k$ extends to a representation of $H_k$ on $\mathcal C(P)$.  
Here $I_j :=t_j s_j+(t_j-1)  J_j$ denote the operator on the right-hand side of Eq.~\eqref{Ia}.
The fact that $H$ is generated by $T_0,\dots, T_n$ subjected to the braid relations and quadratic relations implies then the proposition. 
 For this let us introduce the vertices $v_0=0,v_1,\dots, v_n$ of the alcove $A_c$ \eqref{alcove}, so in particular 
\begin{equation}
a_j(v_k)=\delta_{j,k}, \qquad \text{for all } j\neq k.
\end{equation}
We fix a $k\in\{0,1,\dots, n\}$ and consider the finite subsystem obtained from $R$ by  considering the vertex $v_k$ as the ``new origin'':
 $R_k = \{(a')^\vee  \mid a \in R, a(v_k)=0\}$.  Then $a\mapsto a'$ defines a  root system isomorphism from  the parabolic subsystem  $\{a \in R \mid a(v_k)=0\}$ of $R$ onto $R_k^\vee$. 
The root system $R_k$ is a finite root system of rank $n$ in $V$, although  not necessarily irreducible. A basis of simple roots for $R_k$ is given by $\alpha_j$,  $j\neq k$.  The map $s_j \mapsto s'_j =s_{\alpha_j^\vee}$, $j\neq k$ defines a group isomorphism from 
the parabolic subgroup $W_k=\langle s_j\mid j\neq k \rangle$ of $W$ to the finite Weyl group $W_0(R_k)$ of $R_k$.  This isomorphism induces a natural isomorphism from 
 the  parabolic subalgebra $H_k=\langle T_j \mid j\neq k\rangle$ of  $H$  to the finite Hecke algebra $H_0(W_0(R_k))$ associated with $W_0(R_k)$ and $t_j$, $j\neq k$.

From $R_k \subset R_0$ follows  that $P\subset P(R_k)$,  where $P(R_k)$ denotes  the weight lattice of $R_k$.  We will also need the $(-v_k)$-translation  $P'_k := - v_k + P \subset P(R_k)$ of $P$. 
For any  $j\neq k$ consider the integral-reflection operator  $I'_j: \mathcal C(P(R_k)) \to \mathcal C(P(R_k))$ associated to the finite root system $R_k$, i.e.  
 $I'_j = t_j s'_j+(t_j-1)  J'_j$ where $J'_j$ is given by the same expression as \eqref{Ij} but with $a_j(\lambda)$ replaced mechanically by $\langle \alpha^\vee_j,\lambda \rangle$. 
Observe that $\mathcal C(P'_k)$ is invariant under the operators $J'_j$ and $I'_j$, $j\neq k$. 
The linear isomorphism $\ell_k: \mathcal C(P) \to \mathcal C(P'_k$) 
defined by $(\ell_k f)(y)=f(v_k +y)$ satisfies  $ \ell_k(s_j f) = s'_j \ell_k(f)$,
$\ell_k(J_j f) = J'_j \ell_k(f)$,  and  therefore also
$\ell_k(I_j f) = I'_j \ell_k(f)$, for all $j\neq k$. 

 By  \cite[Lem. 4.2]{die-ems:unitary}, applied to the finite root system $R_k$
 and restricted to the finite Hecke algebra part, it follows that  $T_j \mapsto I'_j$, $j \neq k$, defines a representation of $H_0(W_0(R_k))$ on $\mathcal C(P(R_k))$ (see also Remark \ref{Prop41DE-Unitary} below). 
 Since $\mathcal C(P'_k)$ is an invariant subspace under this representation it follows that  $T_j \mapsto I'_j$, $j \neq k$ extends to a representation of $H_0(W_0(R_k))$ on $\mathcal C(P'_k)$.
Using the above mentioned isomorphism from $H_k$ to $H_0(W_0(R_k))$ we deduce that  $T_j \mapsto I'_j$, $j\neq k$ extends to a representation of $H_k$ on
  $\mathcal C(P'_k)$.   By taking the pullback of the linear isomorphism $\ell_k$ we conclude  that $T_j \mapsto I_j$, $j\neq k$ extends to representation of  $H_k$ 
  on $\mathcal C(P)$. Since $k$ was arbitrary this finishes the proof, as indicated in the beginning. 
\end{proof}

\begin{remark}\label{Prop41DE-Unitary}
In   \cite[Lem. 4.2]{die-ems:unitary} it was assumed that the underlying finite crystallographic root system $R$ was irreducible and of full rank. However, the proposition holds for \emph{all} finite crystallographic root systems of full rank. If $R$  is decomposed into  disjoint, irreducible and orthogonal subsystems $R_1  \cup \dots \cup R_\ell$, then $H_0(W_0(R))\simeq H_0(W_0(R_1)) \otimes \dots \otimes H_0(W_0(R_\ell))$ and the action of the integral-reflection representation also decomposes on 
$\mathcal C(P)\simeq \mathcal C(P(R_1)) \otimes \dots  \otimes \mathcal C(P(R_\ell))
$, which yields immediately the result. 

Alternatively, the proof of   \cite[Lem. 4.2]{die-ems:unitary}  works verbatim if the requirement that $R$ be irreducible is dropped. 
\end{remark}

The affine intertwining operator  $\mathcal{J}:\mathcal{C}(P)\to\mathcal{C}(P)$ is now defined as follows:
\begin{equation}\label{int-op}
(\mathcal Jf)(\lambda):=t[\lambda]^{-1} (T_{w_\lambda} f)(\lambda_+).
\end{equation}

\begin{proposition}\label{intertwining:prp}
The operator $\mathcal{J}$  is invertible.
\end{proposition}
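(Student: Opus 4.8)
The plan is to show that $\mathcal{J}$ is triangular with respect to a height order on $P$ measuring the distance to the alcove $A_c$, with nonzero diagonal entries, and then to invert it level by level. For $\lambda\in P$ set $d(\lambda):=\ell(w_\lambda)=|R[\lambda]|$, the number of affine hyperplanes separating $\lambda$ from $A_c$ (so $d(\lambda)=0$ exactly when $\lambda\in A_c$, in which case $w_\lambda=e$, $t[\lambda]=1$ and $(\mathcal{J}f)(\lambda)=f(\lambda)$). The key structural input is a one-step recursion: if $a_i(\lambda)<0$ for some affine simple root $a_i$, then, since all positive affine roots are nonnegative on $A_c\ni\lambda_+=w_\lambda\lambda$, the root $s_i$ is a right descent of $w_\lambda$, so $w_\lambda=w_{s_i\lambda}s_i$ with $\ell(w_\lambda)=\ell(w_{s_i\lambda})+1$ and $(s_i\lambda)_+=\lambda_+$. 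Using $T_{w_\lambda}=T_{w_{s_i\lambda}}T_i$ together with $t[\lambda]=t_i\,t[s_i\lambda]$ (the inversion set of $w_\lambda$ is $R[\lambda]$, and passing to $s_i\lambda$ removes precisely the violated root $a_i$, whose differential gives $t_{a_i'}=t_i$), the definition \eqref{int-op} yields
\[
(\mathcal{J}f)(\lambda)=t_i^{-1}\bigl(\mathcal{J}(T_i f)\bigr)(s_i\lambda),\qquad d(s_i\lambda)=d(\lambda)-1 .
\]
This reduces the value of $\mathcal{J}$ at $\lambda$ to its value at the strictly closer point $s_i\lambda$, at the cost of replacing $f$ by $T_i f$.

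Feeding the explicit form $T_i=t_is_i+(t_i-1)J_i$ from \eqref{Ia}--\eqref{Ij} into this recursion and arguing by induction on $d(\lambda)$, I would establish the finite expansion
\[
(\mathcal{J}f)(\lambda)=t_{w_\lambda}^{-1}\,f(\lambda)+\sum_{\mu\prec\lambda}c_{\lambda,\mu}(t)\,f(\mu),
\]
where $\prec$ is the order described below. The diagonal coefficient comes out as the nonzero monomial $t_{w_\lambda}^{-1}$: in the one-step reduction the $s_i$-part of $T_i$ contributes $t_i f(\lambda)$ while the $J_i$-part contributes $-(t_i-1)f(\lambda)$ through the endpoint $s_i\lambda-m\alpha_i=\lambda$ of its string (with $m=a_i(s_i\lambda)=-a_i(\lambda)$), and these combine to $f(\lambda)$; the prefactor $t_i^{-1}$ together with the inductive diagonal $t_{w_{s_i\lambda}}^{-1}$ gives $t_i^{-1}t_{w_{s_i\lambda}}^{-1}=t_{w_\lambda}^{-1}$. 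Since every $t_\alpha\in(-1,1)\setminus\{0\}$, all diagonal entries are nonzero.

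For the order I would take $\prec$ to refine both $d$ and the position within a single alcove: one may measure it by the total violation $\mathrm{ht}(\lambda):=\sum_{a\in R[\lambda]}(-a(\lambda))\in\mathbb{Z}_{\geq0}$, which already separates the remaining string points $s_i\lambda-k\alpha_i$, $1\le k<m$, from $\lambda$ itself (these lie in the same alcove as $\lambda$ but strictly closer to the wall $a_i=0$, hence have strictly smaller $\mathrm{ht}$). Granting the triangular expansion, invertibility follows by a standard triangular solve: writing $\mathcal{J}=D(\mathrm{Id}+N)$ with $D$ the invertible diagonal $f(\lambda)\mapsto t_{w_\lambda}^{-1}f(\lambda)$ and $N$ strictly lowering the order, the equation $\mathcal{J}f=g$ is solved uniquely by induction along $\prec$: for $\lambda\in A_c$ one has $f(\lambda)=g(\lambda)$, and for $d(\lambda)\geq1$ the value $f(\lambda)$ is determined from $g$ and from the finitely many already-constructed values $f(\mu)$, $\mu\prec\lambda$. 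Because the order takes values in $\mathbb{Z}_{\geq0}$ and each $(\mathcal{J}f)(\lambda)$ is a finite sum, this construction is well defined on all of $\mathcal{C}(P)$ despite its infinite dimensionality and produces a two-sided inverse, so $\mathcal{J}$ is invertible.

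The main obstacle is the triangularity claim itself, that is, verifying in the induction that every off-diagonal point $\mu$ appearing --- both the string points $s_i\lambda-k\alpha_i$ produced directly by $J_i$ and the points $s_i\mu'$ and $\mu'-k\alpha_i$ produced when $T_i$ is applied to the lower-order terms supplied by the inductive hypothesis at $s_i\lambda$ --- indeed satisfies $\mu\prec\lambda$. This is a combinatorial-geometric bookkeeping problem about how the reflection $s_i$, the discrete string-sums of $J_i$, and the alcove geometry interact through the recursion; the crucial point is that along any $\alpha_i$-string each affine root is an affine function, so its sign changes at most once and the height of an interior string point is controlled by the endpoint values, preventing any escape above $\lambda$ in the order. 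Pinning down a single order $\prec$ for which this monotonicity holds at every level, and confirming that the finitely many contributing terms never tie with or exceed $\lambda$, is the technical heart of the argument.
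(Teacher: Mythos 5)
Your overall strategy coincides with the paper's: Proposition \ref{intertwining:prp} is deduced there from a triangularity lemma (Lemma \ref{triangular:prp}) with nonzero diagonal entries $J_{\lambda,\lambda}=t[\lambda]^{-1}=t_{w_\lambda}^{-1}$, proved by induction on $\ell(w_\lambda)$ via exactly your one-step recursion $(\mathcal{J}f)(\lambda)=t_j^{-1}\bigl(\mathcal{J}(T_jf)\bigr)(s_j\lambda)$, and your diagonal computation (the $t_j$ from the $s_j$-part of $T_j$ combining with the $-(t_j-1)$ from the endpoint of the $J_j$-string to give coefficient $1$) is precisely the paper's. However, there is a genuine gap, and you flag it yourself: the triangularity claim is never proved, and the order you propose (the total violation $\mathrm{ht}(\lambda)=\sum_{a\in R[\lambda]}(-a(\lambda))$) is too weak to close the induction. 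The problem sits in the composed terms. By the inductive hypothesis at $s_i\lambda$ you only know $\mathrm{ht}(\mu)<\mathrm{ht}(s_i\lambda)$ for the off-diagonal points $\mu$; applying $T_i$ to such a term produces, among others, $f(s_i\mu)$, and one computes $\mathrm{ht}(s_i\mu)=\mathrm{ht}(\mu)+a_i(\mu)$. A bound on $\mathrm{ht}(\mu)$ gives no control whatsoever on $a_i(\mu)$ (a point far on the positive side of the wall $a_i=0$ costs nothing in $\mathrm{ht}$), so these points can a priori land at or above the level of $\lambda$. Your convexity remark handles only the direct string terms, where both endpoints are controlled; it does not constrain the new endpoint $s_i\mu$. (Also, your parenthetical justification for the direct terms is inaccurate: the string points $s_i\lambda-k\alpha_i$ need not lie in the same alcove as $\lambda$; their $\mathrm{ht}$ is indeed smaller, but by convexity of $\max(0,-a(\cdot))$ along the segment, not for the reason you give.)

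This is exactly the difficulty the paper's choice of order is engineered to remove. It orders by $\mu\preceq\lambda$ iff $\lambda-\mu\in Q$ and $\mathrm{Conv}\,[\mu]\subseteq\mathrm{Conv}\,[\lambda]$, where $[\lambda]=\{y\mid y_+=\lambda_+,\ w_y\le w_\lambda\}$ is controlled by the Bruhat order. This inductive hypothesis carries geometric information — the off-diagonal points lie in $\mathrm{Conv}\,[s_j\lambda]$, not merely have small total violation — and then every point produced by applying $T_j$ (reflections $s_j\mu$ as well as string points) lies in the convex hull of $\mathrm{Conv}\,[s_j\lambda]\cup s_j(\mathrm{Conv}\,[s_j\lambda])$, which is contained in $\mathrm{Conv}\,[\lambda]$ because $[s_j\lambda]\cup s_j([s_j\lambda])\subseteq[\lambda]$ (a Bruhat-order fact using $w_{s_j\lambda}s_j=w_\lambda$). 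To repair your argument you would either adopt such a convex-hull/Bruhat-type order, or strengthen your inductive hypothesis to include a geometric localization of the support of the expansion; the scalar invariant $\mathrm{ht}$ alone does not suffice.
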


Proposition \ref{intertwining:prp} is a direct consequence of  Lemma \ref{triangular:prp} below. 
 Given $v,w\in W$ we recall that $v\le w$ in the \emph{Bruhat partial order} on $W$ if $v$ may be obtained by deleting simple reflections from the reduced expression of $w$ (see \cite[Sec. 2.3]{mac:affine}).
For $x\in V$, we denote by $[x]$ the finite set $\{ y\in V\mid y_+=x_+ \text{ and } w_y\le w_x\}$ and by $\text{Conv}\, [x]$ the convex hull of $[x]$.
Now we consider the following partial order $\preceq$ on $P$:
\begin{equation}\label{br-order-P}
\forall \mu,\lambda\in P\quad \mu\preceq \lambda\ \text{iff}\ \text{(i)}\ \lambda-\mu\in Q\ \text{and}\
\text{(ii)}\ \text{Conv}\, [ \mu ]\subseteq \text{Conv}\,[ \lambda ].
\end{equation}

\begin{lemma}\label{triangular:prp}
The action of  $\mathcal{J}$ is triangular  with respect to the above partial order:
\begin{equation}\label{triangular}
(\mathcal{J}f)(\lambda )= \sum_{\mu\in P,\, \mu \preceq \lambda} J_{\lambda,\mu} f(\mu),
\qquad (f\in \mathcal C(P),\lambda\in P)
\end{equation}
for some coefficients $J_{\lambda,\mu}\in\mathbb C$ and with $J_{\lambda ,\lambda}=t[\lambda]^{-1}$.
\end{lemma}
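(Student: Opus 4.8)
The plan is to prove the triangularity of $\mathcal{J}$ by analyzing the defining formula $(\mathcal{J}f)(\lambda)=t[\lambda]^{-1}(T_{w_\lambda}f)(\lambda_+)$ through a reduced expression for $w_\lambda$, tracking how each simple reflection $T_j$ acts via the integral-reflection representation \eqref{Ia}--\eqref{Ij}. First I would fix $\lambda\in P$ and a reduced expression $w_\lambda=s_{j_1}\cdots s_{j_\ell}$, so that $T_{w_\lambda}=T_{j_1}\cdots T_{j_\ell}$. The key observation is that each $T_j=t_js_j+(t_j-1)J_j$ sends a point-evaluation functional $f\mapsto f(x)$ to a finite linear combination of evaluations $f\mapsto f(y)$ where each $y$ lies in $\mathrm{Conv}\,[x]$: indeed $s_j$ moves $x$ to $s_jx$ (which stays in the same $W$-orbit, hence has the same $x_+$), while the operator $J_j$ produces evaluations at points $x-k\alpha_j$ (or $x+k\alpha_j$) interpolating between $x$ and $s_jx$ along the root string, all of which lie on the segment $[x,s_jx]\subseteq\mathrm{Conv}\,[x]$.

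The main technical step is therefore to establish, by induction on $\ell$, a lattice-walk description: applying $T_{j_m}\cdots T_{j_\ell}$ to the evaluation at $\lambda_+$ yields evaluations at points obtained by successively applying affine reflections and sliding along root strings. I would show that at each stage the accumulated set of evaluation points is contained in $\mathrm{Conv}\,[\lambda]$. Here the crucial input is the Bruhat-order characterization of the set $[\lambda]=\{y\mid y_+=\lambda_+,\ w_y\le w_\lambda\}$: when $J_{j_m}$ replaces a point $y$ by the intermediate lattice points along the root string toward $s_{j_m}y$, the associated shortest group elements $w_y$ are obtained by deleting reflections from $w_\lambda$, hence satisfy $w_y\le w_\lambda$ in the Bruhat order, so $y\in[\lambda]$ and a fortiori every resulting evaluation point lies in $\mathrm{Conv}\,[\lambda]$. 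Since each such $\mu$ also satisfies $\lambda-\mu\in Q$ (all the displacements $k\alpha_j$ lie in the root lattice $Q$), both conditions (i) and (ii) in \eqref{br-order-P} hold, giving $\mu\preceq\lambda$.

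For the diagonal coefficient I would track the unique term that produces an evaluation exactly at $\lambda$ itself. Starting from $f(\lambda_+)$, the only way to return to $\lambda$ after applying the full product $T_{w_\lambda}$ is to apply the reflection parts $t_{j}s_{j}$ throughout (each $J_j$-term strictly shrinks toward the interior of the root string and cannot by itself reconstruct the extreme point $\lambda$, which corresponds to the longest walk back along $w_\lambda^{-1}$). Composing these leading terms multiplies the scalars $t_{j_1}\cdots t_{j_\ell}=t_{w_\lambda}$, and moves $\lambda_+$ back to $w_\lambda^{-1}\lambda_+=\lambda$. After dividing by the prefactor $t[\lambda]^{-1}$ one must check that $t[\lambda]^{-1}t_{w_\lambda}$ collapses to $t[\lambda]^{-1}$ — equivalently, that the pure-reflection contribution contributes coefficient exactly $t[\lambda]^{-1}$, which follows once one identifies $t_{w_\lambda}$ with the product $\prod_{a\in R[\lambda]}t_{a'}=t[\lambda]$ using the standard correspondence between the inversion set of $w_\lambda$ and $R[\lambda]=\{a\in R^+\mid a(\lambda)<0\}$.

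The hard part will be the bookkeeping in the inductive step that simultaneously controls \emph{both} order conditions while isolating the diagonal term. Tracking condition (i), $\lambda-\mu\in Q$, is routine since all steps displace by root-lattice vectors. The genuine obstacle is condition (ii): one must argue that the convex-hull containment $\mathrm{Conv}\,[\mu]\subseteq\mathrm{Conv}\,[\lambda]$ is preserved, which hinges on correctly matching the root strings appearing in $J_j$ with Bruhat-order deletions in $w_\lambda$. I expect the cleanest route is to prove the stronger combinatorial statement that every evaluation point arising in the expansion actually belongs to the finite set $[\lambda]$, and then invoke $[\mu]\subseteq[\lambda]$ (whence $\mathrm{Conv}\,[\mu]\subseteq\mathrm{Conv}\,[\lambda]$) directly from the Bruhat characterization, rather than reasoning about convex hulls geometrically.
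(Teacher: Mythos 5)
Your overall strategy (peel one simple reflection at a time off a reduced word for $w_\lambda$ and track how point evaluations propagate through the integral-reflection operators) is in spirit the same induction the paper uses, but your computation of the diagonal coefficient contains a fatal error. You claim that the only way to land back at $\lambda$ is through the pure-reflection parts $t_js_j$, because ``each $J_j$-term strictly shrinks toward the interior of the root string and cannot reconstruct the extreme point.'' This is false: by Eq.~\eqref{Ij}, when $a_j(\mu)>0$ the sum in $(J_jf)(\mu)$ runs up to $k=a_j(\mu)$, i.e.\ it reaches the far endpoint $\mu-a_j(\mu)\alpha_j=s_j\mu$ of the root string (and when $a_j(\mu)<0$ it starts at $k=0$, i.e.\ at $\mu$ itself). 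Concretely, since $a_j\in R[\lambda]$ forces $a_j(s_j\lambda)>0$, one finds
\begin{equation*}
\text{coefficient of } f(\lambda)\ \text{in}\ (T_jf)(s_j\lambda)\;=\;t_j+(t_j-1)(-1)\;=\;1,
\end{equation*}
not $t_j$: the $J_j$ endpoint term cancels part of the reflection term. Iterating, the coefficient of $f(\lambda)$ in $(T_{w_\lambda}f)(\lambda_+)$ equals $1$, and the prefactor then yields $J_{\lambda,\lambda}=t[\lambda]^{-1}$, which is exactly the paper's recursion $J_{\lambda,\lambda}=t_j^{-1}J_{s_j\lambda,s_j\lambda}$. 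Your accounting instead produces the pure-reflection contribution $t[\lambda]^{-1}t_{w_\lambda}$, and your attempted reconciliation --- that ``$t[\lambda]^{-1}t_{w_\lambda}$ collapses to $t[\lambda]^{-1}$'' --- is inconsistent with the identity $t_{w_\lambda}=t[\lambda]$ that you yourself invoke: it would force $t[\lambda]=1$. As written, your argument proves $J_{\lambda,\lambda}=1$, the wrong constant.

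A second, independent gap is your proposed ``cleanest route'' for condition (ii) of \eqref{br-order-P}: the claim that every evaluation point $\mu$ arising in the expansion lies in the finite set $[\lambda]$, so that $[\mu]\subseteq[\lambda]$. The interior root-string points produced by $J_j$ are in general not in the $W$-orbit of $\lambda_+$ at all (e.g.\ in rank one with $a_j(\lambda)=-2$, the mid-string point lies on a reflection hyperplane and is its own dominant representative, whereas $\lambda_+\neq$ this point); for such $\mu$ one has $\mu_+\neq\lambda_+$, hence $[\mu]\cap[\lambda]=\emptyset$ and the inclusion $[\mu]\subseteq[\lambda]$ can never hold. Relatedly, your ``key observation'' that $T_j$ maps an evaluation at $x$ into evaluations inside $\mathrm{Conv}\,[x]$ fails whenever $\ell(w_xs_j)>\ell(w_x)$ --- already at the first step of your walk, starting from $\lambda_+$, one has $[\lambda_+]=\{\lambda_+\}$ while $T_{j_1}$ produces evaluations away from $\lambda_+$. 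The order $\preceq$ is formulated through convex hulls precisely because of this, and the paper secures the support condition differently: it peels from the right, writing $(\mathcal Jf)(\lambda)=t_j^{-1}\bigl(\mathcal J(T_jf)\bigr)(s_j\lambda)$ and applying the induction hypothesis to the function $T_jf$, then using the inclusion $[s_j\lambda]\cup s_j([s_j\lambda])\subseteq[\lambda]$ (valid because $w_{s_j\lambda}=w_\lambda s_j<w_\lambda$) together with convexity. Both gaps are repairable by switching to that recursion, but the proposal as it stands computes the wrong diagonal entry and rests the triangularity on a false combinatorial claim.
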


\begin{proof}
We observe that $t[\lambda]=t_{w_\lambda}$ and proceed inductively in the length of $w_\lambda$.
For $\ell (w_\lambda )=0$ clearly $(\mathcal{J}f)(\lambda) =f(\lambda)$.
Next, assuming $\ell(w_\lambda)>0$ let $j$ be such that $w_\lambda s_j <w_\lambda$. Hence
\begin{align} \nonumber
(\mathcal{J}f)(\lambda) =&\, t[\lambda]^{-1}(T_{w_{\lambda}}  f)(\lambda_+)=
t_j^{-1}t[s_j\lambda]^{-1}
(T_{w_{s_j\lambda}} T_{j} f)((s_j\lambda )_+) 
\\ 
 \stackrel{\text{IH}}{=}&
\, t_j^{-1}\sum_{ \mu\in P,\,\mu \preceq s_j\lambda} J_{s_j\lambda ,\mu} (T_{j} f) (\mu )
= 
\sum_{\mu\in P,\, \mu \preceq \lambda} J_{\lambda ,\mu} f(\mu ),
\label{leading:coeff}
\end{align}
where the step IH hinges on the induction hypothesis and the last equality is due to the fact 
that the convex hull of
$ \text{Conv}\, [s_j\lambda ]$ and $s_j(\text{Conv}\, [s_j\lambda ] )$ is contained in $\text{Conv}\, [\lambda]  $  since  $ [s_j\lambda]\cup s_j([s_j\lambda])\subseteq [\lambda]$.

The diagonal coefficient of $$\sum_{ \mu\in P,\,\mu \preceq s_j\lambda} J_{s_j\lambda ,\mu} (T_{j}  f) (\mu )$$ corresponds to term with  $\mu=s_j\lambda$ and the coefficient of $f(\lambda )$ in
$(T_j   f)(s_j\lambda)$  is equal to   $1$ when $s_j\lambda \prec \lambda$ (by Eqs. \eqref{Ia}, \eqref{Ij}).  Hence, upon comparing the coefficients of $f(\lambda)$ on both sides of \eqref{leading:coeff} it is seen that $J_{\lambda,\lambda}=t_j^{-1} J_{s_j\lambda ,s_j\lambda}$, which proves the lemma.	
 \end{proof}

To prepare for the next section, we finish with a convenient characterization of the $W$-invariant subspace of $\mathcal{C}(P)$ in terms of $H$ and $\mathcal{J}$.

\begin{lemma}\label{WRinv}
The $W$-invariant subspace
\begin{equation}\label{WRinv1}
\mathcal{C}(P)^{W}:=\{ f\in\mathcal{C}(P)\mid wf=f,\ w\in W\}
\end{equation}
consists of the functions $f:\mathcal{C}(P)\to\mathbb{C}$ that satisfy
$$\mathcal J T_j\mathcal J^{-1}f=t_j   f \qquad (j=0,\dots,n).$$
\end{lemma}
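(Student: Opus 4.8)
The plan is to compute the conjugated operator $\mathcal{J}T_j\mathcal{J}^{-1}$ explicitly and read off from it the eigenvalue condition. Since $\mathcal{J}$ is invertible by Proposition~\ref{intertwining:prp}, the system $\mathcal{J}T_j\mathcal{J}^{-1}f=t_jf$ ($j=0,\dots,n$) is equivalent to $T_jg=t_jg$ ($j=0,\dots,n$) for $g:=\mathcal{J}^{-1}f$, so it suffices to show that $f=\mathcal{J}g$ is $W$-invariant iff $g$ lies in the joint $t_j$-eigenspace of all the $T_j$. One direction is immediate: if $T_jg=t_jg$ for all $j$, then $T_wg=t_wg$ for every $w\in W$ by length-multiplicativity of $t$, whence $(\mathcal{J}g)(\lambda)=t[\lambda]^{-1}(T_{w_\lambda}g)(\lambda_+)=g(\lambda_+)$ (using $t[\lambda]=t_{w_\lambda}$), a function manifestly constant on $W$-orbits and hence in $\mathcal{C}(P)^{W}$. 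The content is the converse, for which I would establish a closed pointwise formula for $\mathcal{J}T_j\mathcal{J}^{-1}$ on an arbitrary $f=\mathcal{J}g$.

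The key step is an intertwining identity. Starting from $(\mathcal{J}T_jg)(\lambda)=t[\lambda]^{-1}(T_{w_\lambda}T_jg)(\lambda_+)$, I would split according to the sign of $a_j(\lambda)$, using the standard fact that $\ell(w_\lambda s_j)=\ell(w_\lambda)\pm 1$ with the sign governed by $\mathrm{sign}\,a_j(\lambda)$ (via $a_j(\lambda)=(w_\lambda a_j)(\lambda_+)$ and the nonnegativity of positive affine roots on $A_c$), together with the combinatorial identity $w_{s_j\lambda}=w_\lambda s_j$, and correspondingly $t[s_j\lambda]=t[\lambda]\,t_j^{\pm 1}$. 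This identity follows from the uniqueness of the shortest element $w_x$ applied to both $\lambda$ and $s_j\lambda$, since $a_j(s_j\lambda)=-a_j(\lambda)$. For $a_j(\lambda)>0$ one has $T_{w_\lambda}T_j=T_{w_\lambda s_j}$, giving $(\mathcal{J}T_jg)(\lambda)=t_j f(s_j\lambda)$; for $a_j(\lambda)<0$ the quadratic relation gives $T_{w_\lambda}T_j=(t_j-1)T_{w_\lambda}+t_jT_{w_\lambda s_j}$ and hence $(\mathcal{J}T_jg)(\lambda)=(t_j-1)f(\lambda)+f(s_j\lambda)$.

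The delicate case, and the main obstacle, is the wall case $a_j(\lambda)=0$, where $s_j\lambda=\lambda$ so the identity $w_{s_j\lambda}=w_\lambda s_j$ breaks down. Here the reflection $s_b:=w_\lambda s_j w_\lambda^{-1}$ fixes $\lambda_+$, hence lies in the standard parabolic $W_{\lambda_+}=\mathrm{Stab}_W(\lambda_+)$ generated by $\{s_i\mid a_i(\lambda_+)=0\}$. Because $w_\lambda$ is the minimal-length representative of the coset $W_{\lambda_+}w_\lambda$, one has $\ell(s_bw_\lambda)=\ell(s_b)+\ell(w_\lambda)$; as $s_bw_\lambda=w_\lambda s_j$ and right multiplication by $s_j$ changes the length by exactly one, this forces $\ell(s_b)=1$, i.e. $s_b=s_i$ is simple with $a_i(\lambda_+)=0$ and $t_i=t_j$ (since $\alpha_i\in W_0\alpha_j$). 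Combined with the elementary observation that $a_i(\lambda_+)=0$ forces $(T_ih)(\lambda_+)=t_ih(\lambda_+)$ for every $h$ (then $s_i\lambda_+=\lambda_+$ and $J_i$ vanishes at $\lambda_+$), applied to $h=T_{w_\lambda}g$ after writing $T_{w_\lambda s_j}=T_{s_i}T_{w_\lambda}$, this yields $(\mathcal{J}T_jg)(\lambda)=t_j f(\lambda)$.

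Assembling the three cases, the eigenvalue equation $(\mathcal{J}T_j\mathcal{J}^{-1}f)(\lambda)=t_jf(\lambda)$ reduces in every case to the single condition $f(s_j\lambda)=f(\lambda)$ (which holds automatically on the walls). Requiring this for all $\lambda\in P$ gives $s_jf=f$, and requiring it for all $j=0,\dots,n$ gives $wf=f$ for all $w\in W$, since $s_0,\dots,s_n$ generate $W$; conversely $W$-invariance returns all these relations. This establishes that $f\in\mathcal{C}(P)^{W}$ if and only if $\mathcal{J}T_j\mathcal{J}^{-1}f=t_jf$ for $j=0,\dots,n$, as claimed.
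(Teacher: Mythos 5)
Your proof is correct and takes essentially the same route as the paper's: conjugate each $T_j$ by $\mathcal{J}$, compute pointwise using $\ell(w_\lambda s_j)=\ell(w_\lambda)\pm 1$, the Hecke multiplication rules for $T_{w_\lambda}T_j$, and the identity $w_{s_j\lambda}=w_\lambda s_j$, thereby reducing the eigenvalue equations to $f(s_j\lambda)=f(\lambda)$ for all $\lambda$ and $j$. The only difference is presentational: the paper compresses the case analysis into the single formula $(\mathcal{J}T_j\mathcal{J}^{-1}f)(\lambda)=t_jf(\lambda)+t_j^{\chi(a_j(\lambda))}\bigl(f(s_j\lambda)-f(\lambda)\bigr)$ and hides the wall case $a_j(\lambda)=0$ inside an unproved ``observation'', whereas you justify that case in full via the minimal-coset-representative argument showing $w_\lambda s_jw_\lambda^{-1}$ is a simple reflection fixing $\lambda_+$.
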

\begin{proof}
For any $f\in \mathcal C(P)$, $j\in \{0,\ldots n\}$ and $\lambda\in P$ we have
\begin{align*}
(\mathcal{J} T_j  \mathcal J^{-1} & f)(\lambda ) {=} t[\lambda]^{-1} ( T_{w_\lambda} T_j\mathcal J^{-1} f)( \lambda_+ )  \\
{=} &
\begin{cases}
t[\lambda]^{-1}
  (T_{w_\lambda s_j}\mathcal J^{-1} f)( \lambda_+ ) &\text{if}\   a_j (\lambda)\geq 0 \\
 t[\lambda]^{-1} 
 \left( t_j (T_{w_\lambda s_j} \mathcal J^{-1} f)( \lambda_+ )
 +(t_j-1)
 ( T_{w_\lambda}\mathcal J^{-1} f)( \lambda_+ )  \right) &\text{if}\   a_j (\lambda)< 0
 \end{cases} \\
 =  &t_j  t[\lambda]^{-1} (T_{w_\lambda}\mathcal J^{-1} f)(\lambda_+ ) \\
& +
t_j^{\chi(a_j(\lambda ))}
\left(
t^{-1}_{w_\lambda s_j} (T_{w_\lambda s_j}\mathcal J^{-1} f)( \lambda_+ ) -
t[\lambda]^{-1} (T_{w_\lambda}\mathcal J^{-1} f)( \lambda_+ )
\right) \\
{=} & t_j f(\lambda) + t_j^{\chi(a_j(\lambda ))} \left( f(s_j\lambda)-f(\lambda) \right).
\end{align*}
Here $\chi$ denotes the characteristic function of $[0,\infty)$ and we also used that 
$$T_wT_j =
\begin{cases}
 T_{ws_j}  & \text{if $\ell(ws_j)=\ell(w)+1$},\\
 t_j T_{ws_j} +(t_j-1)T_w & \text{if $\ell(ws_j)=\ell(w)-1$},
\end{cases} \\$$
 the relation $\ell (w_\lambda s_j)=\ell (w_\lambda )+1$ if $a_j(\lambda)\ge 0$, $\ell (w_\lambda s_j)=\ell (w_\lambda )-1$ if $a_j(\lambda) < 0$, and  the observation that 
\begin{equation*}
t^{-1}_{w_\lambda s_j} (T_{w_\lambda s_j} f)( \lambda_+ )=t[s_j\lambda]^{-1} (T_{w_{ s_j\lambda}} f)( \lambda_+ ) .
\end{equation*}

Hence, $f$ is $W$-invariant if and only if $\mathcal{J} T_j\mathcal J^{-1} f {=} t_j f$.
\end{proof}

\section{Periodic Macdonald spherical functions}\label{sec6}

For a  $\xi\in V$ we define the \emph{affine Macdonald spherical functions}  function in $\mathcal{C}(P)$:

\begin{equation}\label{msf}
\Phi_\xi := \mathcal{J}\phi_\xi\quad\text{with}\quad \phi_\xi := \sum_{v\in W_0} T_v e^{i\xi} ,
\end{equation}
where $e^{i\xi}$ denotes the plane wave function $e^{i\xi}(\lambda ):=e^{i\langle\lambda ,\xi\rangle}$ ($\lambda\in P$).

The plane waves decomposition for $\phi_{{\xi}}$ \eqref{msf} in the next theorem is a known result, see  \cite[Thm. 1]{mac:spherical1} and (with more details) \cite[(4.1.2)]{mac:spherical} or also \cite[Thm. 2.9(a)]{nel-ram:kostka} and \cite[Thm. 6.9]{par:buildings}). To keep our presentation self contained we include a brief verification based on the representation from Proposition~\ref{integral-reflection:thm}.
For any $w\in W$ we define the finite set 
$$R(w):=R^+ \cap w^{-1}(R^-)$$
with $R^-=-R^+=R\backslash R^+$. 
Let us observe that the cardinality of $R(w)$ is equal to the length of $w$ 
and that for any $\lambda\in P$ we have that $R(w_\lambda)=R[\lambda]$ 
(the reader may consult \cite[Section 2.2]{mac:affine}
and \cite[(2.4.4)]{mac:affine}, respectively).
It is also clear that for any $w\in W_0$ one has $R(w)=R_0^+ \cap w^{-1}(R_0^-)$ and where (recall) $R_0^-= -R^+_0$.
\begin{proposition}\label{pw}
The function $\phi_\xi$, for
\begin{equation}\label{Vreg}
\xi\in V_{\text{reg}}:=\{ \xi \in V\mid \langle \xi ,\alpha \rangle \not\in 2\pi\mathbb{Z},\, \forall \alpha\in R_0^+\} ,
\end{equation}
 decomposes as the following linear combination of plane waves
\label{cf-decompositions}
\begin{equation}\label{cf-decomposition}
  \phi_{{\xi}} =\sum_{w\in W_0} C(w{\xi })\mathbf{e}^{iw {\xi}} ,
  \end{equation}
with
\begin{equation}\label{cfun} 
 C({{\xi}}):= 
 \prod_{\alpha\in R_0^+} 
   \frac{ 1-t_{\alpha} e^{-i\langle \xi,\alpha\rangle}}  { 1-e^{-i\langle \xi,\alpha\rangle}}  .
\end{equation}
In particular, $\phi_\xi(\lambda)=M_\lambda(\xi)$ $(\lambda\in P^+, \xi \in V_{reg}).$
\end{proposition}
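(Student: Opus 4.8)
The plan is to evaluate $\phi_\xi$ by computing the action of the finite Hecke generators $T_1,\dots,T_n$ on plane waves and then exploiting that $\phi_\xi=\sum_{v\in W_0}T_ve^{i\xi}$ is a joint eigenfunction. The one genuinely computational ingredient is the Demazure--Lusztig formula: for $j\in\{1,\dots,n\}$ and $\eta\in V$,
\begin{equation*}
T_j e^{i\eta}=A_j(\eta)\,e^{i\eta}+B_j(\eta)\,e^{is_j\eta},\qquad A_j(\eta)=\frac{(1-t_j)z}{1-z},\quad B_j(\eta)=\frac{t_j-z}{1-z},\quad z=e^{-i\langle\alpha_j,\eta\rangle}.
\end{equation*}
I would prove this directly from $T_j=t_js_j+(t_j-1)J_j$ by evaluating at $\lambda\in P$ and summing the geometric series that defines $(J_je^{i\eta})(\lambda)$ in \eqref{Ij}; splitting into the cases $a_j(\lambda)=\langle\lambda,\alpha_j^\vee\rangle>0$, $=0$, $<0$ one checks in each case that the $\lambda$-dependence reorganizes into the two plane waves $e^{i\eta}$ and $e^{is_j\eta}$ with the stated $\lambda$-independent coefficients (note $A_j+B_j=t_j$, so $t_j-A_j(\eta)=B_j(\eta)$). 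This elementary but somewhat fiddly case analysis is the main obstacle; everything downstream is bookkeeping.

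The formula shows that $\mathcal E_\xi:=\mathrm{span}_{\mathbb C}\{e^{iw\xi}\mid w\in W_0\}$ is invariant under $H_0$, so $\phi_\xi\in\mathcal E_\xi$; write $\phi_\xi=\sum_{w\in W_0}c_w\,e^{iw\xi}$. A standard computation in $H_0$ gives $T_j\bigl(\sum_{v\in W_0}T_v\bigr)=t_j\sum_{v\in W_0}T_v$ (split the sum according to whether $\ell(s_jv)$ exceeds $\ell(v)$ and use the quadratic relation \eqref{quadratic-rel}), whence $T_j\phi_\xi=t_j\phi_\xi$ for all $j$ by Proposition~\ref{integral-reflection:thm}. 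Substituting the Demazure--Lusztig formula and comparing coefficients of the (for $\xi\in V_{\mathrm{reg}}$ linearly independent) plane waves yields, for every $w$, the relation $c_{s_jw}\,B_j(s_jw\xi)=\bigl(t_j-A_j(w\xi)\bigr)c_w=B_j(w\xi)\,c_w$, i.e.
\begin{equation*}
\frac{c_{s_jw}}{c_w}=\frac{B_j(w\xi)}{B_j(s_jw\xi)}=\frac{t_j-z}{1-t_jz},\qquad z=e^{-i\langle\alpha_j,w\xi\rangle}.
\end{equation*}
Since $|t_j z|<1$ and $\xi\in V_{\mathrm{reg}}$ the denominators never vanish, so these ratios determine every $c_w$ from a single value; in particular the joint $t_j$-eigenspace of $\mathcal E_\xi$ is one-dimensional.

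Next I would check that $w\mapsto C(w\xi)$ solves the same recursion. Using the $W_0$-invariance of $t$ together with the fact that $s_j$ permutes $R_0^+\setminus\{\alpha_j\}$ and sends $\alpha_j\mapsto-\alpha_j$, a short manipulation of the product \eqref{cfun} gives $C(s_j\eta)/C(\eta)=(t_j-z)/(1-t_jz)$ with $z=e^{-i\langle\alpha_j,\eta\rangle}$, matching the ratio above at $\eta=w\xi$. It then remains to fix the normalization by matching one coefficient, for which the longest element $w_0$ is convenient: by the Bruhat-triangularity of the $T_v$-action on plane waves (leading term $e^{iv\xi}$, analogous to Lemma~\ref{triangular:prp}), the plane wave $e^{iw_0\xi}$ occurs only through $T_{w_0}e^{i\xi}$, so $c_{w_0}=\prod_{k=1}^{N}B_{j_k}(s_{j_{k+1}}\cdots s_{j_N}\xi)$ for a reduced word $w_0=s_{j_1}\cdots s_{j_N}$.

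Finally, since $\{\,s_{j_N}\cdots s_{j_{k+1}}\alpha_{j_k}\,\}_{k=1}^{N}=R_0^+$ and $t_{\alpha_{j_k}}=t_{\beta_k}$ by $W_0$-invariance, this product telescopes to $c_{w_0}=\prod_{\alpha\in R_0^+}\frac{t_\alpha-e^{-i\langle\alpha,\xi\rangle}}{1-e^{-i\langle\alpha,\xi\rangle}}=C(w_0\xi)$. Because $c_w$ and $C(w\xi)$ obey the same recursion and agree at $w_0$, they agree for all $w\in W_0$, proving $\phi_\xi=\sum_{w\in W_0}C(w\xi)e^{iw\xi}$. The last assertion is then immediate: for $\lambda\in P^+$ and $\xi\in V_{\mathrm{reg}}$,
\begin{equation*}
\phi_\xi(\lambda)=\sum_{w\in W_0}C(w\xi)\,e^{i\langle\lambda,w\xi\rangle}=\sum_{w\in W_0}C(w\xi)\,e^{i\langle w\xi,\lambda\rangle}=M_\lambda(\xi)
\end{equation*}
by \eqref{msf-decomposition}.
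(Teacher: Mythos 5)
Your proposal is correct and follows essentially the same route as the paper's own proof: the Demazure--Lusztig formula for $T_j$ acting on plane waves (the paper's Eq.~\eqref{I-plane}, with your $A_j(\eta),B_j(\eta)$ equal to its $\mathrm{b}_j(-\eta),\mathrm{c}_j(-\eta)$), the eigenvalue property $T_j\phi_\xi=t_j\phi_\xi$ via the idempotent $\imath_0$, comparison of coefficients of the linearly independent plane waves to get the recursion, verification that $C(w\xi)$ satisfies the same recursion, and normalization at $w_0$ via Bruhat triangularity and the telescoping product over $R_0^+$. The only difference is cosmetic (you derive the recursion before the $w_0$ normalization, the paper in the opposite order), so there is nothing to add.
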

\begin{proof}

From the action of $T_j$ ($j=1,\dots,n$) \eqref{Ia} we have  that for any $\boldsymbol{\xi}\in V_{\text{reg}}$ 
\begin{subequations}
\begin{equation}\label{I-plane}
\begin{split}
 T_j \mathbf{e}^{i{\xi }} = \text{b}_j(s_j\boldsymbol{\xi}) \mathbf{e}^{i\boldsymbol{\xi}}  +\text{c}_j(s_j\boldsymbol{\xi}) \mathbf{e}^{i s_j \boldsymbol{\xi}} = \text{b}_j(-\boldsymbol{\xi}) \mathbf{e}^{i\boldsymbol{\xi}}  +\text{c}_j(-\boldsymbol{\xi}) \mathbf{e}^{i s_j \boldsymbol{\xi}} ,
\end{split}
\end{equation}
with
\begin{align*}
\text{c}_j(\boldsymbol{\xi}) &=  \frac{ 1-t_{j} e^{-i\langle \xi,\alpha_j\rangle}}  { 1-e^{-i\langle \xi,\alpha_j\rangle}},&
\text{b}_j(\boldsymbol{\xi}) &= t_j-\text{c}_j(\boldsymbol{\xi})= \text{c}_j(-\boldsymbol{\xi})-1.
\end{align*}
Since the stabilizer of ${\xi}\in V_{\text{reg}}$ for the action of $W_0\ltimes2\pi Q^\vee$  is trivial, all the vectors $w{\xi}$, for  $w\in W_0$,  are different to each other modulo $2\pi Q^\vee$. Then, for any ${\xi}\in V_{\text{reg}}$ the plane waves ${e}^{iw\boldsymbol{\xi}}$,  $w\in W_0$, are linearly independent in $\mathcal{C}(P)$. Therefore, the function $\phi_\xi$ may be written as
\begin{equation}\label{pw:exp}
  \phi_{\boldsymbol{\xi}}=\sum_{w\in W_0} C_w(\boldsymbol{\xi})\mathbf{e}^{iw{\xi }},
\end{equation}
for some unique coefficients $C_w(\boldsymbol{\xi})\in\mathbb{C}$.

It follows from Eq. \eqref{I-plane} that for any reduced expression $w=s_{j_\ell}\dots s_{j_1}\in W_0$ the action of $T_w$ on $e^{i\xi}$ is of the form
 \begin{equation}\label{lo}
T_w  \mathbf{e}^{i{\xi }} =\left( \prod_{1\leq k\leq \ell} c_{j_k}(s_{j_k}\cdots s_{j_2}s_{j_1}{\xi}) \right) {e}^{i w{\xi }} \quad +\text{l.o.},
\end{equation}
\end{subequations}
for some coefficients $c_{j_k}$ and  $\text{l.o.}$ is a linear combination of plane waves $\mathbf{e}^{i v{\xi }}$ with $v<w$ in the Bruhat partial order on $W_0$.

Let $w_0$ be the longest element of $W_0$.
Applying the above identity to a reduced expression $w_0=s_{j_\ell}\dots s_{j_1}$ (so $\ell=\# R_0^+$) we conclude that
\begin{align*}
C_{w_0}(\xi)=& \prod_{1\leq k\leq \ell} c_{j_k}(-s_{j_{k-1}}\cdots s_{j_2}s_{j_1}\boldsymbol{\xi}) 
=\prod_{1\leq k\leq \ell} \frac{1-t_{j_k} e^{i\langle \xi,s_{j_1}  s_{j_2} \cdots s_{j_{k-1}} \alpha_{j_k}\rangle}}{1- e^{i\langle \xi,s_{j_1}  s_{j_2} \cdots s_{j_{k-1}} \alpha_{j_k}\rangle}}\\
=&\prod_{\alpha\in R_0^+} \frac{1-t_{\alpha} e^{i\langle \xi, \alpha\rangle}}{1- e^{i\langle \xi, \alpha\rangle}}
=C(-\xi)=C(w_0\xi).
\end{align*}
In the last equality we have used that $R_0^+=R(w_0)$  and in the third that (see e.g. \cite[(2.2.9)]{mac:affine}) $R_0^+=R(w_0)=\{s_{j_1}  s_{j_2} \cdots s_{j_{k-1}} \alpha_{j_k}\mid k=1,2,\dots, \ell \}$.

Let us denote the trivial idempotent 
\begin{equation}\label{e}
{\bf \imath_0}:=\sum_{v\in W_0}  T_v,
\end{equation}
having then $\phi_\xi={\bf \imath_0}\, e^{i\xi}$.
Since $T_j {\bf \imath_0}=t_j {\bf \imath_0}$ we have that $T_j \phi_{{\xi}}=t_j \phi_{{\xi}}$ for $j=1,\ldots,n$. It follows from Eq. \eqref{I-plane} and the linear independence of the plane waves that for ${\xi}\in V_{\text{reg}}$
\begin{equation}\label{rr}
C_{s_jw}(\boldsymbol{\xi}) \text{c}_j(w\boldsymbol{\xi})= C_{w}(\boldsymbol{\xi}) \text{c}_j(-w\boldsymbol{\xi})
 \quad\text{for\ all}\ 
 w\in W_0,\  j\in\{1,\dots, n\} .
\end{equation}

On the other hand, from the product formula in Eq. \eqref{cfun} it follows that for any ${\xi}\in V_{\text{reg}}$ \begin{equation*}
C (s_j\boldsymbol{\xi})\text{c}_j(\boldsymbol{\xi})=
C(\boldsymbol{\xi}) \text{c}_j(-\boldsymbol{\xi})
 \quad\text{for\ all}\ 
  j\in\{1,\dots, n\} .
\end{equation*}
Hence, $C(w{\xi})$ also satisfies  the  recurrence relation in Eq. \eqref{rr}.
Finally, by downward  induction with respect to the Bruhat order starting from the initial condition
$C_{w_0}({\xi}) =C(w_0{\xi})$ (and using that  $c_j({\xi})\neq 0$), we conclude that 
$C_{w}({\xi}) =C(w{\xi})$ for all $w\in W_0$ and any ${\xi}\in V_{\text{reg}}$.
\end{proof}

Before stating the next results, let us recall that  the nodes $\mathcal P_c$  are given by the unique global minima stemming from the strictly convex Morse functions $\mathcal V_\mu$ \eqref{mf1}, $\mu\in \hat P_c$ \eqref{hpcc}. Given  $\mu$, the existence of the global minimum is guaranteed because $\mathcal V_\mu (\boldsymbol{\xi})$    is smooth and $\mathcal V_\mu({\xi})\to +\infty$ for ${\xi}\to\infty$.
Since $\xi_\mu$ is a minimum of $\mathcal{V}_\mu$, it is a solution for $\nabla \mathcal{V}_\mu =0$:
\begin{equation}\label{critical:eq}
c \xi_\mu+\hat\rho_v(\xi_\mu)
 =2\pi( \hat\rho+\mu)
 \qquad \text{where}\quad \hat\rho_v(\xi):= \sum_{\alpha\in R_0^+}v_\alpha(\langle  \xi , \alpha\rangle){\hat\alpha} .
\end{equation}

\begin{lemma}\label{lemma}
The  critical points $\xi_\mu$, ${\mu}\in \hat P_c$ are all distinct and belong to the open alcove  (with respect to the affine action of $W_0\ltimes2\pi Q^\vee$ on $V$) 
 \begin{equation}\label{Avee}
A=\{ \xi\in V \mid 0<\langle \xi ,\alpha\rangle< 2\pi,\,\forall\alpha\in R_0^+\} .
 \end{equation}
Moreover, the position of  $ \xi_\mu$ depends analytically on the parameters $t_{\alpha}\in (-1,1)$.
\end{lemma}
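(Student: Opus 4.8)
The plan is to analyze the critical point equation \eqref{critical:eq}, namely $c\xi_\mu + \hat\rho_v(\xi_\mu) = 2\pi(\hat\rho + \mu)$, and to extract three facts: that each $\xi_\mu$ lies in the open alcove $A$, that the map $\mu \mapsto \xi_\mu$ is injective on $\hat P_c$, and that $\xi_\mu$ depends analytically on the $t_\alpha$. The strict convexity and radial unboundedness of $\mathcal{V}_\mu$ already guarantee that $\xi_\mu$ exists and is the unique solution of \eqref{critical:eq}; the content here is locating it inside $A$ and proving injectivity across different $\mu$.

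For the \emph{alcove membership}, I would study the boundary behavior of $\mathcal{V}_\mu$. The key observation is that $v_\alpha(\mathrm{x})$ is built from the antiderivative of a Poisson-type kernel $\frac{1-t_\alpha^2}{1-2t_\alpha\cos(\mathrm{y})+t_\alpha^2}$, which for $t_\alpha \in (-1,1)$ is \emph{strictly positive} and $2\pi$-periodic with mean one; consequently $v_\alpha$ is a strictly increasing odd function with $v_\alpha(\mathrm{x}+2\pi)=v_\alpha(\mathrm{x})+2\pi$, and in particular $v_\alpha(0)=0$ and $v_\alpha(2\pi)=2\pi$. I would examine the gradient condition component-wise along each root direction. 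Pairing \eqref{critical:eq} with a positive root $\beta\in\hat R_0^+$ and using $\langle\hat\rho+\mu,\beta^\vee\rangle$-type bounds coming from $\mu\in\hat P_c$ (so $0\le\langle\mu,\alpha\rangle\le c$), one checks that the solution cannot reach a wall $\langle\xi,\alpha\rangle\in\{0,2\pi\}$: on such a wall the singular structure of $v_\alpha$ (its derivative stays finite but the balance in \eqref{critical:eq} forces $\langle\xi_\mu,\alpha\rangle$ strictly between $0$ and $2\pi$) rules out equality. Concretely, I would show $\mathcal{V}_\mu$ has no critical point on $\partial A$ and that its minimum is attained in the interior, so $\xi_\mu \in A$.

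For \emph{distinctness}, I would use that the map $G:\xi \mapsto c\xi + \hat\rho_v(\xi)$ is injective on $V$. This follows because $G = \nabla\mathcal{V}_0 + \text{const}$ is the gradient of a strictly convex function (the Hessian $c\,\mathrm{Id} + \sum_\alpha v_\alpha'(\langle\xi,\alpha\rangle)\,\hat\alpha\otimes\alpha$ is positive definite since $c>0$ and each $v_\alpha' > 0$), and the gradient of a strictly convex function is injective. Since $G(\xi_\mu) = 2\pi(\hat\rho+\mu)$ and distinct $\mu\in\hat P_c$ give distinct values $2\pi(\hat\rho+\mu)$, injectivity of $G$ forces the $\xi_\mu$ to be pairwise distinct.

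For the \emph{analytic dependence on parameters}, I would invoke the implicit function theorem applied to the equation $F(\xi,t):=c\xi+\hat\rho_v(\xi)-2\pi(\hat\rho+\mu)=0$, viewed as a function of $(\xi,(t_\alpha))$. The kernel defining $v_\alpha$ is a real-analytic function of $(\mathrm{y},t_\alpha)$ jointly for $t_\alpha\in(-1,1)$, so $F$ is real-analytic in $(\xi,t)$; the partial Jacobian $\partial_\xi F$ equals the Hessian of $\mathcal{V}_\mu$, which I have just argued is positive definite, hence invertible. The analytic implicit function theorem then yields a unique real-analytic solution $\xi_\mu(t)$ in a neighborhood of each parameter value, and since the solution is globally unique by convexity these local branches patch into a single real-analytic map on all of $(-1,1)^{\#\{t_\alpha\}}$. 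The main obstacle I anticipate is the alcove-membership step: one must handle the interaction between the combinatorial constraints $0\le\langle\mu,\alpha\rangle\le c$ defining $\hat P_c$ and the nonlinear shift contributed by $\hat\rho_v$, and in particular verify carefully that the normalization $v_\alpha(0)=0$, $v_\alpha(2\pi)=2\pi$ together with the factor $\tfrac{2}{\langle\alpha,\hat\alpha^\vee\rangle}$ produces exactly the right bounds to keep $\langle\xi_\mu,\alpha\rangle$ off both walls for every $\alpha\in R_0^+$.
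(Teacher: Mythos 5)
Your arguments for \emph{distinctness} and for \emph{analytic dependence} are correct and essentially identical to the paper's: distinctness is immediate because $\mu$ can be read off from $\xi_\mu$ via \eqref{critical:eq} (you do not even need injectivity of $G:\xi\mapsto c\xi+\hat\rho_v(\xi)$; the mere fact that $G$ is a function already forbids $\xi_\mu=\xi_\lambda$ with $\mu\neq\lambda$), and the paper likewise concludes analyticity from the implicit function theorem applied to the critical equation, whose Jacobian $c\,\mathrm{Id}+\sum_{\alpha\in R_0^+}v_\alpha'(\langle\xi,\alpha\rangle)\,\hat\alpha\otimes\alpha$ is invertible.

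The genuine gap is in the alcove-membership step, which is the heart of the lemma and which you yourself flag as the anticipated obstacle. There are two problems. First, the framing ``show $\mathcal V_\mu$ has no critical point on $\partial A$ and that its minimum is attained in the interior'' is not by itself conclusive: $\xi_\mu$ is the global minimum over all of $V$, so an analysis of $\mathcal V_\mu$ on $\bar A$ settles nothing unless you add the convexity observation that an interior critical point of $\mathcal V_\mu|_{\bar A}$ is automatically the unique global minimizer; as written, nothing in your plan excludes the global minimum lying far outside $\bar A$. Second, and more seriously, the mechanism you invoke --- ``the singular structure of $v_\alpha$ \dots rules out equality'' at the walls --- does not exist: $v_\alpha$ is real-analytic across $\langle\xi,\alpha\rangle\in 2\pi\mathbb Z$, so there is no local barrier whatsoever at the walls. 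The exclusion is a \emph{quantitative global} inequality in which the constraint $0\le\langle\mu,\alpha\rangle\le c$ defining $\hat P_c$ \eqref{hpcc} enters with exact constants. Concretely, the paper (i) uses the oddness of $v_\alpha$ and the $W_0$-symmetry of $R_0$ to derive the reflection identity \eqref{rho-vb}, which together with \eqref{pos} and the strict monotonicity of $v_\alpha$ shows that $\langle\xi_\mu,\beta\rangle\le 0$ would make the left-hand side of \eqref{critical-beta:eq} nonpositive while its right-hand side is positive, whence $\langle\xi_\mu,\beta\rangle>0$ for all $\beta\in R_0^+$; and (ii) for the upper wall it takes $\beta=\varphi$ and combines the quasi-periodicity $v_\alpha(\mathrm{x}+2\pi)=v_\alpha(\mathrm{x})+2\pi$ with the Schur's-lemma identity \eqref{schur:eq} to evaluate the relevant constant exactly, showing that $\langle\xi_\mu,\varphi\rangle\ge 2\pi$ would force $c\langle\xi_\mu,\varphi\rangle+\langle\hat\rho_v(\xi_\mu),\varphi\rangle\ge 2\pi(1+c+\langle\hat\rho,\varphi\rangle)$ and hence $\langle\mu,\varphi\rangle\ge c+1$, contradicting $\mu\in\hat P_c$. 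Both computations --- the reflection identity and the Schur's-lemma evaluation of $\sum_{\alpha\in R_0^+}\langle\alpha,\varphi^\vee\rangle\langle\varphi,\alpha^\vee/m_\alpha\rangle$ --- are absent from your plan, and without them the assertion that the normalization of $v_\alpha$ ``produces exactly the right bounds'' remains unverified.
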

\begin{proof}
\begin{subequations}
From Eq. \eqref{critical:eq} it is clear that one can recover $\mu$ from the value of  $\xi_\mu$, thus $\xi_\mu\neq\xi_\lambda$ if $\mu\neq\lambda$. Also, for any $\beta\in R_0^+$ we have that 
\begin{equation}\label{critical-beta:eq}
c \langle \xi_\mu,\beta\rangle+\langle \hat\rho_v(\xi_\mu),\beta\rangle
 =2\pi\langle  \hat\rho+\mu ,\beta\rangle.
\end{equation}
Since $v_\alpha (\text{x})$ is an odd function it follows moreover that 
\begin{align}
\langle \hat\rho_v(\xi_\mu),\beta\rangle 
=
\frac{1}{2}\sum_{\substack{\alpha\in R_0 \\ \langle \alpha^\vee,\beta \rangle >0}}
\bigl( v_\alpha(\langle  \xi_\mu, \alpha\rangle)-v_\alpha (\langle  s_{\beta^\vee} \xi_\mu, \alpha\rangle) \bigr) \langle{\hat\alpha},\beta \rangle. \label{rho-vb}
\end{align}
From Eqs. \eqref{critical-beta:eq}, \eqref{rho-vb} one deduces that $\langle \xi_\mu,\beta\rangle >0$ for $\mu\in {\hat P}_{c}$. Here one exploits that 
$v_\alpha (\text{x})$ is strictly monotonously increasing and that
\begin{equation}\label{pos}
\langle \xi_\mu,\alpha\rangle-\langle s_{\beta^\vee} \xi_\mu,\alpha\rangle
=
\langle\xi_\mu,\beta\rangle\langle\alpha,\beta^\vee\rangle .
\end{equation}

Moreover,  from Eqs. \eqref{rho-vb}, \eqref{pos} with $\beta=\varphi$ and the quasi-periodicity of the function $v_\alpha (\text{x})$ one deduces that for $\langle \xi_\mu ,\varphi\rangle \ge 2\pi$ we would have
\begin{equation}\label{lhs-bound}
c\langle \xi_\mu,\varphi\rangle + \langle \hat\rho_v (\xi_\mu),\varphi\rangle\geq
2\pi c +\pi
\sum_{\alpha\in R_0^+} \langle \alpha,\varphi^\vee\rangle \langle \varphi,\frac{\alpha^\vee}{m_\alpha}\rangle
 {=} 2\pi (1+c+ \langle \hat\rho,\varphi\rangle ),
\end{equation}
where in the last connection we used that  for {\em any} root multiplicity function $t:R_0\to\mathbb C$ and root $\beta\in R_0$ we have
\begin{equation}\label{schur:eq}
\sum_{\alpha\in R_0^+} t_{\alpha} \langle \beta,{\alpha^\vee}\rangle \langle\alpha ,\beta^\vee\rangle =\frac{2}{n}\sum_{\alpha\in R_0} t_{\alpha}
\end{equation}
(which follows from the Schur's lemma, cf. the proof of \cite[\text{Lem}.~10.1]{ems-opd-sto:periodic} and \cite[\text{Rem}.~7.4]{die-ems:discrete}) and that 
$$\langle \hat\rho,\varphi\rangle +1=\frac1n\sum_{\alpha\in R_0}\frac{1}{m_\alpha}.$$
Now by combining the Eq. \eqref{lhs-bound}  with Eq. \eqref{critical-beta:eq} for $\beta=\varphi$, we would have
$$c+\langle \hat\rho,\varphi\rangle +1\leq \langle \hat\rho +\mu ,\varphi\rangle
 =\langle \mu ,\varphi\rangle + \langle \hat\rho,\varphi\rangle,$$
which contradicts our assumption that $\mu\in \hat P_{c}$. Hence, one must have that $\langle \xi_\mu ,\varphi\rangle<2\pi$, i.e. $\xi_\mu\in A$.

Finally, it is clear that the critical equation \eqref{critical:eq} is analytic in the parameters $t_{\alpha}\in (-1,1)$. Since the Jacobian of the critical equation is invertible,  the implicit function theorem now ensures that the dependence of the critical point $\xi_\mu$ is also analytic in these parameters.
\end{subequations}
 \end{proof}

\begin{proposition}[Periodic Macdonald Spherical Function] \label{inv}
For every ${\mu}\in \hat P_c$  the function  
$\Phi_{{\xi}_\mu}$  belongs to the $W$-invariant subspace $\mathcal{C}(P)^{W}$.
\end{proposition}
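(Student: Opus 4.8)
The plan is to deduce the $W$-invariance of $\Phi_{\xi_\mu}=\mathcal J\phi_{\xi_\mu}$ from the eigenfunction characterisation in Lemma~\ref{WRinv}, reducing everything to a single Hecke-eigenvalue equation for the affine generator $T_0$ that is enforced precisely by the critical-point equation \eqref{critical:eq}. By Lemma~\ref{WRinv} one has $\Phi_{\xi_\mu}\in\mathcal C(P)^{W}$ if and only if $\mathcal J T_j\mathcal J^{-1}\Phi_{\xi_\mu}=t_j\Phi_{\xi_\mu}$ for $j=0,\dots,n$; since $\mathcal J^{-1}\Phi_{\xi_\mu}=\phi_{\xi_\mu}$ and $\mathcal J$ is invertible (Proposition~\ref{intertwining:prp}), this is equivalent to the system $T_j\phi_{\xi_\mu}=t_j\phi_{\xi_\mu}$, $j=0,\dots,n$. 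For $j=1,\dots,n$ these relations hold for every $\xi$, because $\phi_\xi={\bf \imath_0}\,e^{i\xi}$ with ${\bf \imath_0}=\sum_{v\in W_0}T_v$ and $T_j{\bf \imath_0}=t_j{\bf \imath_0}$, exactly as used in the proof of Proposition~\ref{pw}. Thus the entire statement comes down to the affine relation $T_0\phi_{\xi_\mu}=t_0\phi_{\xi_\mu}$, and this is the only place where the defining property $\mu\in\hat P_c$ enters.

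To analyse $T_0\phi_{\xi_\mu}$ I would use the plane-wave decomposition $\phi_{\xi_\mu}=\sum_{w\in W_0}C(w\xi_\mu)e^{iw\xi_\mu}$ of Proposition~\ref{pw}, which applies because $\xi_\mu$ lies in the open alcove $A\subset V_{\text{reg}}$ by Lemma~\ref{lemma}. Feeding a plane wave into \eqref{Ia}--\eqref{Ij} and summing the resulting geometric series gives
\[
 s_0 e^{i\eta}=e^{-ic\langle\alpha_0,\eta\rangle}e^{is_{\alpha_0}\eta},\qquad
 T_0 e^{i\eta}=(1-t_0)\tfrac{q}{1-q}\,e^{i\eta}+\tfrac{t_0-q}{1-q}\,e^{-ic\langle\alpha_0,\eta\rangle}e^{is_{\alpha_0}\eta},
\]
with $q=e^{-i\langle\alpha_0,\eta\rangle}$ and $s_{\alpha_0}\in W_0$ the finite reflection in $\beta_0:=-\alpha_0\in R_0^+$. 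Since $s_{\alpha_0}$ permutes the regular plane waves $e^{iw\xi_\mu}$ in pairs, equating the coefficient of each $e^{iw\xi_\mu}$ on the two sides of $T_0\phi_{\xi_\mu}=t_0\phi_{\xi_\mu}$ collapses, after clearing denominators, to the family of scalar identities
\[
 \frac{C(s_{\alpha_0}\eta)}{C(\eta)}=\frac{t_0-q}{1-t_0q}\,e^{-ic\langle\alpha_0,\eta\rangle}\qquad(\eta=w\xi_\mu,\ w\in W_0).
\]

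The remaining task is to verify this family from \eqref{critical:eq}. Here I would evaluate the left-hand side directly from the product \eqref{cfun}: writing $g(\alpha)=\tfrac{1-t_\alpha e^{-i\langle\eta,\alpha\rangle}}{1-e^{-i\langle\eta,\alpha\rangle}}$, so that $C(\eta)=\prod_{\alpha\in R_0^+}g(\alpha)$, the reflection regroups the factors over the inversion set $\mathrm{Inv}(s_{\alpha_0})=R_0^+\cap s_{\alpha_0}R_0^-$, and the elementary phase identities
\[
 \frac{g(-\delta)}{g(\delta)}=-e^{-iv_\delta(\langle\eta,\delta\rangle)},\qquad
 \frac{t_0-q}{1-t_0q}=-e^{\,iv_{\beta_0}(\langle\beta_0,\eta\rangle)}
\]
(with $v_\alpha$ as in \eqref{mf1}) turn the desired identity into a purely additive relation among the phases $v_\delta$ and the affine term $c\langle\beta_0,\eta\rangle$, read modulo $2\pi$. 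The key combinatorial input is that the inversion-set sum reassembles into a gradient component,
\[
 \sum_{\delta\in\mathrm{Inv}(s_{\alpha_0})}v_\delta(\langle\eta,\delta\rangle)=\langle\hat\rho_v(\eta),\beta_0\rangle-v_{\beta_0}(\langle\beta_0,\eta\rangle),
\]
which rests on the fact that, for $\alpha\in R_0^+$, the pairing $\langle\hat\alpha,\beta_0\rangle$ equals the indicator of $\alpha\in\mathrm{Inv}(s_{\alpha_0})$ plus $\delta_{\alpha,\beta_0}$. Substituting the critical equation in the form $c\eta+\hat\rho_v(\eta)=2\pi\,w(\hat\rho+\mu)$ (valid for $\eta=w\xi_\mu$ because $\hat\rho_v$ is $W_0$-equivariant) then reduces both sides to $\pm1$: the match works out since the reflection length $\ell(s_{\alpha_0})$ is odd, contributing a sign $(-1)^{\ell(s_{\alpha_0})}$ from the inversion factors, while $\langle w(\hat\rho+\mu),\beta_0\rangle\in\mathbb Z$ (as $\beta_0=\vartheta(\hat R_0^\vee)$ pairs integrally with $\hat P$).

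I expect this last paragraph to be the main obstacle: establishing the combinatorial identity for the inversion-set sum and tracking the residual sign require the admissible-pair bookkeeping (the distinction between $\hat R_0=R_0^\vee$ and $\hat R_0=u_\varphi R_0$, the multiplicities $m_\alpha$, and the ``almost minuscule'' behaviour of $\beta_0$ for the pairing $\langle\hat\alpha,\beta_0\rangle$), and amount to identifying the affine Hecke-eigenvalue condition with the vanishing gradient of the Morse function $\mathcal V_\mu$. The reduction to the single relation $T_0\phi_{\xi_\mu}=t_0\phi_{\xi_\mu}$ and the computation of $T_0$ on plane waves are, by contrast, essentially formal.
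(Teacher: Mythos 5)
Your proposal is correct and follows essentially the same route as the paper's proof: reduction via Lemma~\ref{WRinv} and the idempotent identity $T_j{\bf \imath_0}=t_j{\bf \imath_0}$ to the single relation $T_0\phi_{\xi_\mu}=t_0\phi_{\xi_\mu}$, coefficient comparison in the plane-wave expansion of Proposition~\ref{pw} to obtain a Bethe-type identity for the ratio $C(s'_0v\xi_\mu)/C(v\xi_\mu)$ over the inversion set of the reflection in $-\alpha_0$, and verification of that identity from the critical equation \eqref{critical:eq} through $v_\alpha(\mathrm{x})=i\log\bigl((1-t_\alpha e^{i\mathrm{x}})/(e^{i\mathrm{x}}-t_\alpha)\bigr)$. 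The only difference is presentational: you work additively with logarithmic phases, the inversion-set sum, and the sign $(-1)^{\ell(s_{\alpha_0})}$, while the paper stays multiplicative and exponentiates the critical equation directly, both arguments resting on the same combinatorial inputs ($\langle -\alpha_0,\hat\alpha\rangle\in\{0,1\}$ for $\alpha\in R_0^+\setminus\{-\alpha_0\}$, integrality of the pairing of $\hat\rho+\mu$ with $-\alpha_0$, and $W_0$-equivariance to reach all $v\in W_0$).
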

\begin{proof}
Since $T_j {\bf \imath_0}=t_j{\bf \imath_0}$ for $j=1,\ldots ,n$ (see Eq. \eqref{e}), we have that 
\begin{equation}\label{W0-invariance}
\mathcal J T_j \mathcal J^{-1} \Phi_\xi=  \mathcal J T_j\phi_\xi=t_j \mathcal J \phi_\xi=t_j  \Phi_\xi .
\end{equation}
Hence, by Lemma \ref{WRinv}, we only need to prove that $\mathcal J T_0\mathcal J^{-1}\Phi_\xi=t_0\Phi_\xi$, or equivalently $T_0\phi_\xi=t_0\phi_\xi$.
For $\xi\in V_{\text{reg}}$, the decomposition in Eq. \eqref{cf-decomposition} together with the explicit action of $T_0$ on $e^{i\xi}$ (cf. Eqs. \eqref{Ia}--\eqref{Ij}) gives us that
\begin{align*}
T_0 \phi_\xi =
 \sum_{v\in W_0} \frac{t_0-1}{1-e^{ i\langle v\xi,{\alpha_0}\rangle}} C(v\xi)e^{iv\xi}   
   + \sum_{v\in W_0} \frac{1-t_0 e^{i\langle v\xi,-{\alpha_0}\rangle} }{1-e^{i\langle v\xi,-{\alpha_0}\rangle}} C(s'_0v\xi) e^{ ic\langle  v\xi,{\alpha_0}\rangle}  e^{iv\xi}.
\end{align*}
Now, by comparing  with the corresponding decomposition of $t_0\phi_\xi$, we have that $T_0\phi_\xi=t_0\phi_\xi$ if for $\xi\in V_{reg}$
\begin{equation*}
e^{i c\langle v\xi,-{\alpha_0}\rangle} =
\frac{C(s'_{0}v\xi)}{C(v\xi)}
\frac{1-t_0e^{i\langle v\xi,-{\alpha_0}\rangle }}{t_0-e^{i\langle v\xi,-{\alpha_0}\rangle } },
\qquad \forall v\in W_0.
\end{equation*}
By substituting the product expansion for $C(\cdot) $ over $R_0^+$ (cf. Eq. \eqref{cfun}) we have
\begin{align*}
\frac{C(s'_0\xi)}{C(\xi)}&= 
\prod_{\substack{\alpha\in R^+_0\\ \langle  -{\alpha_0},\alpha^\vee\rangle>0  }}
\frac{1-t_{\alpha} e^{i\langle \xi,\alpha \rangle }}{t_{\alpha}-e^{i\langle \xi,\alpha \rangle } } \\
&=\frac{1-t_0 e^{i\langle \xi,-{\alpha_0}\rangle }}{t_0-e^{i\langle \xi,-{\alpha_0}\rangle } }
\prod_{\alpha\in R^+_0\setminus\{ -\alpha_0\}}
\left(\frac{1-t_{\alpha} e^{i\langle \xi,\alpha \rangle }}{t_{\alpha}-e^{i\langle \xi,\alpha \rangle } }
\right)^{\langle -{\alpha_0},\hat\alpha \rangle},
\end{align*}
where we used that $\langle -{\alpha_0},\hat\alpha \rangle \in\{0,1\}$ for all $\alpha\in R_0^+\setminus \{-\alpha_0\}$. The relation now may be written as
\begin{equation}\label{BAE}
 e^{ic\langle \xi,-v\alpha_0\rangle}
=\prod_{\alpha\in R^+_0}  \left( \frac{1-t_{\alpha}e^{i\langle \xi,\alpha \rangle }}{e^{i\langle \xi,\alpha \rangle} -t_{\alpha} }
\right)^{{\langle -v\alpha_0, \hat\alpha\rangle} },\qquad \forall v\in {W_0} ,
\end{equation}
by using that an overall flip of the signs in the factors at the right-hand side   cancels out because $\prod_{\alpha\in \hat R^+_0}  (-1)^{{\langle \beta^\vee, \alpha\rangle} }=(-1)^{\langle\beta^\vee,2 \hat \rho\rangle}=1$
for all $\beta\in {\hat R_0}$.

To finish this proof, let us observe that if we multiply Eq. \eqref{critical-beta:eq} by the imaginary unit and exponentiate both sides, using that 
$$
v_\alpha(\text{x})=2\arctan\Bigl(\frac{1+t_{\alpha}}{1-t_{\alpha}}\tan \left(\frac{\text{x}}{2}\right)\Bigr)=
i\log\Bigl(\frac{1-t_{\alpha} e^{i\text{x}}}{e^{i\text{x}}-t_{\alpha}}\Bigr),
$$ then it follows that $\xi_\mu$ is indeed a solution for Eq. \eqref{BAE}.
\end{proof}

\begin{remark}
In \cite[Eqn. (5.12a)]{die-ems:unitary} a Macdonald spherical function $\Phi_\xi \in \mathcal C(P)^{W_0}$ was introduced in terms of an intertwiner operator built  up (essentially) from an integral-reflection representation of the finite Hecke algebra $H_0$.  In contrast, the affine Macdonald spherical function  $\Phi_\xi \in \mathcal C(P)^{W_0}$~\eqref{msf} is based on the  affine intertwiner operator $\mathcal J$~\eqref{int-op}, built up  from the integral-reflection representation of the affine Hecke algebra $H$.
\end{remark}

\begin{remark}\label{rem:tau}

For $y\in V$, let us denote by $\tau_y:V\to V$ the translation determined by the action $\tau_y(x):=x + y$. Then the affine Weyl group admits the alternative presentation $W=W_0\ltimes \tau(c \hat Q^\vee)$  because  $ s_{\alpha^\vee}s_{\alpha^\vee+m_\alpha rc}=\tau_{c r  \hat\alpha^\vee}$ for $\alpha\in R_0$, $r\in \mathbb Z$.
 Because of the above proposition it follows that  $\Phi_{{\xi}_\mu}$ ($\mu \in \hat P_c$) is $W_0$-invariant and $c\hat Q^\vee$-periodic, explaining the name \emph{periodic} Macdonald spherical function for  $\Phi_{{\xi}_\mu}$. 
\end{remark}

\begin{remark}
From the proof of Proposition \ref{inv}, it is clear that for every $\mu\in \hat P$ the vector $\xi=\xi_\mu$ solves the following algebraic system of equations of Bethe type
\begin{equation*}
 e^{ic\langle \xi,\beta^\vee\rangle}
=\prod_{\alpha\in R^+_0}  \Bigl( \frac{1-t_{\alpha}e^{i\langle \xi,\alpha \rangle }}{e^{i\langle \xi,\alpha \rangle} -t_{\alpha}}
\Bigr)^{{\langle \hat\alpha,\beta^\vee\rangle} },\qquad \forall \beta\in {\hat R_0}.
\end{equation*}
Indeed, at $\xi=\xi_\mu$  Eq. \eqref{BAE} is satisfied and the short roots of $\hat R_0^\vee$ generate ${\hat R^\vee_0}$ over~$\mathbb{Z}$. 
\end{remark}

\section{Proof of Theorem \ref{R1} (Basis)}\label{sec7}
For any $\omega\in P^+$ we consider the \emph{free} operator $L_{\omega;1}:\mathcal{C}(P)\to\mathcal{C}(P)$ given by
\begin{equation}\label{free-laplacian}
(\mathrm L_{\omega;1}f)(\lambda):=\sum_{\nu\in W_0\omega} f(\lambda+\nu)
\end{equation}
 and the operator $L_\omega:\mathcal{C}(P)\to\mathcal{C}(P)$ given by
\begin{equation}\label{Lp}
L_\omega = \mathcal{J}  L_{\omega;1}  \mathcal{J}^{-1}  .
\end{equation}

For the isotropy group $W_{\lambda}$ of $\lambda\in {P_c}$ in $W$, 
Macdonald's product formula for the generalized Poincar\'e series of the Coxeter group associated with the length multiplicative function $t$ \cite{mac:poincare} tells us that  
\begin{align}\label{poincare-stab}
W_{\lambda}(t)& =  \sum_{w\in W_{\lambda}}t_w
 =  \prod_{\substack{\alpha\in {\hat R_0^+}\\ \langle \lambda,\alpha\rangle=0}}
\frac{1-t_{\alpha} \hat e_t (\alpha) }{1-\hat e_t(\alpha)}
\prod_{\substack{\alpha\in {\hat R_0^+}\\ \langle \lambda,\alpha\rangle=c  }}
\frac{1-t_{\alpha} \hat h_t \hat e_t (-\alpha)}{1-\hat h_t \hat e_t(-\alpha)},
\end{align} 
with $\hat h_t$ and $\hat e_t$ given by \eqref{htau}, \eqref{etau}.
Armed with this identity, we can readily infer that for any 
$\mu\in \hat P_c$ the function  $\Phi_{\xi_\mu}$ is nonzero in 
$\mathcal{C}({P_c})\cong\mathcal{C}(P)^{W}$. Indeed, from Proposition \ref{pw}, Lemma \ref{lemma},
Proposition \ref{inv} and the trivial action of $\mathcal{J}$ on
$\mathcal{C}({P_c})$, it follows that
\begin{equation}\label{Msf-evaluation}
\Phi_{\xi} (\lambda)=M_\lambda (\xi )=M^{(c)}_\lambda(\xi )\quad\text{for any}\ \xi\in\mathcal{P}_c \ \text{and}\ \lambda\in P_c.
\end{equation}
In particular, at $\lambda=0$ this yields that
\begin{align*}
\Phi_{\xi} (0)& = \sum_{v\in W_0} C(v\xi )=\sum_{v\in W_0} 
\prod_{\alpha\in R^+_0} 
 \frac{1-t_{\alpha} e^{-i\langle v\xi ,\alpha\rangle }}{1-e^{-i\langle v\xi ,\alpha\rangle}} \\
&\stackrel{\star}{=}
\sum_{v\in W_{0}}t_v 
=\prod_{\alpha\in R_0^+}
\frac{1-t_{\alpha}  e_t (\alpha) }{1-e_t (\alpha)}> 0 ,
\end{align*}
where we used Macdonald's identity from Ref. \cite[\text{Thm.} (2.8)]{mac:poincare} for the $\star$ equality.

\begin{proposition}[Completeness of the Periodic Macdonald Spherical Functions] \label{diagonal:thm}
The restriction of the functions $\Phi_{\xi_\mu}$, $\mu\in {\hat P}_{c}$, constitutes a basis for $\mathcal{C}({P_c})$ that diagonalizes the commuting operators $L_\omega$ simultaneously:
\begin{equation}\label{eigen}
L_\omega \Phi_{\xi_\mu} = \mathrm m_\omega(e^{i\xi_\mu }) \Phi_{\xi_\mu} \qquad (\omega \in P^+, \mu\in {\hat P}_{c}) .
\end{equation}

\end{proposition}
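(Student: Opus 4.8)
The plan is to deduce both assertions from the conjugation formula $L_\omega=\mathcal{J}\,L_{\omega;1}\,\mathcal{J}^{-1}$ in \eqref{Lp}, which transfers every question to the free operator $L_{\omega;1}$ \eqref{free-laplacian}. For the eigenvalue relation \eqref{eigen}, I would first note that $\Phi_{\xi_\mu}=\mathcal{J}\phi_{\xi_\mu}$ by \eqref{msf}, so that $L_\omega\Phi_{\xi_\mu}=\mathcal{J}\bigl(L_{\omega;1}\phi_{\xi_\mu}\bigr)$ and it suffices to diagonalize $L_{\omega;1}$ on $\phi_{\xi_\mu}$. A one-line computation gives $L_{\omega;1}e^{i\eta}=\mathrm{m}_\omega(e^{i\eta})\,e^{i\eta}$ for every plane wave, with eigenvalue $\mathrm{m}_\omega(e^{i\eta})=\sum_{\nu\in W_0\omega}e^{i\langle\nu,\eta\rangle}$; since $W_0\omega$ is $W_0$-stable this eigenvalue is $W_0$-invariant, i.e. $\mathrm{m}_\omega(e^{iw\xi_\mu})=\mathrm{m}_\omega(e^{i\xi_\mu})$ for all $w\in W_0$. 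Inserting the plane-wave decomposition $\phi_{\xi_\mu}=\sum_{w\in W_0}C(w\xi_\mu)e^{iw\xi_\mu}$ of Proposition~\ref{pw} (applicable because $\xi_\mu\in A\subset V_{\text{reg}}$ by Lemma~\ref{lemma}) then yields $L_{\omega;1}\phi_{\xi_\mu}=\mathrm{m}_\omega(e^{i\xi_\mu})\phi_{\xi_\mu}$, and applying $\mathcal{J}$ gives \eqref{eigen}. The asserted commutativity of the $L_\omega$ descends from that of the $L_{\omega;1}$, which is immediate from $(L_{\omega;1}L_{\omega';1}f)(\lambda)=\sum_{\nu\in W_0\omega,\,\nu'\in W_0\omega'}f(\lambda+\nu+\nu')$ being symmetric in $\omega,\omega'$.

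To show the $\Phi_{\xi_\mu}$ ($\mu\in\hat P_c$) are linearly independent, I would pass through $\mathcal{J}$ once more. A relation $\sum_\mu c_\mu\Phi_{\xi_\mu}=0$ on $P_c$ holds on all of $P$ (each $\Phi_{\xi_\mu}$ lies in $\mathcal{C}(P)^{W}$ and $P_c$ is a fundamental domain), and invertibility of $\mathcal{J}$ (Proposition~\ref{intertwining:prp}) reduces it to $\sum_\mu c_\mu\phi_{\xi_\mu}=0$. Expanding each $\phi_{\xi_\mu}$ in plane waves turns this into a linear relation among the exponentials $e^{iw\xi_\mu}$. By Lemma~\ref{lemma} the $\xi_\mu$ are distinct points of the open alcove $A$, hence have trivial isotropy and lie in pairwise distinct orbits of $W_0\ltimes2\pi Q^\vee$, so the $w\xi_\mu$ ($w\in W_0$, $\mu\in\hat P_c$) are pairwise distinct modulo $2\pi Q^\vee$; since the lattice dual to $P$ is $Q^\vee$, the corresponding exponentials are linearly independent on $P$. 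Reading off the coefficient of $e^{i\xi_\mu}$, which equals $c_\mu C(\xi_\mu)$ with $C(\xi_\mu)\neq0$ for regular $\xi_\mu$ (each factor in \eqref{cfun} is finite and nonzero because $|t_\alpha|<1$), forces $c_\mu=0$ for all $\mu$. (Alternatively, independence follows from \eqref{eigen} once one verifies that the orbit sums $\mathrm{m}_\omega$ separate the distinct orbits of the $\xi_\mu$.)

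It remains to promote this independent family to a basis, i.e. to match dimensions. Restriction identifies $\mathcal{C}(P)^{W}$ with $\mathcal{C}(P_c)$ (as $P_c$ meets each $W$-orbit in $P$ exactly once), so $\dim\mathcal{C}(P_c)=|P_c|$, while the independent family has $|\hat P_c|$ members. The proof is therefore finished by the cardinality identity $|P_c|=|\hat P_c|$, which I expect to be the main obstacle: it is a combinatorial duality between the two fundamental alcoves $P_c\subset P$ \eqref{hpc} and $\hat P_c\subset\hat P$ \eqref{hpcc}, attached respectively to the affine root system $R$ and its dual. This identity is immediate in the simply-laced case, where $P_c=\hat P_c$, and is checked type-by-type otherwise. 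Granting it, the $|\hat P_c|$ independent functions $\Phi_{\xi_\mu}$ span the $|P_c|$-dimensional space $\mathcal{C}(P_c)$ and hence form a basis; the resulting invertibility of the square matrix $\bigl[M_\lambda(\xi_\mu)\bigr]_{\lambda\in P_c,\,\mu\in\hat P_c}$ then also delivers Theorem~\ref{R1}.
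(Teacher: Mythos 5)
Your proposal is correct, and its first half coincides with the paper's own proof: the eigenvalue relation \eqref{eigen} is obtained exactly as in the paper, by conjugating the free operator $L_{\omega;1}$ through $\mathcal{J}$ via \eqref{Lp} and using that each plane wave $e^{iv\xi}$ is an $L_{\omega;1}$-eigenfunction with the $W_0$-invariant eigenvalue $\mathrm{m}_\omega(e^{i\xi})$. Where you genuinely diverge is the linear independence step. The paper argues spectrally: it first checks (just before the proposition, via Macdonald's Poincar\'e-series identity) that $\Phi_{\xi_\mu}(0)=W_0(t)>0$, so the $\Phi_{\xi_\mu}$ are \emph{nonzero} eigenfunctions, and then invokes the fact that the orbit sums $\mathrm{m}_\omega(e^{i\xi})$, $\omega\in P^+$, separate the points of the alcove $A$, so that the distinct $\xi_\mu$ of Lemma \ref{lemma} yield distinct joint eigenvalues; this is precisely your parenthetical ``alternative''. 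Your main route instead extends a hypothetical relation from $P_c$ to $P$ by $W$-invariance (Proposition \ref{inv}), pulls it back through $\mathcal{J}^{-1}$ (Proposition \ref{intertwining:prp}), and reads off coefficients of linearly independent characters, using $C(\xi_\mu)\neq 0$. This is equally valid — the points $w\xi_\mu$ are indeed pairwise distinct modulo $2\pi Q^\vee$ by the trivial isotropy of points of the open alcove, and $|t_\alpha|<1$ makes each factor of \eqref{cfun} nonzero — and it buys you independence without the nonvanishing computation $\Phi_{\xi_\mu}(0)=W_0(t)$, at the cost of leaning on the invertibility of $\mathcal{J}$.

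The one place you fall short of the paper is the cardinality identity $|P_c|=|\hat P_c|$, which you correctly isolate as the remaining obstacle but then only assert, deferring to a type-by-type check. The check would succeed, but the paper closes this step with a short uniform argument that you could adopt verbatim: writing $\lambda=c_1\omega_1+\cdots+c_n\omega_n$ and $\mu=c_1\hat\omega_1+\cdots+c_n\hat\omega_n$, the sets $P_c$ and $\hat P_c$ are cut out by the single inequalities $c_1m_1+\cdots+c_nm_n\leq c$ and $c_1\hat m_1+\cdots+c_n\hat m_n\leq c$, where the $m_j$ (resp. $\hat m_j$) are the coefficients of the highest root $-\alpha_0^\vee$ of $\hat R_0$ (resp. of $\varphi$) in the simple basis $\alpha_1^\vee,\dots,\alpha_n^\vee$ of $R_0^\vee$ (resp. $\hat\alpha_1^\vee,\dots,\hat\alpha_n^\vee$ of $\hat R_0^\vee$); one then observes $\hat m_j=m_j$ when $\hat R_0=u_\varphi R_0$, while for $\hat R_0=R_0^\vee$ the $\hat m_j$ are a permutation of the $m_j$. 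Substituting this mark-comparison for your deferred verification makes your proof complete.
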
 
\begin{proof}
For any $\omega\in P^+$ the action of $L_{\omega;1}$ on a plane wave yields
\begin{equation*}
L_{\omega;1}  e^{iv\xi}=
{\mathrm m_\omega} (e^{i\xi}) e^{iv\xi}\qquad (v\in W_0).
\end{equation*}
Hence, given  $\mu \in \hat P_c$ it follows that at $\xi=\xi_\mu$:
\begin{eqnarray*}
L_\omega \Phi_\xi = \mathcal{J} L_{\omega;1} \mathcal{J}^{-1}\mathcal{J}\phi_\xi =\mathcal{J} L_{\omega;1}\phi_\xi =  {\mathrm m_\omega} (e^{i\xi}) \Phi_\xi .
\end{eqnarray*}
The upshot is that the nontrivial eigensolutions $\Phi_{\xi_\mu}$, $\mu\in \hat P_c$ in Eq. \eqref{eigen}
must be linearly independent in $\mathcal{C}(P_c)$, in view of Lemma \ref{lemma} and the well-known fact that
 the $W_0$-invariant trigonometric polynomials $m_\omega(e^{i\xi})$, $\omega\in P^+$,  separate the points of the fundamental alcove $A$.

To finish the proof, it suffices to verify that $\dim \mathcal{C}({P_c})=|P_c|=|{\hat P}_c|$, for this confirms that the eigenfunctions $\Phi_{\xi_\mu}$ ($\mu\in {\hat P}_{c}$) form a basis of $\mathcal{C}({P_c})$.
To this end we first observe that  $P_c$ consists of all nonnegative integral combinations  $c_1\omega_1+\dots+c_n\omega_n$ of the fundamental weights of $R_0$  satisfying  $c_1m_1+\dots+c_nm_n\leq c$, where the positive integers $m_1,\dots, m_n$ refer to the coefficients of the highest root  $-\alpha_0^\vee$ of $\hat R_0$ in the simple basis
 $\alpha^\vee_1,\dots, \alpha^\vee_n$ of $R_0^\vee$. Similarly, $\hat P_c$ consists of all nonnegative integral combinations  $c_1\hat\omega_1+\dots+c_n \hat \omega_n$ of the fundamental weights of $\hat R_0$  satisfying  $c_1\hat m_1+\dots+c_n \hat m_n\leq c$ and where the positive integers $\hat m_1,\dots, \hat m_n$ now refer to the coefficients of $\varphi$ in the simple basis
 $\hat \alpha_1^\vee,\dots, \hat\alpha_n^\vee$ of $\hat R_0^\vee$. If  $\hat R_0=u_\varphi R_0$ then clearly $\hat m_j = m_j$ for all $j$  and if $\hat R_0=R_0^\vee$ then the $\hat m_1,\dots, \hat m_n$ are a permutation of the $m_1,\dots, m_n$. So in both cases $|P_c|=|{\hat P}_c|$, which completes the proof of the proposition. 
\end{proof}

Proposition \ref{diagonal:thm} guarantees that the square matrix
$\left[M_\lambda^{(c)}(\xi_\mu)\right]_{\lambda\in P_c,\mu\in\hat{P}_c}$ is of full rank, which finishes the
proof of Theorem \ref{R1}.

\section{Proof of  Theorem \ref{R2} (Affine Pieri Rule)}\label{sec8}

For any function $f\in\mathcal{C}(P)^{W}$,  $\omega\in P^+$ (quasi)-minuscule and $\lambda\in P_c$ we have that
\begin{align*}
  (L_\omega f)(\lambda)  {=}& (\mathcal{J}   L_{\omega;1}  \mathcal{J}^{-1}f) (\lambda)
  {=}
(  L_{\omega;1}  \mathcal{J}^{-1}f)(\lambda)  \\
{=}&  \sum_{\nu\in W_0\omega} (  \mathcal{J}^{-1}f)(\lambda  + \nu)\\
{=}&
\sum_{\nu\in W_0\omega} t[\lambda + \nu] f(\lambda + \nu)+d_{\lambda,\nu}(1-t^{-1}_{\vartheta}) f(\lambda) .
\end{align*}
The last equality hinges on the following lemma (whose proof is delayed until subsection \ref{subsectionLemmaB}):

 \begin{lemma}\label{Iqm-action:lem}  For any $f\in\mathcal{C}(P)$, $\lambda\in {P_c}$ and
 $\nu\in P_\vartheta^\star:=\{w\eta \mid w\in W_0 \text{ and } \eta\in P \text{ is a minuscule or quasi-minuscule weight} \}$, one has that
  \begin{equation*}\label{Iqm-action:eqn}
( \mathcal J^{-1} f)(\lambda  + \nu ) =   t[\lambda + \nu]  f(\lambda + \nu)  
 + d_{\lambda ,\nu}(1-t^{-1}_{\vartheta})  f(\lambda) ,
 \end{equation*}
where $d_{\lambda ,\nu}$ is taken from  \eqref{c-coef}. 
 \end{lemma}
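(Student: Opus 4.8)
The plan is to compute $(\mathcal J^{-1}f)(\mu)$ directly at $\mu:=\lambda+\nu$, exploiting that $\mu$ sits very close to the alcove $A_c$ since $\lambda\in A_c\cap P$ and $\nu$ lies in the $W_0$-orbit of a (quasi-)minuscule weight. By Lemma~\ref{triangular:prp} the operator $\mathcal J$ is triangular for $\preceq$ with diagonal entry $t[\mu]^{-1}$, so $\mathcal J^{-1}$ is triangular with diagonal $t[\mu]$; thus the term $t[\mu]f(\mu)$ is automatic and everything reduces to showing that the only surviving off-diagonal contribution is a single $f(\lambda)$ with coefficient $d_{\lambda,\nu}(1-t_\vartheta^{-1})$. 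I would obtain this by inverting the finite triangular block of $\mathcal J$ supported on $\{\mu'\preceq\mu\}$, the computation being driven by the recursion implicit in the proof of Lemma~\ref{triangular:prp}, namely $(\mathcal Jg)(\mu)=t_j^{-1}(\mathcal J T_jg)(s_j\mu)$ whenever $a_j(\mu)<0$ (with $w_{s_j\mu}=w_\mu s_j$ shorter), combined with induction on $\ell(w_\mu)$.

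The geometric heart is the analysis of which walls $\mu=\lambda+\nu$ crosses. Since $\langle\lambda,\alpha^\vee\rangle\ge 0$ while $\langle\nu,\alpha^\vee\rangle\ge -2$, with the value $-2$ occurring only for $\nu=-\alpha$ with $\alpha$ short, every positive affine root satisfies $a(\mu)\ge -2$, and $\theta(\mu)=|\{a\in R^+\mid a(\mu)=-2\}|\le 1$; moreover $a(\mu)=-2$ forces $\langle\lambda,\alpha^\vee\rangle=0$ (finite case) or $=-c$ (affine $r=1$ case), so that $a(\lambda)=0$, i.e. $\lambda$ lies on the wall $V_a$ and $w_\mu\lambda=\lambda$. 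Using the explicit integral-reflection operators $I_j=t_js_j+(t_j-1)J_j$ from Proposition~\ref{integral-reflection:thm}, I would show that every straightening step of depth $1$ has an empty $J_j$-sum and hence introduces no new lower point (exactly the minuscule behaviour), while the unique depth-$2$ crossing (present iff $\theta(\mu)=1$) contributes a single intermediate lattice point, namely the midpoint of the current point and its reflection, which equals $\lambda$ because $\lambda\in V_a$ is fixed and lies between $\lambda+\nu$ and $\lambda-\nu$ along the root direction. For minuscule $\nu$ one has $\theta(\mu)=0$, $\mathcal J$ acts diagonally at $\mu$, and $(\mathcal J^{-1}f)(\mu)=t[\mu]f(\mu)$, in accordance with $d_{\lambda,\nu}=0$.

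It then remains to pin down the coefficient of $f(\lambda)$ in the quasi-minuscule case. The single depth-$2$ crossing produces the factor $(1-t_\vartheta^{-1})$ multiplying $f(\lambda)$, dressed by the monomial telescoped from the depth-$1$ simple factors; using the identity $e_t(\alpha_j)=t_{\alpha_j}$ for simple roots and its multiplicative propagation this monomial is identified with $e_t(-\nu)$. When the deep wall is an affine root ($r=1$) the reflection $s_0$ contributes in addition the affine factor $h_t=t_\vartheta\,e_t(-\alpha_0)$, handled through the ``new origin at $v_0$'' description of $s_0$ from the proof of Proposition~\ref{integral-reflection:thm}; whether this factor is present is recorded by the exponent $\mathrm{sign}(\langle\lambda,\hat\nu\rangle)\in\{0,\pm1\}$, which is $0$ precisely in the finite case $\langle\lambda,\hat\nu\rangle=0$. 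Collecting the factors yields the coefficient $e_t(-\nu)\,h_t^{\mathrm{sign}(\langle\lambda,\hat\nu\rangle)}(1-t_\vartheta^{-1})=d_{\lambda,\nu}(1-t_\vartheta^{-1})$, as claimed; one can sanity-check this against the length-one case $\nu=-\alpha_j$ (where it reads $(t_\vartheta-1)f(\lambda)$) and against a length-two orbit element such as $\nu=-\varphi$ in type $A_2$ (where it reads $(t^2-t)f(\lambda)$).

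The main obstacle I anticipate lies in the last two paragraphs: establishing rigorously that, beyond the diagonal term $t[\mu]f(\mu)$, every straightening step except the single depth-$2$ crossing cancels all off-diagonal contributions, so that only $f(\lambda)$ remains, and that the accumulated scalar is \emph{exactly} $e_t(-\nu)h_t^{\mathrm{sign}(\langle\lambda,\hat\nu\rangle)}$ rather than merely proportional to it. The cleanest route is an induction on $\ell(w_\mu)$ whose hypothesis carries both the diagonal weight and the precise monomial multiplying the single surviving $f(\lambda)$-term, and then checking that this monomial transforms correctly under each simple reflection; verifying the transformation for the affine reflection $s_0$, where the factor $h_t$ enters, is the most delicate root-system computation and is where I would expect to spend most of the effort.
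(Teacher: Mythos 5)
Your proposal is correct and follows essentially the same route as the paper: the paper rewrites the claim as $(T_{w_{\lambda+\nu}}f)((\lambda+\nu)_+)=f(\lambda+\nu)-d_{\lambda,\nu}(1-t_\vartheta^{-1})f(\lambda)$ and proves it by induction on $\ell(w_{\lambda+\nu})$, peeling off one simple reflection at a time exactly as in your triangularity recursion, with your depth-$1$/depth-$2$ wall analysis appearing as the cases $(A)$, $(B)$, $(C)$ of Lemma~\ref{theta:lem}. The delicate coefficient bookkeeping you anticipate is carried out there via the identities $d_{\lambda,s'_j\nu}=t_j^{-1}d_{\lambda,\nu}$ at depth-$1$ steps (including the affine case $j=0$, where $h_t$ enters through $e_t(s'_0\nu)=e_t(\nu)h_t^{-1}t_\vartheta$ and the shift of $\mathrm{sign}\langle\lambda,\hat\nu\rangle$) and $d_{\lambda,\nu}=t_\vartheta$ at the single depth-$2$ step, which is precisely the ``transformation of the monomial under each simple reflection'' that your plan defers.
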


Since $f$ is $W$-invariant, it follows that
\begin{equation}\label{Lomegatemp}
\begin{split}
(L_\omega f)(\lambda)=&
\left(\sum_{\substack{\nu\in W_0\omega\\ (\lambda+\nu)_+=\lambda }} \,  t[\lambda+\nu]  +
(1-t_{\vartheta}^{-1})\sum_{\nu\in W_0\omega  }  d_{\lambda,	\nu} \right)
\, f (\lambda)
\\
&+
 \sum_{\substack{\nu\in W_0\omega\\ \lambda+\nu\in {P_c}  }} 
\,{\sum_{\substack{\eta\in W_0\omega\\ (\lambda+\eta)_+=\lambda+\nu }}  t[\lambda+\eta]  }\,f (\lambda+\nu).
\end{split}
\end{equation}
The action of $L_\omega $ on $f$ is therefore of the form
$$
(L_\omega f)(\lambda)=
U_{\lambda,\omega}(t) f (\lambda)+ \sum_{\substack{\nu\in W_0\omega\\ \lambda+\nu\in {P_c}  }} V_{\lambda,\nu}(t) f (\lambda+\nu).
$$

The computation of the coefficients $U_{\lambda,\omega}(t)$ and $V_{\lambda,\nu}(t)$ hinges on
 the following lemma  (whose proof is relegated in turn to subsection
\ref{subsectionLemmaA}):
 \begin{lemma}\label{theta:lem}
  For $\lambda\in {P_c}$ and $\nu\in P_\vartheta^\star$, we are in either one of the following two situations:
    $i)$ When $(\lambda + \nu)_+=\lambda$, then $w_{\lambda + \nu}'\nu=\alpha_j$ for some $j\in\{0,\dots, n\}$ with $t_j=t_0$ and $\theta(\lambda + \nu)=0$.
  
 $ii)$ When $(\lambda + \nu)_+\neq \lambda$, then $w_{\lambda + \nu}\in W_{\lambda}$ and
$ \theta(\lambda + \nu)=1$ if  $\nu\in R^-_0\cap W_0\vartheta$ and $\langle \lambda, \hat \nu \rangle =0$,  or if $\nu\in R^+_0\cap W_0\vartheta $ 
and $\langle \lambda, \hat \nu \rangle =c$, while $ \theta(\lambda+\nu)=0$ otherwise.

 \end{lemma}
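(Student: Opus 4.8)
The plan is to exploit that $P_c = P \cap A_c$, so that any $\lambda \in P_c$ lies in the closed alcove $A_c$ and hence $a(\lambda) \ge 0$ for every $a \in R^+$ (a positive affine root is a nonnegative integral combination of $a_0,\dots,a_n$, each of which is nonnegative on $A_c$). The whole lemma then rests on the elementary perturbation estimate
\[
a(\lambda+\nu) = a(\lambda) + \langle \nu, a'\rangle \ge -2 \qquad (a \in R^+),
\]
together with the observation that, since $a' = \alpha^\vee$ and $\nu \in P_\vartheta^\star$, one has $\langle\nu,\alpha^\vee\rangle \in \{-2,\dots,2\}$, with the value $-2$ possible only for quasi-minuscule $\nu$ (a short root) and $\alpha = -\nu$, i.e. $a' = -\nu^\vee$. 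Thus every $a \in R[\lambda+\nu]$ has $a(\lambda) \in \{0,1\}$, and
\[
\theta(\lambda+\nu) = \bigl|\{a \in R^+ : a(\lambda) = 0,\ a' = -\nu^\vee\}\bigr|.
\]

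First I would settle $\theta$. For minuscule $\nu$ this set is empty, so $\theta(\lambda+\nu) = 0$. For quasi-minuscule $\nu$ the differential $a' = -\nu^\vee$ is forced and the level of $a$ is then pinned by $a(\lambda) = 0$, so there is at most one such root; inspecting when it is positive (namely $a = -\nu^\vee$ at level $0$ if $\nu \in R_0^-$, requiring $\langle\lambda,\hat\nu\rangle = 0$, and $a = -\nu^\vee + c$ at level $1$ if $\nu \in R_0^+$, requiring $\langle\lambda,\hat\nu\rangle = c$) produces exactly the two $\theta(\lambda+\nu)=1$ alternatives of $ii)$, with $\theta(\lambda+\nu)=0$ otherwise.

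Next I would organize the dichotomy around the presence of an \emph{outer wall}: a root $a \in R^+$ with $a(\lambda) = 1$ and $\langle\nu,a'\rangle = -2$ (again forcing $\nu$ quasi-minuscule, $a' = -\nu^\vee$). Because $s_a\lambda = \lambda - a(\lambda)\alpha = \lambda + \nu$, such an $a$ exists if and only if $\lambda + \nu = s_a\lambda$, which, as $\lambda$ is the representative of its $W$-orbit in $A_c$, is equivalent to $(\lambda+\nu)_+ = \lambda$. Hence case $i)$ is exactly the presence of an outer wall. In case $ii)$ (no outer wall; this includes every minuscule $\nu$) the estimate forces $R[\lambda+\nu] \subseteq \{a \in R^+ : a(\lambda) = 0\}$, which is the positive system of the standard parabolic subgroup $W_\lambda = \langle s_j : a_j(\lambda) = 0\rangle$; since $R(w_{\lambda+\nu}) = R[\lambda+\nu]$ sits inside this set, $w_{\lambda+\nu} \in W_\lambda$, and $\theta(\lambda+\nu)$ is read off from the previous paragraph. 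In case $i)$ one first gets $\theta(\lambda+\nu) = 0$: a $-2$ value would require a positive root with differential $-\nu^\vee$ and $a(\lambda) = 0$, while the outer wall already has that differential with $a(\lambda) = 1$; since two affine roots with equal differential have values at $\lambda$ differing by an integer multiple of $c$, and $c > 1$, the values $0$ and $1$ cannot both occur. Finally, as $w_{\lambda+\nu}$ is an affine isometry sending $\lambda+\nu$ to $\lambda$, one has $w_{\lambda+\nu}'\nu = \lambda - w_{\lambda+\nu}\lambda$, and I would use the minimality of $w_{\lambda+\nu}$ together with the outer-wall description to identify this vector as a simple root $\alpha_j$ and to read off its multiplicity.

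The hard part will be precisely this last point: passing from ``$w_{\lambda+\nu}'\nu$ is a short root'' (immediate, as $W_0$ preserves lengths) to ``$w_{\lambda+\nu}'\nu = \alpha_j$ is a \emph{simple} root'', and then verifying the stated equality $t_j = t_0$. This is where the two admissible pairs must be handled separately, since they fix $\alpha_0 = -\vartheta$ respectively $\alpha_0 = -\varphi$ and thereby the identification of the relevant node. By contrast the closed-alcove inequality, the $\theta$-count, and the level/parity bookkeeping are routine.
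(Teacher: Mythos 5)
Your counting framework is sound and genuinely different from the paper's proof: where you bound $a(\lambda+\nu)=a(\lambda)+\langle \nu,a'\rangle$ directly and read off $\theta(\lambda+\nu)$ and the structure of $R[\lambda+\nu]$ in one stroke, the paper instead factors $w_{\lambda+\nu}$ into simple reflections and tracks, step by step along a reduced expression, how $\theta$ and the vector $\nu$ transform (its cases $(A)$, $(B)$, $(C)$, with $\theta$ governed by the recursion \eqref{theta-rec}). Your treatment of the $\theta$-count, of the dichotomy between ``outer wall exists'' and $(\lambda+\nu)_+\neq\lambda$, of the inclusion $w_{\lambda+\nu}\in W_{\lambda}$ in case $ii)$, and of $\theta(\lambda+\nu)=0$ in case $i)$ (your observation that two affine roots with the same differential take values at $\lambda$ congruent modulo $m_\nu c$, with $c>1$, is exactly the argument the paper uses at the end of its proof) is correct, and for these parts your route is arguably cleaner than the paper's induction.

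However, there is a genuine gap, and you have located it yourself: the claims of case $i)$ that $w_{\lambda+\nu}'\nu=\alpha_j$ is a \emph{simple} root and that $t_j=t_0$ are never proved; your proposal ends with a plan (``use the minimality of $w_{\lambda+\nu}$ together with the outer-wall description'') rather than an argument, and the plan does not obviously work. From the outer wall $a^*$ (with $a^*(\lambda)=1$ and $(a^*)'=-\nu^\vee$) one gets $s_{a^*}(\lambda)=\lambda+\nu$, hence $w_{\lambda+\nu}s_{a^*}\in W_{\lambda}$, i.e. $w_{\lambda+\nu}=u\,s_{a^*}$ with $u\in W_{\lambda}$, and therefore $w_{\lambda+\nu}'\nu=-u'\nu$; this is a short root (it lies in $W_0\vartheta$), but nothing in your setup forces it to be simple, and since $s_{a^*}$ is in general not a simple reflection, ``minimality'' gives you no leverage on this factorization. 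The paper obtains simplicity for free precisely from the inductive structure you bypassed: writing $w_{\lambda+\nu}=s_{j_\ell}\cdots s_{j_1}$ reduced, it shows that every step is of type $(A)$ or $(B)$ except the last, which in situation \eqref{part-i} must be of type $(C)$, i.e. $\nu_{\ell-1}=-\alpha_{j_\ell}$; then $w_{\lambda+\nu}'\nu=s_{j_\ell}'\nu_{\ell-1}=s_{j_\ell}'(-\alpha_{j_\ell})=\alpha_{j_\ell}$ is simple, and the parameter identity follows because $\alpha_{j_\ell}\in W_0\vartheta$ forces $t_{j_\ell}=t_\vartheta$. So to close case $i)$ you will essentially have to run the paper's chain argument (or an equivalent induction peeling simple reflections off $w_{\lambda+\nu}$ one at a time); the direct root-counting that served you well for $\theta$ does not by itself produce the simple root, and without it part $i)$ of the lemma remains unproved.
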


Indeed, the asserted expression  for
$U_{\lambda,\omega}$ in  \eqref{Ulambdaomega} is  immediate from
Eq.  \eqref{Lomegatemp} and the lemma, while
 the coefficient $V_{\lambda,\nu}(t)$ of $f(\lambda + \nu)$ in Eq. \eqref{Vlambdanu} is  retrieved after a
 short computation:
\begin{equation*}
\sum_{\substack{\eta\in W_0\omega\\ (\lambda + \eta)_+=\lambda + \nu }}
t[\lambda + \eta]
=\sum_{\mu\in W_{\lambda}(\lambda + \nu)}
t[\mu]
=
{W_{\lambda}(t)}/{(W_{\lambda}\cap W_{\lambda+\nu})(t)}
=
V_{\lambda,\nu}(t),
\end{equation*}
where in the last step Macdonald's product formula \eqref{poincare-stab} was used.

Upon combining with Eq. \eqref{eigen}  and recalling   that  for $\lambda\in P_c$ and
$\mu\in \hat P_c$: $\Phi_{\xi_\mu}(\lambda)=M_\lambda(\xi_\mu)=M^c_\lambda(\xi_\mu)$ (cf. Eq. \eqref{Msf-evaluation}), the Pieri formula in Eq. \eqref{Lomega-sym} readily follows.

\subsection{Proof of Lemma \ref{theta:lem}}\label{subsectionLemmaA}
Let $\mu\in P\setminus {P_c}$ and $j\in \{ 0,\ldots ,n\}$ such that $a_j\in R[\mu]$ (recall \eqref{e:Slambda}). Then
 $w_\mu =w_{s_j\mu}s_j$ with $\ell( w_\mu )= \ell (w_{s_j\mu})+1$, and thus
 $R[\mu]=s_j R[s_j\mu]\cup \{a_j\}$ (cf. \cite[(2.2.4)]{mac:affine}, although it is only stated for non-twisted types, it is actually true also for twisted types).
 From \eqref{e:theta} it follows that
 \begin{equation}  \label{theta-rec}
   \theta(\mu)=
   \begin{cases}
   \theta(s_j\mu)+1& \text{if $a_j(\mu)=-2$},\\
   \theta(s_j\mu) & \text{if $a_j(\mu)\neq-2$}.
   \end{cases}
 \end{equation}
Let us consider the situation $\lambda\in {P_c}$, $\nu\in P_\vartheta^\star$ and
 $\lambda + \nu\not\in {P_c}$. If for any $j\in \{ 0,\ldots ,n\}$ such that $a_j\in R[{\lambda+\nu}]$ we define 
 $\tilde{\nu}:=s_j(\lambda + \nu)- \lambda$ (having  $s_j (\lambda +\nu)=\lambda+\tilde{\nu}$), then we are in one of the following cases and thanks to  \eqref{theta-rec} we have the corresponding expressions for $\theta(\lambda+\nu)$ 
 \begin{itemize}
 \item[$(A)$]$a_j(\lambda)=0$ and $ \langle \nu, \alpha^\vee_j\rangle =-1$ (so $a_j(\lambda+\nu)=-1$). Then
 $s_j\in W_{\lambda}$, so
  $\tilde{\nu}=s'_j\nu$ and  $\theta(\lambda+\nu){=}\theta(\lambda+s'_j\nu)$.
 \item[$(B)$] $a_j(\lambda)=0$ and $\langle \nu,\alpha^\vee_j\rangle =-2$ (so $a_j(\lambda + \nu)=-2$). Then
 $s_j\in W_{\lambda}$ and $\nu=-\alpha_j$,
 so $\tilde{\nu}=s'_j\nu=\alpha_j$ and $\theta(\lambda+\nu){=}\theta(\lambda+s'_j\nu)+1$.
 \item[$(C)$] $a_j(\lambda)=1$ and $\langle\nu,\alpha_j^\vee\rangle=-2$  (so $a_j(\lambda + \nu)=-1$).  Then
 $\nu=-\alpha_j$ and $\tilde{\nu}=0$, so
 $w_{\lambda + \nu}=s_j$ and $\theta(\lambda + \nu){=}\theta (\lambda)=0$.
 \end{itemize}
Cases $(B)$ and $(C)$ only occur when $\nu\in W_0\vartheta$.
 If furthermore $j=0$, then also $\alpha_0= -\vartheta$ (so we are necessarily in the untwisted case $\hat R_0=R_0^\vee$).

When $\lambda + \nu\in {P_c}$ the lemma is trivial. Let
 $\lambda +\nu\not\in {P_c}$ and  a decomposition
 $w_{\lambda + \nu}=s_{j_\ell}\cdots s_{j_1}$ with $\ell =\ell (w_{\lambda+\nu})\geq 1$, 
 we define  $\nu_0:=\nu$,   $\nu_k:= s'_{j_{k}}\nu_{k-1}$ (for $k=1,\ldots ,\ell$), $b_0:=a_{j_1}$ and  $b_k=\beta_k^\vee+ r_k {m_\beta}c:= s_{j_1}\cdots s_{j_{k}}a_{j_{k+1}}$ (for $k=1,\ldots ,\ell-1$).
 This means that $R[ {\lambda+\nu}]=\{ b_0,\ldots ,b_{\ell-1}\}$ (cf. \cite[(2.2.9)]{mac:affine}).
By considering the three aforementioned possible cases we have 
\begin{equation*}\label{chain0}
 \lambda + \nu=\lambda + \nu_0\stackrel{s_{j_1}}{\longrightarrow}\lambda + \nu_1\stackrel{s_{j_2}}{\longrightarrow}\cdots  \stackrel{s_{j_{\ell-1}}}{\longrightarrow}
 \lambda + \nu_{\ell-1},
 \end{equation*}
where all these steps involve only $(A)$ or $(B)$, while $(C)$ can only occur at the final step
\begin{flalign}\label{part-i}
&&  \lambda + \nu_{\ell-1} &\stackrel{s_{j_\ell}}{\longrightarrow} \lambda 
 =(\lambda  + \nu)_+ , &&\\
 \noalign{\noindent\text{or does not occur at all}}
\label{part-ii}
 & & \lambda + \nu_{\ell-1} &\stackrel{s_{j_\ell}}{\longrightarrow} \lambda + \nu_\ell =(\lambda + \nu)_+,&&
 \end{flalign}
In situation  \eqref{part-i}  we have that
  $s_{j_\ell}w_{\lambda + \nu}=s_{j_{\ell-1}}\cdots s_{j_1}\in W_{\lambda}$,
  $(\lambda + \nu)_+=\lambda$, and $t_{j_\ell}=t_0$.
 Even more, we have 
 $s^\prime_{j_{\ell}}w'_{\lambda + \nu}\nu=\nu_{\ell -1}=-\alpha_{j_\ell}$, which implies
  $-\nu =(s'_{j_{\ell}}w'_{\lambda + \nu})^{-1}\alpha_{j_\ell}=(s'_{j_1}\cdots s'_{j_{\ell-1}}\alpha_{j_\ell})=\beta_{\ell-1}$, thus
 \begin{multline*}
- \langle\lambda,\nu^\vee\rangle+r_{\ell -1}{m_\nu}c =b_{\ell-1}(\lambda)=((s_{j_{\ell}}w_{\lambda+\nu})^{-1}a_{j_\ell})(\lambda)
 \\=a_{j_\ell}(s_{j_{\ell-1}}\cdots s_{j_1}\lambda)=a_{j_{\ell}}(\lambda)=1,
 \end{multline*}
hence
 \begin{align}\label{nuvee-1}
- \nu^\vee+(1+ \langle \lambda,\nu^\vee\rangle)=&b_{\ell -1}\in R[ {\lambda+\nu}] 
 .
 \end{align}
 On the other hand, in situation  \eqref{part-ii}  we have that  $w_{\lambda + \nu}\in W_{\lambda}$ and $(\lambda + \nu)_+\neq \lambda $.

 In order to compute $\theta (\lambda + \nu )$ let us notice that it has to be the number of times that the case $(B)$ occurs in the steps above, since $\theta ((\lambda + \nu)_+)=0$. In other words,  the number of times that
	  $ \langle \nu_k,\alpha^\vee_{j_{k+1}} \rangle = -2$ for $k=0,\ldots ,\ell^\prime -1$, with 
$\ell^\prime=\ell -1$ in situation    \eqref{part-i}   and $\ell^\prime =\ell$ in situation   \eqref{part-ii}.
 Since for $k=0,\ldots ,\ell^\prime -1$: $$\langle \nu_k,\alpha^\vee_{j_{k+1}} \rangle = -2
 \Leftrightarrow\langle \nu ,\beta^\vee_k \rangle =-2\Leftrightarrow\nu=-\beta_k,$$ and
\begin{multline*}
\langle\lambda,\beta_{k}^\vee\rangle+m_{\beta_k}r_{k} c=b_{k}(\lambda)=(s_{j_1}\cdots s_{j_{k}}a_{j_{k+1}})(\lambda)\\= a_{j_{k+1}}(s_{j_{k}}\cdots s_{j_1}\lambda)=a_{j_{k+1}}(\lambda)=0,
\end{multline*}
then
 $$
 \beta_{k}^\vee-\langle\lambda,\beta_{k}^\vee\rangle =b_{k}\in R[ {\lambda + \nu}].
 $$
Hence, $\theta (\lambda + \nu)$ is equal to $1$ or $0$ when $-\nu^\vee  +\langle\lambda,\nu^\vee\rangle\in R[ {\lambda + \nu}]$ or
 $-\nu^\vee + \langle\lambda,\nu^\vee\rangle\not\in R[ {\lambda + \nu}]$, respectively. 
 In situation   \eqref{part-i}  we have that $\theta(\lambda + \nu)=0$,
 because by Eq. \eqref{nuvee-1} we have $\langle \lambda,\nu^\vee\rangle + 1\in m_\nu c\mathbb{Z}$ and if  $-\nu^\vee+\langle\lambda,\nu^\vee\rangle\in  R[ {\lambda + \nu}] \subset R$ then 
 $\langle \lambda,\nu^\vee\rangle\in m_\nu c\mathbb{Z}$, this would  contradict that $c>1$.

For $\theta(\lambda  + \nu)=1$ we have $-\nu^\vee + \langle \lambda,\nu^\vee\rangle \in R^+$, and therefore $\nu\in R_0\cap P_\vartheta^\star = W_0\vartheta$ and $\langle \lambda,\nu^\vee\rangle \in m_\vartheta c\mathbb{Z}$. 
On the other hand we have that 
$|\langle \lambda,\nu^\vee\rangle |  = |\langle  \lambda, m_\nu \hat\nu\rangle |   \leq m_\vartheta c$ for all $\lambda\in {P_c}$, 
proving 
$
 m_\vartheta \langle \lambda ,\hat \nu\rangle =
 \langle \lambda ,\nu^\vee\rangle\in \{m_\vartheta c,0
 \} $
 if   $\theta(\lambda +\nu)>0$, and this concludes  the proof of the lemma. 


\subsection{Proof of Lemma \ref{Iqm-action:lem} }\label{subsectionLemmaB}
It will be more useful to use the following  reformulation:
  \begin{equation*}\label{Iqm-action:eqn}
 ( T_{w_{\lambda + \nu}} f)((\lambda + \nu)_+)
=  f(\lambda + \nu)   - d_{\lambda ,\nu}(1-t^{-1}_{\vartheta})  f(\lambda) .
 \end{equation*}

For $f\in C(P)$, $\mu\in P$, $j=0,\ldots, n$ with $0\le a_j(\mu)\le 2$ the action of $T_j$ is given explicitly by
 \begin{equation}\label{Ijact}
 (T_jf)(\mu )
  =
  \begin{cases}
  t_j f(\mu )&\text{if}\ a_j(\mu) =0 \\
  f(s_j\mu ) =f(\mu-\alpha_j) &\text{if}\ a_j(\mu) =1 \\
f(\mu-2\alpha_j) -(t_j-1)f(\mu-\alpha_j) &\text{if}\ a_j(\mu) = 2
 \end{cases}.
 \end{equation}
 Now we proceed by induction on $\ell (w_{\lambda + \nu})$. For  $\lambda + \nu\in {P_c}$ the result is trivial. Let assume that $\ell (w_{\lambda+\nu})>1$ 
  and $s_j$ ($0\leq j\leq n$) such that $a_j\in R[{\lambda + \nu}]$, then 
 $\ell (w_{\lambda + \nu}s_j)=\ell (w_{\lambda + \nu})-1$.
 By  the observations made at the beginning of  Section \ref{subsectionLemmaA}we have that 
 $w_{\lambda + \nu}s_j=w_{s_j(\lambda + \nu)}$  with either
 $s_j(\lambda +\nu)=\lambda + s'_j\nu$ (cases $(A)$ and $(B)$) or
 $s_j(\lambda + \nu)=\lambda (\in {P_c})$  (case $(C)$). 
 In the case $(C)$ we have $w_{\lambda + \nu}=s_j$ and the statement to prove is just the case $a_j(\mu)=1$ of  Eq. \eqref{Ijact} with $\mu=\lambda$.
 Furthermore, for the cases $(A)$ and $(B)$ we have
 \begin{align*}
 (T_{w_{\lambda + \nu}} f)((\lambda + \nu)_+)&=
(T_{w_{\lambda + s'_j\nu}} T_jf)((\lambda  + s'_j\nu)_+)  \\  &=
(T_j f) (\lambda + s'_j\nu) -  d_{\lambda,s'_j\nu} (1-t_{\vartheta}^{-1})(T_jf)(\lambda)
 \end{align*}
by the induction hypothesis and the fact $(\lambda  + s'_j\nu)_+=(\lambda + \nu)_+$.

 If we are in the case  $(A)$ then we have $(T_j f) (\lambda+s'_j\nu) =f(\lambda +\nu)$  and $(T_jf)(\lambda )=t_j f(\lambda )$ by the 
 situations $a_j(\mu) =1$ and $a_j(\mu) =0$ of Eq. \eqref{Ijact}, respectively. 
 This finishes the induction step since 
 $d_{\lambda ,s_j^\prime\nu}=d_{\lambda ,\nu}t_j^{-1} $. For $\nu\in P_\vartheta^\star\backslash W_0\vartheta$ this follows from $d_{\lambda ,s_j^\prime\nu}=d_{\lambda ,\nu}=0$,  while for $\nu\in W_0\vartheta$ it follows from   $\theta (\lambda+s'_j\nu)=\theta (\lambda+\nu)$ and that for $j>0$ we have $e_t(s_j\nu)=e_t(\nu)t_j$
 and $\langle \lambda,s_j  \hat\nu \rangle=
 \langle s_j\lambda,\hat\nu \rangle=\langle \lambda, \hat\nu \rangle$, while on the other hand, for $j=0$ we have
 $e_t(s'_0\nu)=e_t(\nu+\alpha_0)=e_t(\nu) h_t^{-1} t_{\vartheta}$ 
 and  (if $\theta(\lambda+\nu)>0$)  also $\langle \lambda,s'_0 \hat\nu \rangle=
 \langle s'_0\lambda, \hat\nu \rangle=\langle \lambda+c\alpha_0,\hat\nu \rangle =
 \langle \lambda, \hat\nu \rangle+c\langle \alpha_0,\hat\nu \rangle=\langle \lambda, \hat\nu \rangle -  c$
 and therefore
 $\text{sign} (\langle \lambda,s_0^\prime \hat \nu\rangle)=\text{sign} (\langle \lambda, \hat\nu\rangle)-1$ (cf. Lemma. \ref{theta:lem}).
 
  If we are in the case $(B)$ then we have that $\nu\in W_0\vartheta$, $t_j=t_{\vartheta}$ and 
  $(T_j f) (\lambda+s'_j\nu) =f(\lambda+\nu)-t_\vartheta(1-t_\vartheta^{-1})f(\lambda )$ because of the case $a_j(\mu) =2$ of Eq. \eqref{Ijact} for $\mu=\lambda+s'_j\nu$. We also have 
 $d_{\lambda,s'_j\nu}=0$ because $0\leq\theta (\lambda+s'_j\nu )<\theta (\lambda+\nu)\leq 1$. To complete the induction it remains to prove 
 that  $d_{\lambda ,\nu}=t_\vartheta$. 
For this observe that  $\theta (\lambda+\nu)=1$ and when $j>0$ we have
 $e_t(-\nu)=e_t(\alpha_j)=t_j=t_\vartheta$ (as $e_t (\alpha_j)=  e_t(s_j\alpha_j)t_j^{\langle \alpha_j,\alpha_j^\vee\rangle} 
 = e_t (-\alpha_j)t_j^2=t_j^2/e_t (\alpha_j)$) and $\langle \lambda ,\hat \nu\rangle=-\langle \lambda,\hat\alpha_j\rangle=0$, while for  $j=0$ we have $e_t (\nu)=e_t (-\alpha_0)=e_t (\vartheta)=h_t/t_\vartheta$ (since in this case $\alpha_0=-\vartheta$, cf. Lemma \ref{theta:lem}) and $\langle \lambda,\hat\nu\rangle=-\langle \lambda,\hat \alpha_0\rangle=-\langle \lambda, \alpha^\vee_0\rangle= -a_0(\lambda)+c=c>0$.

 \section{The structure constants revisited}\label{sec9}
The computation in the previous section produces the coefficients of the Pieri rule from the action of 
$L_\omega$ \eqref{Lp} in $\mathcal{C}(P_c)$. In principle the same strategy  can be followed to compute the structure constants $c^{\nu, (c)}_{\lambda ,\mu}(t)$ ($\lambda,\mu,\nu\in P_c$) more generally. To this end one starts with the monomial expansion of the Macdonald spherical function $M_\lambda (\xi)$, $\lambda\in P^+$:
\begin{equation}\label{m:exp}
M_\lambda (\xi)=\sum_{\mu\in P^+,\, \mu \leq \lambda}   n_{\lambda ,\mu } (t) m_\mu (e^{i \xi}) ,
\end{equation}
 where we have employed the dominance partial order on $P^+$: $\mu\leq\lambda$ iff $\lambda-\mu\in Q^+$.
With the aid of the expansion coefficients $n_{\lambda ,\mu}(t)$ one defines the following operator-valued Macdonald spherical function $M_\lambda (L):\mathcal{C}(P)\to\mathcal{C}(P)$ via the formula:
\begin{equation}\label{OM}
M_\lambda (L)=\sum_{\mu\in P^+,\, \mu \leq \lambda}   n_{\lambda ,\mu } (t) L_\mu 
\end{equation}
(cf. Eqs. \eqref{free-laplacian}, \eqref{Lp}).  For $\lambda\in P_c$, the operator-valued Macdonald spherical function  $M_\lambda (L)$ \eqref{OM} acts as a linear difference operator in the invariant subspace
$\mathcal{C}(P)^W\cong \mathcal{C}({P_c})$ with coefficients  given by the structure constants $\text{c}^{\nu, (c)}_{\lambda ,\mu}(t)$ ($\mu,\nu\in P_c$).

\begin{theorem}[Structure Constants]
For any $\lambda\in P_c$, the action of $M_\lambda(L)$ on  $f\in \mathcal{C}(P_c)$  is given by
\begin{equation}\label{sc}
(M_\lambda(L) f) (\mu) =  \sum_{\nu\in P_c} \text{c}^{\nu, (c)}_{\lambda ,\mu}(t) f(\nu) ,
\end{equation}
with  $ \text{c}^{\nu, (c)}_{\lambda ,\mu}(t)$ as defined in Eq. \eqref{LRcoef}.
\end{theorem}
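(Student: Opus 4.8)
The plan is to diagonalize the operator-valued Macdonald spherical function $M_\lambda(L)$ on the invariant subspace $\mathcal{C}(P)^{W}\cong\mathcal{C}(P_c)$ in the eigenbasis of periodic Macdonald spherical functions, and then to match the resulting eigenvalue against the defining relation \eqref{LRcoef} for the structure constants, evaluated at the nodes $\mathcal{P}_c$. The whole argument is a duality between the eigenbasis $\{\Phi_{\xi_\sigma}\}_{\sigma\in\hat P_c}$ of $\mathcal{C}(P_c)$ and the basis $\{M_\nu^{(c)}\}_{\nu\in P_c}$ of $\mathcal{C}(\mathcal{P}_c)$.

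First I would observe that each $\Phi_{\xi_\sigma}$, $\sigma\in\hat P_c$, is an eigenfunction of $M_\lambda(L)$ with eigenvalue $M_\lambda(\xi_\sigma)$. Indeed, combining the definition \eqref{OM} (whose summation index I rename to $\kappa$) with the simultaneous diagonalization \eqref{eigen} yields
\begin{equation*}
M_\lambda(L)\Phi_{\xi_\sigma}=\sum_{\kappa\in P^+,\,\kappa\leq\lambda}n_{\lambda,\kappa}(t)\,L_\kappa\Phi_{\xi_\sigma}=\Bigl(\sum_{\kappa\in P^+,\,\kappa\leq\lambda}n_{\lambda,\kappa}(t)\,{\mathrm m}_\kappa(e^{i\xi_\sigma})\Bigr)\Phi_{\xi_\sigma}=M_\lambda(\xi_\sigma)\,\Phi_{\xi_\sigma},
\end{equation*}
where the last equality is the monomial expansion \eqref{m:exp} evaluated at $\xi=\xi_\sigma$. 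Since the functions $\Phi_{\xi_\sigma}$ span $\mathcal{C}(P)^{W}$ by Proposition \ref{diagonal:thm}, this already shows that $M_\lambda(L)$ preserves the invariant subspace, so that its action on $\mathcal{C}(P_c)$ in Eq. \eqref{sc} is well defined.

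Because $\{\Phi_{\xi_\sigma}\}_{\sigma\in\hat P_c}$ is a basis of $\mathcal{C}(P_c)$, it suffices to verify Eq. \eqref{sc} for $f=\Phi_{\xi_\sigma}$. For the left-hand side the eigenvalue relation above gives $(M_\lambda(L)\Phi_{\xi_\sigma})(\mu)=M_\lambda(\xi_\sigma)\,\Phi_{\xi_\sigma}(\mu)$ for $\mu\in P_c$. For the right-hand side I would invoke $\Phi_{\xi_\sigma}(\nu)=M_\nu^{(c)}(\xi_\sigma)$ for $\nu\in P_c$ (Eq. \eqref{Msf-evaluation}), so that evaluating the defining product \eqref{LRcoef} at the node $\xi_\sigma\in\mathcal{P}_c$ produces
\begin{equation*}
\sum_{\nu\in P_c}\text{c}^{\nu,(c)}_{\lambda,\mu}(t)\,\Phi_{\xi_\sigma}(\nu)=\sum_{\nu\in P_c}\text{c}^{\nu,(c)}_{\lambda,\mu}(t)\,M_\nu^{(c)}(\xi_\sigma)=M_\lambda^{(c)}(\xi_\sigma)\,M_\mu^{(c)}(\xi_\sigma)=M_\lambda(\xi_\sigma)\,\Phi_{\xi_\sigma}(\mu),
\end{equation*}
where in the final step I used $M_\mu^{(c)}(\xi_\sigma)=M_\mu(\xi_\sigma)=\Phi_{\xi_\sigma}(\mu)$ once more. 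The two displays agree, so Eq. \eqref{sc} holds on every basis element $f=\Phi_{\xi_\sigma}$ and at every $\mu\in P_c$; the general case follows by linearity.

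I expect no serious analytic obstacle here. The step requiring the most care is the bookkeeping of the two distinct roles of the labels: the node $\xi_\sigma\in\mathcal{P}_c$ indexes the eigenfunction (equivalently, the evaluation functional on $\mathcal{C}(\mathcal{P}_c)$), while the weight $\mu\in P_c$ is the point at which the $W$-invariant function is evaluated. What makes both sides close up is the single identity that the eigenvalue $M_\lambda(\xi_\sigma)$ of $M_\lambda(L)$ coincides with the number obtained by evaluating the product $M_\lambda^{(c)}M_\mu^{(c)}$ at the same node $\xi_\sigma$---exactly the content of \eqref{LRcoef}---while the full rank of the transition matrix $\left[M_\nu(\xi_\sigma)\right]$ from Theorem \ref{R1} guarantees that the structure constants recovered in this way are unambiguous.
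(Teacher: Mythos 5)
Your proposal is correct and follows essentially the same route as the paper's own proof: both reduce by linearity to the eigenbasis $\Phi_{\xi_\sigma}$, $\sigma\in\hat P_c$, establish the eigenvalue relation $M_\lambda(L)\Phi_{\xi_\sigma}=M_\lambda(\xi_\sigma)\Phi_{\xi_\sigma}$ from Eqs.~\eqref{OM}, \eqref{eigen}, \eqref{m:exp}, and then close the argument by evaluating at $\mu\in P_c$ via Eq.~\eqref{Msf-evaluation} and the defining relation \eqref{LRcoef}. The only difference is presentational (you compute the two sides separately and match them, while the paper runs a single chain of equalities), plus your tidy renaming of the summation index to avoid clashing with the evaluation point $\mu$.
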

\begin{proof}
By linearity, it suffices to verify Eq. \eqref{sc} on the basis of Macdonald spherical functions $\Phi_\xi$, $\xi\in\mathcal{P}_c$. To this end we compute for $\xi\in\mathcal{P}_c$:
\begin{align*}
M_\lambda(L)\Phi_\xi  & \stackrel{\text{Eq}. \eqref{OM}}{=}
\sum_{\mu\in P^+,\, \mu \leq \lambda}   n_{\lambda ,\mu } (t) L_\mu \Phi_\xi  \stackrel{\text{Prop}. \ref{diagonal:thm}}{=}
\sum_{\mu\in P^+,\, \mu \leq \lambda}   n_{\lambda ,\mu } (t) m_\mu (e^{i \xi})  \Phi_\xi  \\
& \stackrel{\text{Eq}. \eqref{m:exp}}{=}
M_\lambda(\xi)\Phi_\xi  =  M^{(c)}_\lambda(\xi) \Phi_\xi  .
\end{align*}
Evaluation of this identity at $\mu\in P_c$  with the aid of Eq. \eqref{Msf-evaluation}
entails the desired formula for $f=\Phi_\xi$:
\begin{align*}
(M_\lambda(L)\Phi_\xi )(\mu)& = M^{(c)}_\lambda(\xi) \Phi_\xi (\mu)=
M^{(c)}_\lambda(\xi) M^{(c)}_\mu(\xi)=
\sum_{\nu\in P_c} \text{c}^{\nu, (c)}_{\lambda ,\mu}  M^{(c)}_\nu (\xi)\\
&=
\sum_{\nu\in P_c} \text{c}^{\nu, (c)}_{\lambda ,\mu}   \Phi_\xi (\nu) .
\end{align*}
\end{proof}

\vspace{3ex}
\noindent {\bf Acknowledgments.} We thank the referee for the helpful constructive remarks and suggesting some improvements concerning the presentation.

\bibliographystyle{amsplain}

\end{document}